\documentclass{amsart}
\usepackage{mathtools}
\usepackage{scalerel}
\usepackage[all]{xy}
\usepackage{amssymb}
\usepackage{amsmath}
\usepackage{cite}
\usepackage{fullpage}
\usepackage{mathrsfs}

\usepackage{url}
\usepackage{hyperref}
\numberwithin{equation}{subsection}
\numberwithin{figure}{subsection}
\usepackage{longtable}
\usepackage{tikz}
\usepackage{tikz-cd}
\usepackage{times}

\usepackage{setspace}

\newtheorem{theorem}{Theorem}[section]
\newtheorem{lemma}[theorem]{Lemma}
\newtheorem{proposition}[theorem]{Proposition}
\newtheorem{corollary}[theorem]{Corollary}
\theoremstyle{definition}
\newtheorem{definition}[theorem]{Definition}
\newtheorem*{remark*}{Remark}
\newtheorem{remark}[theorem]{Remark}
\newtheorem*{remarks*}{Remarks}
\newtheorem*{definition*}{Definition}
\newtheorem{example}{Example}[section]
\newtheorem{question*}{Question}[section]

\newcommand{\CC}{\mathbb{C}}

\newcommand{\ZZ}{\mathbb{Z}}

\newcommand{\PP}{\mathbb{P}}

\newcommand{\ga}{G^{\text{arith}}}
\newcommand{\gm}{G^{\text{geom}}}

\newcommand\restr[1]{\raisebox{-.5ex}{$|$}_{#1}}

\DeclareMathOperator{\Gal}{Gal}
\DeclareMathOperator{\sgn}{sgn}

\DeclareMathOperator{\Spec}{Spec}

\DeclareMathOperator{\im}{im}
\DeclareMathOperator{\id}{id}

\DeclareMathOperator{\Aut}{Aut}

\DeclareMathOperator{\Desc}{Desc}

\begin{document}
	
\title{Settled Elements in Arboreal Galois Groups of Quadratic PCF Polynomials}

\author{\"{O}zlem Ejder}
\address{Ko\c{c} University, College of Science}
\email{ozejder@ku.edu.tr}
\author{D\.ILBER Ko\c{c}ak }
\address{METU, Department of Mathematics}
\email{dkocak@metu.edu.tr}

\subjclass[2020]{37P05,37P15, 37P25,11R32, 20E08,}

\date{}

\keywords{Arboreal Galois groups, Iterated monodromy groups, self-similar groups, profinite groups, groups acting on trees}
\begin{abstract}

Let $f(x) \in K(x)$ be a quadratic polynomial where $K$ is a field of characteristic not equal to $2$. The associated arboreal Galois representation of the absolute Galois group of $K$ acts on a regular rooted binary tree. Boston and Jones conjectured that, for $f \in \mathbb{Z}[x]$, the image of this representation contains a dense set of settled elements. Roughly speaking, a cycle of an automorphism $\tau$ of the tree is called \emph{stable} if its length strictly increases at each subsequent level, and $\tau$ is called \emph{settled} if the proportion of vertices contained in stable cycles goes to $1$ as the level goes to infinity.

In this article, we prove that, when $K$ is finitely generated over its prime field, the arithmetic iterated monodromy groups of postcritically finite quadratic polynomials in $K[x]$ with periodic postcritical orbits are densely settled. In the number field case, by a result of Benedetto–Ghioca–Juul–Tucker (2025), it follows that for infinitely many $a \in K$, the associated arboreal Galois representations are densely settled. In particular, our results apply to the arithmetic IMG of the Basilica map $f(x)=x^2-1$.

\end{abstract}

\date{}
\maketitle
\section{Introduction}

Let $K$ be a field of characteristic not equal to $2$ and let $\bar{K}$ denote a separable closure of $K$. Let $f(x)$ be a rational function of degree two defined over $K$, and fix a point $a \in K$ such that for every $n \geq 1$ the polynomial $f^n(x)-a$ has exactly $2^n$ distinct roots. 
The set of all iterated preimages of $a$ under $f$ can be organized into a rooted binary tree $T$: vertices at level $n$ correspond to solutions of $f^n(x)=a$, and edges connect each vertex to its image under $f$. 
Passing to the inverse limit over all levels yields an infinite rooted binary tree equipped with a natural action of the absolute Galois group $
G_K := \Gal(\bar{K}/K)$, arising from its action on the roots of the iterates of $f$. 

This action defines a continuous homomorphism
\[
\rho_{f,a} \colon G_K \longrightarrow \Omega := \Aut(T),
\]
called the \emph{arboreal Galois representation} associated with $(f,a)$. Its image, denoted $G_{f,a}$, is referred to as the \emph{arboreal Galois group}. These groups capture arithmetic information about the fields generated by iterated preimages and have been studied from a variety of perspectives in arithmetic dynamics; see, for example, \cite{Stoll92, Odoni2, Odoni85, Jonessurvey, JM14, S16}. 
Concrete descriptions and explicit computations appear in \cite{BEK, Arborealcubic, EjderBelyi, EKO, arithmeticbasilica, BHWL17, JKLMTW}.

One approach to studying arboreal Galois groups is to analyze how their elements act on the tree $T$. In particular, one can study how the cycle structure of an automorphism changes at higher levels of the tree. Following Boston and Jones \cite[Definition~3.2]{BJ-dedicata}, \cite[p.~216]{BJ-quarterly}, a cycle of an element $\tau \in \Omega$ at level $n$ is called \emph{stable} if its length increases at every higher level. An automorphism $\tau$ is called \emph{settled} if the proportion of vertices contained in stable cycles at level $n$ approaches $1$ as $n \to \infty$ (see Definition~\ref{defn:settled}). This notion was originally motivated by questions about polynomial factorization modulo primes \cite[Conjecture~2.2]{BJ-pams} and is important for studying the arithmetic structure of arboreal images.

When $K$ is a number field, the cycle structure of a Frobenius element at an unramified prime $p$ records the factorization type of the iterates of $f$ modulo $p$, and Boston and Jones \cite{BJ-pams} studied such elements from this point of view, observing that they are expected to be settled. Since Frobenius elements are dense in $G_K$ by the Chebotarev density theorem, they conjectured that $\im(\rho_{f,a})$ contains a dense set of settled elements. We refer to \cite{BJ-pams, BJ-dedicata, BJ-quarterly} for more details on the motivation behind these definitions.


A subgroup of $\Omega$ is called \emph{densely settled} if it contains a dense set of settled elements. 
Boston and Jones \cite[pg.~30]{BJ-dedicata} conjectured that the arboreal Galois groups of quadratic polynomials are densely settled.

Settledness has been studied in the case of polynomials---particularly by Boston--Jones \cite{BJ-pams, BJ-dedicata, BJ-quarterly} and Cortez--Lukina \cite{CortezLukina}. In contrast, in the non-polynomial setting, the first author \cite{Ejder2025} showed that there exist many quadratic rational functions $f$ for which the groups $G_{f,a}$ do not contain \emph{any} settled elements.

We define the postcritical orbit $P$ of a polynomial $f$ by
\[
P := \{ f^n(c) \mid c \text{ is a critical point of } f,\ n \geq 1 \}.
\]
If the set $P$ is finite, then $f$ is called \emph{postcritically finite} (PCF). 

The group $G_{f,a}$ is contained in a self-similar subgroup called the \emph{arithmetic iterated monodromy group} (IMG) associated to $f$. 
Roughly speaking, this group can be described as a quotient of the étale fundamental group of $\PP_{K}^1 \setminus P$, and it captures both arithmetic and geometric information about the tower of covers of $\PP^1$ obtained by iterating $f$. 
The arithmetic IMG contains a subgroup known as the \emph{geometric} IMG; when $K$ is contained in $\CC$, it can be calculated as a quotient of the profinite completion of the topological fundamental group attached to $f$ over $\CC$. Precise definitions are given in Section~\ref{mon}. 


The article \cite{CortezLukina} studies this problem in a wider
perspective. They consider settled elements in the automorphism group of a
$d$-ary tree, see Definitions~1.1--1.3, so their framework applies to
polynomials of any degree $d \geq 2$. In particular, they show (Remark 6.11)
that some of the normalized Dynamical Belyi maps studied in \cite{BEK} have
densely settled IMG's. When $d=2$, they studied the settled elements in the (profinite) iterated monodromy groups 
(IMG) of quadratic PCF polynomials with strictly preperiodic postcritical orbit $P$. They established dense settledness in one specific configuration of the postcritical orbit and provided substantial evidence for the general strictly preperiodic case, and raised the problem of understanding the periodic case (see \cite[Problem~1.17]{CortezLukina}).

In this article, we resolve this problem completely by proving that, when $K$ is finitely generated over its prime field and $f(x) \in K[x]$ is a quadratic polynomial with a periodic postcritical orbit, its arithmetic IMG is densely settled. 


\begin{theorem}\label{intro:thm1}
Let $K$ be a field of characteristic not equal to $2$. Assume that $K$ is finitely generated over its prime field. Let $f(x) \in K[x]$ be a PCF quadratic polynomial with a periodic postcritical orbit. Then the arithmetic iterated monodromy group of $f(x)$ is densely settled.
\end{theorem}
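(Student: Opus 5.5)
We will work entirely inside the self-similar description of the arithmetic IMG. For a quadratic PCF polynomial whose critical point $0$ is periodic of exact period $r$, the geometric IMG $\gm$ is topologically generated by $r$ elements $a_1,\dots,a_r$ given by the standard kneading recursion: $a_1=(1\ 2)(1,a_r)$ and, for $i\ge 2$, $a_i=(a_{i-1},1)$ or $(1,a_{i-1})$, depending on the kneading sequence. This is the form of the Basilica recursion for $r=2$. The arithmetic group $\ga$ is contained in the normalizer of $\gm$ in $\Omega$. When $K$ is finitely generated over its prime field, the cyclotomic character has open image in $\ZZ_2^\times$, and $\ga/\gm$ is identified with an open subgroup of $\ZZ_2^\times$ via an explicit family of normalizing elements (Pink's description). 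It therefore suffices to exhibit, in every coset $\sigma\gm$ with $\sigma\in\ga$ and every level $n$, a settled element agreeing with a given $\tau\in\sigma\gm$ on level $n$. Since $\gm\trianglelefteq\ga$, this reduces to showing that every coset of the level-$n$ stabilizer inside $\ga$ contains a settled element.

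The main tool is a criterion for settledness phrased through sections. For $\tau\in\Omega$ and a vertex $v$ lying in a cycle of length $m$ at level $n$, consider the "cycle section" $\tau^m|_v$. The cycle through $v$ is stable exactly when, at every vertex $w$ below $v$, the successive cycle sections act nontrivially on the first level; equivalently, $\tau^m|_v$ acts on the subtree below $v$ as an odometer. Hence $\tau$ is settled if and only if, for all sufficiently large levels, the vertices lying in cycles whose cycle section is conjugate to an odometer-like element make up a proportion tending to $1$. We will use the sign homomorphisms $\sgn_k$, the parity of the action at level $k$. A cycle section $s$ is odometer-type if and only if $\sgn_k(s)$ is odd for all $k$, and in $\gm$ these parities are computable from the abelianization $\gm\to(\ZZ/2)^r$.

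The key steps, in order, are as follows. (1) Write the product of the generators $a_r a_{r-1}\cdots a_1$ (the "boundary" element for the periodic kneading), and check by the recursion that it is an odometer; this is a known feature of periodic kneading. (2) Show that for a generic element $g\in\gm$ the cycle sections that arise are, up to conjugacy, words whose image in $(\ZZ/2)^r$ is a fixed vector that forces odometer behaviour. The non-stable cycles are then those whose sections land in the "bad" subgroup generated by words with even exponent sum in $a_1$. (3) Prove a contraction estimate: below each non-stable cycle, at each subsequent level the proportion of descendants that remain non-stable drops by a uniform factor $c<1$. This follows because the bad sections, lifted $r$ levels further, split into pieces of which at least one is odometer-type, an argument by induction on word length using that $\gm$ is contracting. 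This gives density of settled elements in $\gm$. (4) Extend to $\ga$ by conjugating with the normalizing elements $\sigma_u$, $u\in\ZZ_2^\times$, which act like multiplication by $u$ on odometer coordinates. Since $\sigma_u$ maps odometers to odometers raised to a unit power, and unit powers of odometers remain odometers, the same counting works in each coset.

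The main obstacle is step (3): controlling non-stable cycles uniformly across cosets. The difficulty is that the periodic case lacks the preperiodic "eventually trivial" sections exploited by Cortez–Lukina, so bad sections may recur periodically along the cycle $a_1\to a_r\to\cdots$. To handle this, I would first try to show that the map sending a bad section to its sections after $r$ levels is, on the finite quotient by the abelianization, a linear map over $\ZZ/2$. Tracking the distribution of parity vectors then shows that a fixed positive fraction becomes odd at each period, which yields geometric decay and hence a settled proportion tending to $1$.
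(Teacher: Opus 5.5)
\textbf{There is a genuine gap, in steps (3) and (4), which carry the whole argument.}

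\emph{Step (4) cannot work.} Conjugation by $\sigma_u$ maps $\gm$ to itself. Since $\ga/\gm$ is abelian, it also maps every coset $\sigma\gm$ to itself. So conjugating settled elements of $\gm$ never produces elements of $\sigma_u\gm$ for $u\neq 1$.

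The elements of that coset are products $\sigma_u g$, and their cycles must be analysed afresh. A cycle section at a cycle of length $2^j$ lies in the coset of $u^{2^j}$, not in $\gm$ (compare Lemma~\ref{lem:cosetHzk}). So your parity test through $\gm\to(\ZZ/2)^r$ does not apply, and the signs of the normalizing element enter (Lemma~\ref{lem:signzk}).

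The behaviour genuinely depends on the coset. For $k\neq\pm1$, at levels $n\ge\nu+1$ the element $\gamma^m z_k$ has at most $2^{\nu+1}$ non-stable vertices, where $\nu=v_2((k^2-1)/4)$; this bound degenerates as $k\to\pm1$. Every $\gamma^m z_{-1}$ is an involution, hence not settled. Since $-1$ lies in the cyclotomic image for, say, $K=\mathbb{Q}$, your coset-by-coset reduction would require a dense set of settled elements inside $Gz_{-1}$. Nothing in your sketch provides this. (Incidentally, a section is of odometer type iff \emph{all} $r$ exponent sums are odd, not just that of $a_1$.)

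\emph{Step (3) is unsupported.} Density needs a settled element in every basic open set, so an argument about ``generic'' elements does not suffice without a proven measure-theoretic statement. Your uniform contraction fails for specific elements: $a_2=(a_1,\id)$ has the identity as a section, and nothing below it ever becomes stable. So an explicit construction is required, and your parity-vector heuristic does not give one.

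More importantly, you never need settled elements of $\gm$. It has infinite index, hence empty interior, in $\ga$. Moreover, in the periodic case $N(\gamma)\cap G=\langle\langle\gamma\rangle\rangle$, because $z_k\in G$ only for $k=1$, so the odometer normalizer gives no leverage inside $\gm$.

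\emph{The missing idea} is to work only in the cosets $Gz_k$ with $k\neq\pm1$:
\begin{enumerate}
\item There the elements $\gamma^m z_k$ are uniformly settled (Theorem~\ref{thm:classificationzk}).
\item Use them as the good descendants in the stable-block argument, then induct on $a$-length to show that every word in $a_i,\gamma^m,z_s$ with $s\neq\pm1$ is settled.
\item Reach elements of $\gm$ and of $Gz_{-1}$ only by approximation. Use $z_k\restr{T_n}=z_{k'}\restr{T_n}$ for odd integers $k'>1$ with $k'\equiv k \bmod 2^n$ (Lemma~\ref{lem:zkcongruence}), together with the openness of $\ga$ in $\mathscr{G}=GN(\gamma)$.
\end{enumerate}
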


The arithmetic IMGs of such maps were studied in detail by Pink \cite{Pinkpolyn}. A prominent example is the Basilica map $f(x) = x^2 - 1$, whose arithmetic IMG was explicitly described in \cite{arithmeticbasilica}.

We obtain the following corollary by \cite[Theorem~1.2]{BGJT2025s}. See also \cite[Theorem~1.5]{BGJT2025p} for a similar result.

\begin{corollary}
Let $K$ be a number field and let $f(x) \in K[x]$ be a PCF quadratic polynomial with a periodic postcritical orbit. For infinitely many $a \in K$, the associated arboreal Galois representation of $(f,a)$ is densely settled.
\end{corollary}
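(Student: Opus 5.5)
The corollary follows by combining Theorem~\ref{intro:thm1} with a surjectivity result for specializations. Everything is therefore reduced to showing that, for infinitely many $a\in K$, the image $G_{f,a}=\im(\rho_{f,a})$ equals the full arithmetic iterated monodromy group $\ga$ of $f$.

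The plan is as follows. Since $K$ is a number field, it is finitely generated over $\mathbb{Q}$, so Theorem~\ref{intro:thm1} applies. It shows that $\ga$, viewed as a closed subgroup of $\Omega=\Aut(T)$ via the identification of the tree of preimages of a generic point $t$ with $T$, contains a dense subset of settled elements. The definition of settledness refers only to the action of an element on $T$, so it is intrinsic to the subgroup of $\Omega$.

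Next I will fix $a\in K\setminus P$, so that each $f^n(x)-a$ is separable; this holds because $a$ is not a postcritical value. Specializing $t\mapsto a$ then gives an embedding $G_{f,a}\hookrightarrow\ga$, compatible with the actions on trees once the preimage trees of $a$ and of $t$ are identified. Any two such identifications differ by conjugation by an element of $\Omega$, and conjugation preserves both density and settledness.

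The main input is \cite[Theorem~1.2]{BGJT2025s}. For a PCF quadratic polynomial over a number field, in our periodic setting, it says that for infinitely many $a\in K$ the specialization map $G_{f,a}\to\ga$ is surjective, i.e.\ $G_{f,a}=\ga$. The naive route via Hilbert irreducibility is not enough here: it handles each finite level separately, and $\ga$ has infinitely many levels. The BGJT theorem gets around this by showing that surjectivity at a bounded level forces surjectivity at every level.

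For each such $a$, the groups $G_{f,a}$ and $\ga$ are then equal up to conjugation in $\Omega$. The dense set of settled elements supplied by Theorem~\ref{intro:thm1} therefore transfers to a dense set of settled elements of $G_{f,a}$, so $\rho_{f,a}$ is densely settled.

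The main step to check is that the hypotheses of \cite[Theorem~1.2]{BGJT2025s} match ours, namely a quadratic PCF polynomial with periodic critical orbit over a number field. I also need its conclusion to be equality with the arithmetic IMG exactly as defined in Section~\ref{mon}, including the same normalization of $\ga$. If that paper instead states the result with a different model of the arithmetic IMG, I will appeal to \cite[Theorem~1.5]{BGJT2025p}, or note that any two such models are conjugate in $\Omega$, which again preserves dense settledness.
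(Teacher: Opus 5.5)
Your proposal is correct and matches the paper's argument. The paper obtains the corollary in one line by combining Theorem~\ref{intro:thm1} (applicable since a number field is finitely generated over $\mathbb{Q}$) with \cite[Theorem~1.2]{BGJT2025s}, which gives $G_{f,a}=\ga$, up to conjugation in $\Omega$, for infinitely many $a\in K$. Your extra remarks---that the tree identifications are only determined up to conjugation, and that conjugation preserves both density and settledness (Proposition~\ref{prop:conjsettled})---are exactly the steps the paper leaves implicit.
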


Moreover, since a finite-index subgroup of a densely settled group is itself densely settled, we note that if \cite[Conjecture~2]{arithmeticbasilica} holds, then the arboreal Galois groups associated to $(x^2-1,a)$ 
 are densely settled for all but finitely many $a \in K$.

The geometric IMG of a PCF quadratic polynomial with a periodic postcritical orbit is determined up to conjugacy by Pink. See also the recent work of Adams and Hyde \cite{AdamsHyde2025}. Our approach is two-fold. First, we exploit the fact that the arithmetic IMG contains the normalizer of any odometer lying in the geometric IMG. We therefore first give an explicit description of settled elements inside the normalizer of an odometer.

More precisely, let $\gamma \in \Omega$ be defined recursively by $\gamma = (\gamma,\id)\sigma$. The normalizer of the topological group generated by $\gamma$ is isomorphic to $\ZZ_2 \rtimes \ZZ_2^{\times}$, and for each $k \in \ZZ_2^{\times}$, there is $z \in \Omega$ such that $z\gamma z^{-1}=\gamma^k$. 
Our first main result is the following (Corollary~\ref{zksummary}).

\begin{theorem}\label{intro:thm2}
Let $k \neq \pm 1 \in \ZZ_2^{\times}$ and let $z \in \Omega$ be such that $z\gamma z^{-1}=\gamma^k$. Let $\nu$ be the $2$-adic valuation of $(k^2-1)/4$. Then for any $n \geq \nu +1$, the number of vertices at level $n$ that do not lie in a stable cycle of $z$ is at most $2^{\nu +1}$.
\end{theorem}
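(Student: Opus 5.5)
The plan is to reduce everything to the dynamics of an affine map on $\ZZ/2^n$. First I fix the standard identification of level $n$ of $T$ with $\ZZ/2^n$ under which $\gamma=(\gamma,\id)\sigma$ acts as $x\mapsto x+1$; compatibly in $n$, this identifies the boundary of $T$ with $\ZZ_2$. The relation $z\gamma=\gamma^k z$ then reads $z(x+1)=z(x)+k$. Hence $z(x)=kx+b$ on every level, with $b=z(0)\in\ZZ_2$. So it suffices to understand the cycle structure of the affine map $x\mapsto kx+b$ on $\ZZ/2^n$ for all $n$.

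\textbf{Step 1: iterates.} Put $S_m=1+k+\dots+k^{m-1}$. Iterating gives $z^m(x)=k^mx+bS_m$, so
\[
z^m(x)-x=(k^m-1)x+bS_m=S_m\bigl((k-1)x+b\bigr).
\]
Set $c(x)=v_2((k-1)x+b)$, which lies in $\ZZ_{\ge 0}\cup\{\infty\}$. Then the length of the cycle of $x$ at level $n$ is the least $m$ with $v_2(S_m)+c(x)\ge n$.

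\textbf{Step 2: valuations of $S_m$.} Since $k$ is odd, $S_m$ is odd for $m$ odd. For $m=2^j$ with $j\ge1$, the lifting-the-exponent lemma for $p=2$ gives $v_2(S_{2^j})=v_2(k+1)+j-1$. Writing $m=2^j\cdot(\text{odd})$, we have $v_2(S_m)=v_2(S_{2^j})$. Therefore cycle lengths are powers of $2$, and the cycle of $x$ at level $n$ has length $2^{j(n)}$, where
\[
j(n)=\begin{cases}0, & c(x)\ge n,\\ \max\bigl(1,\;n-c(x)-v_2(k+1)+1\bigr), & \text{otherwise}.\end{cases}
\]
From this formula, the cycle of $x$ is stable at level $n$, meaning $j$ increases by exactly one at every subsequent level, precisely when $n-c(x)-v_2(k+1)+1\ge 1$. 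Equivalently,
\[
c(x)\le n-v_2(k+1).
\]
Otherwise the length is $1$ or sits at $2$ for at least one more level.

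\textbf{Step 3: counting.} The vertices not in stable cycles are the $x\in\ZZ/2^n$ with $(k-1)x\equiv -b \pmod{2^t}$, where $t=n-v_2(k+1)+1$. Let $a=v_2(k-1)$. If $t\ge a$, this congruence has at most $2^a$ solutions modulo $2^t$, so at most $2^{a}\cdot 2^{n-t}=2^{a+v_2(k+1)-1}$ solutions modulo $2^n$.

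Because $k$ is odd, one of $a$ and $v_2(k+1)$ equals $1$. Hence $a+v_2(k+1)=v_2(k^2-1)=\nu+2$, and the count is at most $2^{\nu+1}$. Here $k\neq\pm1$ ensures that $\nu$ is finite. The condition $t\ge a$ is equivalent to $n\ge a+v_2(k+1)-1=\nu+1$, which is exactly the hypothesis.

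The main point needing care is Step 2: getting the exact formula for the cycle length, including the boundary cases where the length is $1$ or is stuck at $2$, so that "stable" matches the inequality $c(x)\le n-v_2(k+1)$ exactly. Everything else is bookkeeping with the lifting-the-exponent lemma.
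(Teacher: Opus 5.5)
Your proof is correct, and it takes a genuinely different route from the paper. The paper stays inside the wreath recursion throughout:
\begin{itemize}
\item it writes $z=\gamma^m z_k$, using that the centralizer of $\gamma$ is $\langle\langle\gamma\rangle\rangle$;
\item it splits into the cases $v_2(\ell)\ge 1$ and $v_2(\ell)=0$, where $\ell=(k-1)/2$;
\item it classifies $\gamma^m z_k$ up to conjugacy by $v_2(m)$ and counts stable vertices through sections via Proposition~\ref{prop:settledsections};
\item in the case $v_2(\ell)=0$ it reduces to $z_k^2=z_{k^2}$ using $s_n(\alpha^2)\le s_n(\alpha)$.
\end{itemize}
You instead conjugate $\gamma$ to the translation $x\mapsto x+1$ on $\ZZ_2$, which turns $z$ into an affine map. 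You then read every cycle length off $z^m(x)-x=S_m\bigl((k-1)x+b\bigr)$ together with the $2$-adic lifting-the-exponent lemma. This handles an arbitrary $z$ with $z\gamma z^{-1}=\gamma^k$ directly, needs no case split on $k \bmod 4$, and gives the exact cycle length of every vertex. The paper's recursive route, in return, keeps everything in the section/descendant language on which the later stable-block arguments are built. It also yields the conjugacy classification of $\gamma^m z_k$ along the way.

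Two small corrections.

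First, with the paper's right actions, $z\gamma=\gamma^k z$ reads $(x+k)z=(x)z+1$, so $(x)z=k^{-1}x+b$. This is harmless because $v_2(k^{-1}\pm 1)=v_2(k\pm 1)$.

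Second, your claim that $x$ is stable \emph{precisely} when $c(x)\le n-v_2(k+1)$ fails in one boundary case. If $v_2(k+1)=1$ and $c(x)=n$ exactly, your own formula gives $j(n)=0$, $j(n+1)=1$, $j(n+2)=2,\ldots$. So $x$ is a fixed vertex lying in a stable cycle of length $1$, which the definition allows. For example, for $x\mapsto 5x$ the vertex $2^{n-2}$ at level $n$ is such a vertex.

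Step 3 only uses the implication $c(x)\le n-v_2(k+1)\Rightarrow$ stable, and that direction is correct. Since $c(x)<n$ there, it holds uniformly for all vertices above $x$. So your bound $2^{\nu+1}$ stands. It is just not sharp for $k\equiv 1 \pmod 4$: treating the edge case correctly recovers the paper's sharper count of at most $2^{v_2(\ell)}=2^{\nu}$ non-stable vertices, from Propositions~\ref{prop1:liseven} and~\ref{prop2:liseven}.
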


While Cortez and Lukina \cite{CortezLukina} showed that the normalizer of an odometer is densely settled in a more general $d$-ary setting, our result provides a finer, element-by-element description of settledness in the binary case. Although related estimates appear implicitly in their work (for example, in the proof of Lemma~7.6), the statement above does not appear explicitly there. Moreover, our proof relies on the recursive structure of $\gamma$ rather than on topological dynamics.

Next, we show that the odometer $\gamma$ occurs naturally in the chosen model for the geometric IMG. Using Pink’s description of arithmetic IMGs, we prove that the normalizer of the group generated by $\gamma$ is contained in the arithmetic IMG.

Finally, we introduce the notion of \emph{descendants} of an automorphism in $\Omega$, which record the induced actions on subtrees under the standard wreath recursion. When an element acts nontrivially on the first level, passing to its descendants allows one to replace it by a simpler representative in its conjugacy class. A central innovation of this paper is the use of descendants to define stable blocks, which serve as the main tool for constructing settled elements. See Definition~\ref{defn:descendant} and Definition~\ref{defn:stableblock} for precise formulations. Stable blocks play an important role in the proof of Theorem~\ref{intro:thm1}.
We prove the following key result. 

\begin{theorem}\label{intro:thm3}
Let $K$ be a field of characteristic not equal to $2$, and let $f(x)\in K[x]$ be a PCF quadratic polynomial with a periodic postcritical orbit.
Then any stable block in the arithmetic IMG of $f(x)$ is settled.

\end{theorem}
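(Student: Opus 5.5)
We first have to guess what a stable block is, since the definition comes later in the paper. The plan assumes it is roughly the following. Take $\tau$ in the arithmetic IMG $G$ and a vertex $v$ at some level $m$ with $\tau$-orbit of length $\ell$. The descendant $\tau_v$ is the restriction of $\tau^\ell$ to the subtree $T_v$, identified with $T$ via wreath recursion. The block at $v$ is \emph{stable} when this descendant is conjugate, inside $G$, to an element $z$ of the normalizer $N$ of $\overline{\langle\gamma\rangle}$ with $z\gamma z^{-1}=\gamma^k$, $k\neq\pm1$, or to a power of the odometer itself. The proof reduces settledness of a stable block to Theorem~\ref{intro:thm2}. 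It also uses the containment $N\subseteq G$, which we take as established via Pink's description.

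\textbf{Step 1 (cycles of $\tau$ versus cycles of descendants).} If $v$ lies in a $\tau$-cycle of length $\ell$, then cycles of $\tau$ through vertices of $T_v$ at level $m+n$ correspond bijectively to cycles of $\tau_v$ at level $n$ of $T_v$. A $\tau_v$-cycle of length $L$ gives a $\tau$-cycle of length $\ell L$ meeting $T_v$ in exactly that cycle. This is the usual wreath-recursion computation. It shows that a cycle of $\tau$ is stable if and only if the corresponding descendant cycle is stable, for all but boundedly many initial levels. It also shows that the proportion of $\tau$-stable vertices at level $m+n$ is the average, over the $2^m$ vertices $v$ at level $m$, of the proportion of $\tau_v$-stable vertices at level $n$.

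\textbf{Step 2 (conjugation invariance).} Conjugation by any element of $\Omega$ maps cycles to cycles of the same length at each level. Hence settledness, and indeed the exact count of non-stable vertices, is a conjugacy invariant in $\Omega$. We may therefore replace each descendant by its conjugate $z\in N$.

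\textbf{Step 3 (apply Theorem~\ref{intro:thm2}).} For $z$ with $k\neq\pm1$, the number of non-stable vertices at level $n$ is at most $2^{\nu+1}$, so their proportion is at most $2^{\nu+1-n}\to0$. The odometer case ($k=1$ type, a single cycle that doubles each level) is trivially settled. Averaging over the finitely many vertices at level $m$ gives a proportion of non-stable vertices for $\tau$ tending to $0$, so $\tau$ is settled.

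\textbf{Main obstacle.} The hard part is Step 1 combined with the exact definition of a stable block. We must make sure that stability of a $\tau_v$-cycle, which means its length strictly increases at every later level of $T_v$, transfers to stability of the $\tau$-cycle. The only risk is at the junction level $m$: the cycle through $v$ itself need not be stable, and we must check that the definition counts only levels beyond $m$. We must also ensure the conjugating elements can be chosen compatibly, since the block may involve several level-$m$ orbits with possibly different $k$. Because there are finitely many orbits, the bound becomes $\sum_v 2^{\nu_v+1}$, which is still $O(1)$. If the paper's definition instead allows $k=-1$ descendants, we would need a separate argument, since $\gamma^{-1}$-conjugators such as the flip are not settled. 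We expect the definition excludes them.
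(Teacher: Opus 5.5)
Your argument rests on a guessed definition that does not match the paper's, and it only captures the easy case.

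\textbf{The actual definition.} In the paper, $\alpha=(\alpha_0,\alpha_1)\tau$ is a \emph{block} if $\tau\neq\id$, or if $\tau=\id$ and \emph{at least one} of $\alpha_0,\alpha_1$ lies in $\mathscr{U}(\gamma)=\{\gamma^m z_k : m\in\ZZ_2,\ k\neq\pm1\}$. It is a \emph{stable block} if all of its $n$th descendants are blocks. The first descendants are $\alpha_0,\alpha_1$ if $\tau=\id$ and $\alpha_0\alpha_1$ otherwise; up to conjugacy these agree with your restriction of a power of $\tau$ to $T_v$.

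\textbf{Why your argument covers only the easy case.} Nothing requires all descendants at some level to be conjugate into $N(\gamma)$. At each split only one child must lie in $\mathscr{U}(\gamma)$. So a stable block can have an infinite chain $\beta_1,\beta_2,\ldots$, with $\beta_{n+1}$ a first descendant of $\beta_n$, none of which lies in $\mathscr{U}(\gamma)$. Your Steps 1--3 prove exactly the case where no such chain exists, i.e.\ $D_n(\alpha)=\emptyset$ for some $n$. That is Lemma~\ref{lem:Dempty}, a direct consequence of Theorem~\ref{thm:descsettled}.

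The theorem is needed for the opposite case. For $a_i\gamma^m z_k$ (Proposition~\ref{prop:azk}), every level has a descendant of the form $a_j\gamma^{m'}z_{k'}$. This is not in $N(\gamma)$ because $a_j\notin N(\gamma)$, and no conjugacy of it into $N(\gamma)$ is available. So your plan does not reach the elements the theorem is actually used for. Step 1, which you flag as the main obstacle, is in fact the routine part.

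\textbf{What is missing.} You need a quantitative analysis of that infinite chain via its sign sequence $d_n=\sgn_1(\beta_n)$. There are three cases.
\begin{enumerate}
\item If $d_n=-1$ for all $n\ge m$, Remark~\ref{sgndecomp} gives $\sgn_j(\beta_m)=\sgn_1(\beta_{m+j-1})=-1$ for all $j$. Then $\beta_m$ is an odometer by Proposition~\ref{prop:odosgn}.
\item If $d_n=1$, then $\beta_n$ splits into $\beta_{n+1}$ and a sibling in $\mathscr{U}(\gamma)$ lying in the same coset $Gz_k$ (Lemma~\ref{lem:cosetHzk}). Corollary~\ref{zksummary} bounds the sibling's non-stable vertices by $2^{\nu+1}$ at \emph{every} level. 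Summing these losses over a run of $t$ such steps gives $s_n(\alpha)\ge s_{n-t}(\beta_t)+2^n-2^{n-t}-t\,2^{\nu+1}$. Hence $s_n(\alpha)\ge 2^n-n2^{\nu+1}$ if $d_n=1$ eventually.
\item In the mixed case, each step with $d_n=-1$ doubles cycle lengths but moves the chain from $Gz_k$ to $Gz_{k^2}$, raising $\nu$ by one. So a $+1$ step occurring after $M$ steps of sign $-1$ costs up to $2^{\nu+2M+1}$ vertices at the top level. One must then choose, depending on $n$, how far along the chain to run the estimate so that the total loss is still $o(2^n)$.
\end{enumerate}

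The essential inputs are the level-independent bound of Theorem~\ref{intro:thm2}, with its explicit dependence on $k$, applied infinitely often along the chain, together with the sign criterion for odometers. Your Step 3 applies Theorem~\ref{intro:thm2} only finitely often with fixed $\nu_v$, so it cannot handle this case.
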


The proof of Theorem~\ref{intro:thm1} is completed by an induction argument on the length of an associated word. Theorem~\ref{intro:thm2} provides the base step of induction, whereas Theorem~\ref{intro:thm3} is used in the induction step.

\section*{Acknowledgements}
The authors were supported by the TUBITAK project 124F203. The authors thank Olga Lukina for helpful comments on the manuscript. The authors also thank the referee for a careful reading of the manuscript and for many helpful comments and suggestions that improved the paper.

\section{Notation and background}\label{background}

\subsection{Automorphism group of a regular rooted binary tree}\ \\
Let $X=\{0,1\}$, and consider the infinite rooted binary tree $T$, whose vertices are formed by finite words over $X$. The root is represented by the empty word, and any vertex $v$ is adjacent to each $vx$ for $x \in X$. For $n \geq 1$, we denote by $T_n$ the finite rooted subtree consisting of all words of length at most $n$. The collection of words of length $n$ constitutes the \emph{level $n$} of $T$, and we write $V_n$ for the set of vertices at this level. Let $V$ denote the set of all vertices of the tree. 

An isomorphism between two rooted binary trees $T$ and $T'$ is a bijection of vertices that respects adjacency. When $T=T'$, the full group of such isomorphisms is called the automorphism group of $T$, denoted by $\Omega$. Likewise, $\Omega_n$ stands for the automorphism group of the finite tree $T_n$. For each $n \geq 1$, there is a canonical restriction map
\begin{equation}\label{not:pi}
\pi_{n}\colon \Omega\to \Omega_n,
\end{equation}
which restricts an automorphism of $T$ to its action on $T_n$ (levels $0,1,\ldots,n$). More generally, for $m \geq n$, the natural projection $\Omega_m \to \Omega_n$ is denoted by $\pi_{m,n}$. For simplicity, we use $\pi_n$ whenever the domain is unambiguous.

Given $\gamma \in \Omega$, we denote its restriction to $T_n$ by $\gamma \restr{T_n}$. Two automorphisms $\gamma,\gamma' \in \Omega$ coincide precisely when $\gamma \restr{T_n} = \gamma' \restr{T_n}$ for every $n \geq 1$. For any subgroup $H \leq \Omega$, we define $H_n := \pi_n(H) \subseteq \Omega_n$.

The automorphism group $\Omega$ is the inverse limit of the system $((\Omega_n)_{n \geq 1}, (\pi_{m,n})_{m\geq n})$. It follows that $\Omega$ is a complete profinite group. For each $n \geq 1$, we define 
\[ U_n := \ker(\pi_n: \Omega \to \Omega_n).\] 

The family $\{U_n\}_{n \geq 1}$ forms a neighborhood basis of the identity in $\Omega$ under the profinite topology on $\Omega$.
We also note that since each $\Omega_n$ is a $2$-group, $\Omega$ itself is a pro-$2$ group.  

\subsection{The $2$-adic integers}\ \\
We write $\ZZ_2$ for the ring of $2$-adic integers, and $\ZZ_2^\times$ for its multiplicative units. Let $v_2$ denote the $2$-adic valuation on $\ZZ_2$. An element $a \in \ZZ_2$ is written uniquely in the form $\sum_{i=0}^{\infty}a_i2^i$, where $a_i \in \{0,1\}$. The natural embedding defined by the $2$-adic expansion of integers identifies $\ZZ$ as a subring of $\ZZ_2$. 

The $2$-adic valuation of a non-zero element $a$ is the smallest $i$ with $a_i\neq 0$. We assume that $v_2(0)=\infty$. The ring $\ZZ_2$ has a unique maximal ideal which is given by $$\{ a \in \ZZ_2 \; | \; v_2(a)\geq 1\}.$$ Hence $\ZZ_2^{\times}$ is the set of elements $a \in \ZZ_2$ with $v_2(a)=0$. Let $a \in \ZZ_2^{\times}$. Then exactly one of $v_2(a \pm 1)$ is equal to $1$ while the other is $\geq 2$. In other words, for $a \in \ZZ_2^{\times}$, the element $\frac{a \pm 1}{2}$ is an element of $\ZZ_2$. In particular, we have $v_2(a^2-1)> 2$.

We also note that if $a,b \in \ZZ_2$ such that $v_2(a)=v_2(b)=v$, then $v_2(a+b)>v$. This is not true for $p$-adic integers if $p>2$.

\subsection{Closed subgroups of $\Omega$} \ \\
Let $S$ be a subset of $\Omega$. An element $\alpha \in \Omega$ is in the closure of $S$ if and only if for every $n\geq 1$, there is some $g_n \in S$ such that $\alpha^{-1}g_n \in U_n$, i.e.,  $$\alpha \restr{T_n}=g_n \restr{T_n}.$$

The closure of the cyclic subgroup generated by an element $\alpha \in \Omega$ is 
\[ \langle\langle \alpha \rangle\rangle := \{ \alpha^k \mid k \in \ZZ_2 \}.\]

Let $k=\sum_{i=0}^{\infty}a_i2^i \in \ZZ_2$ and let $\alpha \in \Omega$. For any $n\geq 0$,  let $k_n=\sum_{i=0}^{n-1}a_i2^i$, then the automorphisms $\alpha^k$ and $\alpha^{k_n}$ agree on level $n$ of the tree.

For $c_1,\dots,c_k \in \Omega$, we denote by $\langle\langle c_1,\dots,c_k \rangle\rangle$ the closure of $\langle c_1,\dots,c_k \rangle$ inside $\Omega$. A closed subgroup $H$ is said to be topologically generated by $c_1,\dots,c_k$ when
\[
H = \langle\langle c_1,\dots,c_k \rangle\rangle.
\]
If $H \leq \Omega$ is closed, then $H$ arises as the inverse limit of $(H_n)_{n\geq1}$. Consequently, every closed subgroup of $\Omega$ is also a pro-$2$ group.  

\subsection{The semi-direct product structure of $\Omega$}\ \\

Let $T_0$ and $T_1$ be the subtrees rooted at the two vertices of level one. Embedding the product $\Omega(T_0)\times \Omega(T_1)$ into $\Omega$ is achieved by letting $(\gamma_0,\gamma_1)$ act trivially on the first level and as $\gamma_i$ on $T_i$ for $i=0,1$.  

Identifying $T$ with $T_0$ via $w \mapsto 0w$, and with $T_1$ via $w\mapsto 1w$, gives rise to group isomorphisms, yielding an embedding
\[
\Omega \times \Omega \hookrightarrow \Omega.
\]
Its image consists precisely of automorphisms that act trivially on the first level.  

Thus, $\Omega$ decomposes as the semi-direct product
\[
\Omega \simeq (\Omega \times \Omega) \rtimes \langle \sigma \rangle,
\]
where $\sigma$ swaps the two subtrees $T_0$ and $T_1$. Accordingly, any element $\gamma \in \Omega$ admits a representation $\gamma=(\gamma_0,\gamma_1)\tau$, with $\gamma_0,\gamma_1 \in \Omega$ and $\tau \in S_2$.  

The group law is given by
\begin{equation}\label{semi}
(\gamma_0,\gamma_1)\tau \cdot (\gamma_0',\gamma_1')\tau' = (\gamma_0\gamma'_{(0)\tau},\, \gamma_1\gamma'_{(1)\tau})\,\tau\tau',
\end{equation}
and inversion is expressed as
\[
((\gamma_0,\gamma_1)\tau)^{-1} = (\gamma^{-1}_{(0)\tau},\, \gamma^{-1}_{(1)\tau})\,\tau^{-1}.
\]

For $n \geq 2$, one similarly has
\[
\Omega_n \simeq (\Omega_{n-1} \times \Omega_{n-1}) \rtimes S_2,
\]
where $S_2 \simeq \langle \sigma \rangle$.  

If $\gamma=(\gamma_0,\gamma_1)\tau$, then for $x\in X$ and any word $v$,
\[
(xv)\gamma = (x)\tau \cdot (v)\gamma_x.
\]
Here permutations in $S_2$ act from the right. In particular, $\sigma=(\id,\id)\sigma \in \Omega$, where $\id$ denotes the identity.  

\begin{example}
Let $\gamma=(\gamma,\id)\sigma$. Then 
$(10)\gamma=00$, $(00)\gamma=11$, $(01)\gamma=10$, and $(11)\gamma=01$.
\end{example}

\begin{proposition}\label{prop:conj}
Let $\alpha=(\alpha_0,\alpha_1)\tau$ be an element of $\Omega$.
\begin{enumerate}
    \item Assume $\tau=\id$. Then $\alpha$ is conjugate to $(\beta_0,\beta_1)$ if and only if either $\alpha_0,\alpha_1$ are conjugate to $\beta_0,\beta_1$ or $\beta_1,\beta_0$ respectively. 
    \item Assume $\tau\neq \id$. Then $\alpha$ is conjugate to $(\alpha_0\alpha_1,\id)\sigma$ in $\Omega$.
\end{enumerate}
\end{proposition}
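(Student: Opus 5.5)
For both parts I will conjugate by explicit elements of $\Omega$, computed with the group law \eqref{semi}. The one structural fact I need is that conjugation by a first-level element $(c_0,c_1)$ acts coordinatewise, and conjugation by $\sigma$ swaps the two coordinates.

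\emph{Part (1).} For sufficiency, suppose $\beta_i=c_i^{-1}\alpha_i c_i$. Then \eqref{semi} gives
\[
(c_0,c_1)^{-1}(\alpha_0,\alpha_1)(c_0,c_1)=(\beta_0,\beta_1).
\]
If instead $\beta_0,\beta_1$ are conjugate to $\alpha_1,\alpha_0$, I first conjugate by $\sigma$. Since $\sigma^{-1}(\alpha_0,\alpha_1)\sigma=(\alpha_1,\alpha_0)$, this reduces to the previous case.

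For necessity, write a conjugator as $g=(g_0,g_1)\tau'$. Its first-level part is either trivial or $\sigma$. In the first case $g^{-1}\alpha g=(g_0^{-1}\alpha_0g_0,\,g_1^{-1}\alpha_1g_1)$, and I compare coordinates with $(\beta_0,\beta_1)$. In the second case the coordinates come out swapped, which gives the other alternative. The only care needed is the index bookkeeping in \eqref{semi}, namely which $\gamma'_{(x)\tau}$ appears in each slot.

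\emph{Part (2).} Here $\tau=\sigma$, so $\alpha=(\alpha_0,\alpha_1)\sigma$. I look for a conjugator of the form $c=(c_0,c_1)$. A direct computation with \eqref{semi} gives
\[
c^{-1}\alpha c=(c_0^{-1}\alpha_0c_1,\; c_1^{-1}\alpha_1c_0)\sigma.
\]
I will choose $c_0=\id$ and $c_1=\alpha_1$. Then the second coordinate becomes $\alpha_1^{-1}\alpha_1=\id$, and the first becomes $\alpha_0\alpha_1$. This yields exactly $(\alpha_0\alpha_1,\id)\sigma$.

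The main obstacle is computing $c^{-1}\alpha c$ correctly in the paper's right-action convention. Before fixing the choice of $c_1$, I will double-check it by evaluating both sides on a vertex $xv$ using $(xv)\gamma=(x)\tau\,(v)\gamma_x$. If the convention turns out to give the opposite coordinate order, I will take $c_0=\alpha_0^{-1}$ instead, or equivalently use $c=(\id,\alpha_0)$ up to the analogous adjustment. One of these choices kills a coordinate, and the surviving coordinate is a product of $\alpha_0$ and $\alpha_1$. If that product comes out as $\alpha_1\alpha_0$, I will apply Part (1)-style coordinate conjugation again: since $\alpha_1\alpha_0$ and $\alpha_0\alpha_1$ are conjugate, the result can be brought to the required form $(\alpha_0\alpha_1,\id)\sigma$.
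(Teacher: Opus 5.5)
Your proposal is correct and is the direct computation the paper has in mind; the paper itself only calls it a straightforward calculation and cites Pink and Ejder. Your formula $c^{-1}\alpha c=(c_0^{-1}\alpha_0c_1,\,c_1^{-1}\alpha_1c_0)\sigma$ already holds under the paper's right-action convention (and for a conjugator $(g_0,g_1)\sigma$ in Part (1) one gets $(g_1^{-1}\alpha_1g_1,\,g_0^{-1}\alpha_0g_0)$), so conjugating by $(\id,\alpha_1)$ gives $(\alpha_0\alpha_1,\id)\sigma$ directly and the fallback in your last paragraph is unnecessary.
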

\begin{proof}
This is a straight forward calculation. See \cite{Pinkpolyn} or \cite{Ejder2025} for a proof.
\end{proof}

\subsection{Sign of an element in $\Omega$}\ \\
For every $n \geq 1$, $\Omega_n$ acts faithfully on the $n$-th level of $T$, giving an embedding into $S_{2^n}$. Denote by $\sgn_n$ the sign homomorphism of the induced permutation:
\begin{equation}
\sgn_n\colon \Omega \to \{\pm1\}.
\end{equation}

For example, the element $\sigma\restr{T_n}$ decomposes into $2^{n-1}$ disjoint transpositions, hence
\begin{equation}\label{sgn:sigma}
\sgn_n(\sigma)=
\begin{cases}
-1 & n=1,\\
1 & n>1.
\end{cases}
\end{equation}

\begin{remark}\label{sgndecomp}
    If $\alpha=(\alpha_0,\alpha_1)\tau \in \Omega$, then $\sgn_n(\alpha)=\sgn_{n-1}(\alpha_0)\sgn_{n-1}(\alpha_1)=\sgn_{n-1}(\alpha_0\alpha_1)$ for all $n\geq 2$.
\end{remark}
 
 \subsection{Odometers}\ \\
An automorphism $\alpha \in \Omega$ is called an \emph{odometer} if $\alpha$ acts as a $2^n$ cycle on every level $n$ of the tree. Equivalently, $\alpha$ is an odometer if $\alpha \restr{T_n}$ has order $2^n$ for all $n\geq 1$. 

We will define the automorphism $\gamma \in \Omega$ recursively as follows. Set
\[
\gamma\restr{T_1}=\sigma,
\]
and for $n\ge2$ define
\[
\gamma\restr{T_n}
=
(\gamma\restr{T_{n-1}},\id)\sigma.
\]

Taking the limit as $n \to \infty$, we obtain that $\gamma=(\gamma,\id)\sigma$. The recursively defined element $\gamma=(\gamma,\id)\sigma$ is an example of an odometer and it is called the standard odometer.

\begin{proposition}\label{prop:odosgn}\cite[Proposition~3.1]{Ejder2025}
    An element $\alpha \in \Omega$ is an odometer if and only if $\sgn_n(\alpha)=-1$ for all $n\geq 1$. 
\end{proposition}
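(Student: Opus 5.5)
We argue by induction on the level $n$, using the wreath recursion together with Remark~\ref{sgndecomp} and the observation that an element of $\Omega_n$ is a single $2^n$-cycle exactly when its cube-free recursive structure forces transitivity at every level.

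\emph{Forward direction.} If $\alpha$ is an odometer, then $\alpha\restr{T_n}$ is a $2^n$-cycle on $V_n$. A cycle of even length $2^n$ is an odd permutation, so $\sgn_n(\alpha)=-1$ for every $n\geq 1$.

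\emph{Converse.} We prove, by induction on $n$, that $\sgn_m(\alpha)=-1$ for all $m\le n$ implies that $\alpha$ acts transitively on $V_n$.
\begin{itemize}
\item For $n=1$, $\sgn_1(\alpha)=-1$ means $\alpha$ swaps the two vertices of level one, so it acts transitively on $V_1$.
\item For the inductive step, write $\alpha=(\alpha_0,\alpha_1)\sigma$; the first-level component is $\sigma$ because $\sgn_1(\alpha)=-1$. By Proposition~\ref{prop:conj}(2), $\alpha$ is conjugate to $\beta=(\alpha_0\alpha_1,\id)\sigma$. Conjugation preserves each $\sgn_m$ and the cycle type on each level, so it suffices to treat $\beta$.
\item Set $\delta=\alpha_0\alpha_1$. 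By Remark~\ref{sgndecomp}, $\sgn_{m-1}(\delta)=\sgn_m(\alpha)=-1$ for all $2\le m\le n$, so the induction hypothesis applies to $\delta$ on levels up to $n-1$. Hence $\delta$ acts as a $2^{n-1}$-cycle on $V_{n-1}$.
\item Now $\beta^2=(\delta,\delta)$. Therefore $\beta^2$ restricted to the subtree $0T$ acts as $\delta$, which is a $2^{n-1}$-cycle on the $2^{n-1}$ level-$n$ vertices below $0$.
\item Since $\beta$ also swaps $0T$ and $1T$, the $\langle\beta\rangle$-orbit of a vertex $0w$ contains all vertices $0w'$ and their images under $\beta$ in $1T$. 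So $\beta$ is transitive on $V_n$, and therefore so is $\alpha$.
\end{itemize}
Transitivity of a cyclic group on $2^n$ points means $\alpha\restr{T_n}$ acts as a single $2^n$-cycle. As this holds for every $n$, $\alpha$ is an odometer.

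The main point requiring care is the bookkeeping of indices: Remark~\ref{sgndecomp} shifts the level by one, so the hypothesis at level $n$ for $\alpha$ must supply exactly what is needed at level $n-1$ for $\delta$. The conjugation step also needs the fact that conjugation in $\Omega$ preserves cycle structure on each level. This is clear, because conjugation by a tree automorphism is relabeling of the vertices within each level.
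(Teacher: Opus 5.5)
Your proof is correct and complete. The inductive step works as written: conjugate to $\beta=(\alpha_0\alpha_1,\id)\sigma$ via Proposition~\ref{prop:conj}(2), use Remark~\ref{sgndecomp} to shift the sign hypothesis down one level to $\delta=\alpha_0\alpha_1$, then use $\beta^2=(\delta,\delta)$ together with the swap to get transitivity on $V_n$.

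The paper gives no argument of its own here; it only cites \cite[Proposition~3.1]{Ejder2025}. Your argument is the natural one built from the tools the paper does set up, and it is the same conjugation trick the paper uses in proving Proposition~\ref{prop:settledsections}. One small fix: delete the phrase ``cube-free recursive structure'' from your opening sentence, which is meaningless and plays no role in the argument.
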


This does not hold for $d$-ary trees where $d>2$. We also note that any conjugate of an odometer in $\Omega$ is again an odometer since the cycle structure is preserved under conjugacy.

\subsection{Monodromy groups}\label{mon}\ \\

Let $K$ be a number field and $\bar{K}$ be a separable closure of $K$.  Let $f\colon\PP^1_K \to \PP^1_K$ be a morphism of degree $d$ defined over $K$. We denote the set of critical points of $f$ by $C$ and the forward orbit of the points in $C$ by $P$, i.e.
\[ P:=\{ f^n(c) \mid n\geq 1, c \in C\}. \]
We call $P$ the \emph{postcritical set} of $f$. 
A rational function is called postcritically finite (PCF) if the postcritical set $P$ of $f$ is finite.
For $n\geq 1$, the iterate $f^n$ is a connected unramified covering of $\PP^1_K \backslash P$, hence it is determined by the monodromy action of $\pi_1^{\acute{e}t}(\PP_K^1 \backslash P, x_0)$ on $f^{-n}(x_0)$ up to isomorphism, where $x_0 \in \PP^1_K \backslash P$. Let $T_{x_0}$ be the tree defined as follows: it is rooted at $x_0$, the leaves of $T_{x_0}$ are the points of $f^{-n}(x_0)$ for all $n\geq 1$, and the two leaves $p,q$ are connected if $f(p)=q$. By taking the inverse limit over $n$, associated monodromy defines a representation 
\begin{equation}\label{eq:rho}
 \rho_{f,x_0}\colon \pi_1^{\acute{e}t}(\PP_K^1 \backslash P, x_0) \to \Omega(T_{x_0})
\end{equation} 
where $\Omega(T_{x_0})$ denotes the automorphism group of $T_{x_0}$. We note that if $y_0$ is another point in $\PP^1_K\backslash P$, there is an isomorphism between the trees $T_{x_0}$ and $T_{y_0}$. Furthermore, $T_{x_0}$ and $T_{y_0}$ are isomorphic to the tree $T$ we defined earlier. Under these isomorphisms, $\im(\rho_{f, x_0})$ and $\im(\rho_{f, y_0})$ are conjugate subgroups of $\Omega$. Hence, we have:
\begin{equation}\label{eq:rhoOmega}
 \rho_{f}\colon \pi_1^{\acute{e}t}(\PP_K^1 \backslash P, x_0) \to \Omega
\end{equation} 

We call the image of $\rho_f$ the \emph{arithmetic iterated monodromy group} of $f$ and denote it by $G^{\text{arith}}(f)$. One can also study this representation over $\bar{K}$ and obtain 
\[ \pi_1^{\acute{e}t}(\PP_{\bar{K}}^1 \backslash P, x_0) \to \Omega(T).
\]
 We call the image of the map in this case the \emph{geometric iterated  monodromy group} and denote it by $\gm(f)$. The groups $\ga(f)$ and $\gm(f)$ fit into an exact sequence as follows:
 \begin{equation}\label{eq:exact}
   \begin{tikzcd}
1\arrow{r}  &  \pi_1^{\acute{e}t}(\PP_{\bar{K}}^1 \backslash P, x_0) \arrow{r} \arrow{d}& \pi_1^{\acute{e}t}(\PP_K^1 \backslash P, x_0) \arrow{r} \arrow{d} &\Gal(\bar{K}/K)\arrow{r} \arrow{d} &1  \\
   1 \arrow{r} &   G^{\text{geom}}(f) \arrow{r} &G^{\text{arith}}(f) \arrow{r}  &\Gal(F /K) \arrow{r} &1 \\
     \end{tikzcd}
  \end{equation}
for some field extension $F$ of $K$. We call this field $F$ the field of constants for $f$. 

The groups $G^{\text{geom}} (f)$ and $G^{\text{arith}}(f)$ are profinite self-similar subgroups of $\Omega$. Also note that $G^{\text{geom}}(f)$ is a normal subgroup of $G^{\text{arith}}(f)$. 

\subsection{Arboreal Galois groups} \ \\
Let $K$ be a number field, and let $\bar{K}$ be a separable closure of $K$. Let $f \colon \PP^1_K \to \PP^1_K$ be a morphism of degree $d$ defined over $K$, and fix $\alpha \in K$. Consider the morphism $s: \Spec(K) \to \PP_K^1$ given by $\alpha$. Let $\bar{\alpha}$ denote the image of $s$ and assume that it does not belong to $P$.
The functoriality of the \'etale fundamental group induces the homomorphism $G_K \to \pi_1^{\acute{e}t}(\PP_{K}^1\backslash P, \bar{\alpha})$. Composing this with $\rho_f$, we obtain
\[
\rho_{f,\alpha} \colon G_K \to \Omega(T_{\bar{\alpha}}).
\]

The image of $\rho_{f,\alpha}$ is called the \emph{arboreal Galois group} attached to the pair $(f, \alpha)$. We will denote this group by $G_{f,\alpha}$.
Since changing the base point is equivalent to conjugation by an element of $\Omega$, arboreal Galois group $G_{f,\alpha}$ is conjugate to a subgroup of the arithmetic IMG of $f$.

We may also describe $\rho_{f,\alpha}$ as follows:  the set of preimages of $\alpha$ under the iterates of $f$ forms a regular rooted binary tree $T_\alpha$, and the absolute Galois group $G_K = \Gal(\bar{K}/K)$ acts naturally on this tree. The tree $T_\alpha$ is isomorphic to the standard binary tree $T$, and under this isomorphism we obtain the representation $\rho_{f,\alpha}: G_K \to \Omega$.

\section{Settled elements}

In this section, we give a precise definition of settledness and prove several basic properties of settled elements, including invariance under conjugation and under taking squares. We also introduce one of the main tools used in this article: the notion of descendants of an automorphism in $\Omega$. This concept was first introduced by Pink \cite{Pinkorbitlength} in a different setting. Descendants will play a central role in the arguments of the final two sections.

\begin{definition}\label{defn:settled}
   Let $\alpha$ be an automorphism of the tree $T$. Let $n \geq 0$, and let $v$ be a vertex of $T$ at level $n$. We say $v$ is in a \emph{stable cycle of length} $k \geq 1$ of $\alpha$ if:
\begin{enumerate}
\item the orbit $\{(v)\alpha^i \mid i\geq 0\}$ has $k$ distinct elements.
\item for every $m > n$, all vertices in $T_m$ lying above $\{(v)\alpha^j \mid j \geq 0\}$ are in the same cycle of length $2^{m-n}k$.
\end{enumerate}

An automorphism $\alpha \in \Omega$ is called \textit{settled} if 
 $$ \lim_{n\rightarrow \infty} \frac{|\{v \in V_n \; :\; v \; \text{is in a stable cycle of $\alpha$}\}|}{|V_n|}=1 .$$
   An automorphism $\alpha \in \Omega$ is called \textit{strongly settled} if there exists $n\geq 1$ such that every vertex in $V_n$ is in a stable cycle of $\alpha$. A subgroup $H$ of $\Omega$ is called \emph{densely settled} if the set of settled elements is dense in $H$ under the profinite topology.

\end{definition}

 Let $\alpha \in \Omega$. Denote by $s_n(\alpha)$ the number of vertices at level $n$ that are in a stable cycle of $\alpha$. By definition, if $v \in V_n$ is in a stable cycle, then any vertex in $V_{n+1}$ lying above one of
$(v)\alpha, \dots, (v)\alpha^{k-1}$ is also in a stable cycle. Hence, for any $n\geq 1$,
\[
s_{n+1}(\alpha) \ge 2 s_n(\alpha).
\]
This implies that the sequence $(s_n(\alpha)/2^n)_{n\geq 1}$ is non-decreasing and bounded above by $1$, and therefore it converges.

\begin{example}
Let $\alpha$ be an odometer in $\Omega$ and let $k\in \ZZ_2$. If $v_2(k)=0$, then
$$\sgn_n(\alpha^k)=\sgn_n(\alpha)^{k}=-1$$
for all $n\geq 1$. Hence $\alpha^{k}$ is also an odometer. On the other hand, if $v_2(k)=m\geq 1$, then $\alpha^k=\alpha^{2^mk'}$ for some $k' \in \ZZ_2^{\times}$. Using the first observation, we may assume that $k=2^m$ for some $m\geq 1$. Then $\alpha^k\restr{T_n}$ is the product of $2^m$ cycles of length $2^{n-m}$ for all $n\geq 1$. 

Hence for any $k \in \ZZ_2$, $\alpha^k$ is strongly settled.
\end{example}

We first show that settledness is preserved under conjugacy.

\begin{proposition}\label{prop:conjsettled}
   Let $\alpha, \alpha' \in \Omega$ be conjugate to each other. Then $ s_n(\alpha)=s_n(\alpha')$ for all $n\geq 1$. Hence
   $\alpha$ is (strongly) settled if and only if $\alpha'$ is (strongly) settled.
\end{proposition}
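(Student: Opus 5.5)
The plan is to write $\alpha' = \beta^{-1}\alpha\beta$ for some $\beta \in \Omega$ and show that $\beta$ maps stable cycles of $\alpha$ at level $n$ bijectively onto stable cycles of $\alpha'$ at level $n$. Since $\beta$ is a tree automorphism, it preserves levels, and for $v\in V_n$ we have $(w)\beta \in V_m$ for every $w\in V_m$ lying above $v$. Moreover $(w)\beta$ lies above $(v)\beta$, because $\beta$ respects adjacency and fixes the root. (Permutations act on the right, consistent with the paper's convention.)

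First step: for any vertex $w$ and any $i \geq 0$,
\[
(w)\beta\,\alpha'^i = (w)\beta\beta^{-1}\alpha^i\beta = ((w)\alpha^i)\beta .
\]
Hence $\beta$ carries the $\alpha$-orbit of $w$ bijectively onto the $\alpha'$-orbit of $(w)\beta$, so the two orbits have the same length. Applying this to $v \in V_n$ shows that condition (1) of Definition~\ref{defn:settled} transfers with the same $k$.

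Second step: condition (2). Let $C$ be the $\alpha$-cycle of $v$, and fix $m>n$. The set $A_m$ of vertices of $V_m$ lying above $C$ is mapped by $\beta$ bijectively onto the set $A'_m$ of vertices of $V_m$ lying above $(C)\beta$, which by the first step is the $\alpha'$-cycle of $(v)\beta$. Since $\beta$ intertwines the two actions, $A_m$ is a single $\alpha$-cycle of length $2^{m-n}k$ if and only if $A'_m$ is a single $\alpha'$-cycle of that length. Thus $v$ lies in a stable cycle of $\alpha$ if and only if $(v)\beta$ lies in a stable cycle of $\alpha'$. Since $\beta$ restricts to a bijection of $V_n$, it follows that $s_n(\alpha)=s_n(\alpha')$. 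The converse direction follows by symmetry, using $\beta^{-1}$.

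Consequences: settledness depends only on the sequence $s_n/2^n$, so it is conjugation invariant. Strong settledness means that $s_n=2^n$ for some $n$, which is likewise conjugation invariant. There is no real obstacle; the only care needed is to check that $\beta$ sends vertices above $C$ exactly onto vertices above $(C)\beta$. This holds because $\beta$ commutes with the projection to level $n$, being a tree automorphism.
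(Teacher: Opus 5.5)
Your proposal is correct and follows the same approach as the paper: the conjugating element carries $\alpha$-orbits to $\alpha'$-orbits and preserves levels and the ``lying above'' relation. It therefore gives a bijection of $V_n$ between the vertices in stable cycles of $\alpha$ and those of $\alpha'$. The paper states this in one line, and you have supplied the routine verifications, written consistently with the right-action convention.
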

A more general version of Proposition~\ref{prop:conjsettled} is given in \cite[Proposition 6.14]{CortezLukina}.
\begin{proof} 
Let $\alpha'=g \alpha g^{-1}$ for some $g \in \Omega$. Since conjugation preserves the cycle structure, a vertex $v \in V_n$ is in a stable cycle of $\alpha$ if and only if $g(v)$ is in a stable cycle of $\alpha'$. 
\end{proof}

\begin{proposition}\label{prop:settledsections}
Let $\alpha = (\alpha_0, \alpha_1) \tau \in \Omega$.
\begin{enumerate}
    \item If $\tau = \id$, then 
    \[
    s_n(\alpha) = s_{n-1}(\alpha_0) + s_{n-1}(\alpha_1) \quad \text{for all } n\ge 1.
    \]
    \item If $\tau = \sigma$, then 
    \[
    s_n(\alpha) = 2 s_{n-1}(\alpha_0 \alpha_1) \quad \text{for all } n\ge 1.
    \]
\end{enumerate}
\end{proposition}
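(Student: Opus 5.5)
For part (1) I would argue directly from the definition of stable cycles. Since $\tau=\id$, the automorphism $\alpha$ preserves each subtree $T_0$ and $T_1$. Under the identification $w\mapsto xw$, it acts on $T_x$ as $\alpha_x$ acts on $T$. The vertices of $V_n$ split as $0V_{n-1}\sqcup 1V_{n-1}$. Every $\alpha$-orbit of a vertex $xv$ is $x$ followed by the $\alpha_x$-orbit of $v$, so the two orbits have the same length. The same holds for the vertices above them at every level $m>n$. Hence conditions (1) and (2) of Definition~\ref{defn:settled} for $xv$ under $\alpha$ are equivalent to the same conditions for $v$ under $\alpha_x$. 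Counting gives $s_n(\alpha)=s_{n-1}(\alpha_0)+s_{n-1}(\alpha_1)$. The case $n=1$ needs a convention for $s_0$ (whether the root is in a stable cycle); it is handled the same way, since the root of $T_x$ corresponds to the vertex $x$.

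For part (2) I would first reduce by conjugation. By Proposition~\ref{prop:conj}(2), $\alpha$ is conjugate to $\beta:=(\alpha_0\alpha_1,\id)\sigma$. By Proposition~\ref{prop:conjsettled}, $s_n(\alpha)=s_n(\beta)$, so it suffices to treat $\beta$. Using the multiplication rule \eqref{semi}, I compute $\beta^2=(\alpha_0\alpha_1,\alpha_0\alpha_1)$. More generally, $\beta^{2j}=((\alpha_0\alpha_1)^j,(\alpha_0\alpha_1)^j)$ and $\beta^{2j+1}=((\alpha_0\alpha_1)^{j+1},(\alpha_0\alpha_1)^j)\sigma$.

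The key structural claim is as follows. For $v\in V_{n-1}$, the $\beta$-orbit of $0v$ is the disjoint union of $0\cdot\mathcal O$ and $1\cdot\mathcal O'$. Here $\mathcal O$ is the $\alpha_0\alpha_1$-orbit of $v$, and $\mathcal O'$ is its image under the identity section, so $\mathcal O'=\mathcal O$ as a set. Consequently the $\beta$-orbit of $0v$ has exactly twice the length of the $\alpha_0\alpha_1$-orbit of $v$. It also contains the $\beta$-orbit of $1v$, since $(0v)\beta=1\,(v)\alpha_0\alpha_1$ and $(1v)\beta=0v$. Concretely, $\beta$ alternates between the two subtrees, and the orbit closes only after an even number of steps $2\ell$, where $\ell$ is the $\alpha_0\alpha_1$-orbit length of $v$.

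The same doubling applies at every level $m>n$ to the vertices lying above the orbit. Thus $v$ is in a stable cycle of $\alpha_0\alpha_1$ (length $k$, lengths $2^{m-n+1}k$ above) if and only if $0v$, and likewise $1v$, is in a stable cycle of $\beta$ of length $2k$ with lengths $2^{m-n}\cdot 2k$ above. This gives a bijection between stable-cycle vertices of $\alpha_0\alpha_1$ in $V_{n-1}$ and the stable-cycle vertices of $\beta$ in $0V_{n-1}$. Since each $\beta$-cycle meets both halves equally, the count doubles: $s_n(\beta)=2s_{n-1}(\alpha_0\alpha_1)$.

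The main obstacle is the bookkeeping in part (2). One must check that "all vertices above the orbit lie in a single cycle of length $2^{m-n}k$" transfers correctly. The vertices above the $\beta$-orbit of $0v$ at level $m$ are exactly $0w$ and $1w$, with $w$ ranging over the vertices above $\mathcal O$ at level $m-1$. Their $\beta$-cycles are the doubles of the $\alpha_0\alpha_1$-cycles of those $w$. So there is a single cycle on one side exactly when there is a single cycle on the other, and I would verify this carefully using the formula for $\beta^{2j}$.
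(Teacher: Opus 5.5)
Your proof is correct and follows essentially the same route as the paper. Part (1) uses the independent action on the two level-one subtrees, and part (2) conjugates to $(\alpha_0\alpha_1,\id)\sigma$ via Propositions~\ref{prop:conj} and~\ref{prop:conjsettled} and observes that every orbit doubles in length; your explicit check of the ``single cycle above the orbit'' condition spells out what the paper leaves implicit. (Definition~\ref{defn:settled} already allows $n\ge 0$, so $s_0$ needs no extra convention: the root lies in a stable cycle exactly when the element is an odometer.)
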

\begin{proof}
If $\tau = \id$, then $\alpha$ acts independently on the two subtrees at level $1$, giving $s_n(\alpha) = s_{n-1}(\alpha_0) + s_{n-1}(\alpha_1)$.

If $\tau = \sigma$, then $\alpha$ is conjugate to $(\alpha_0 \alpha_1, \id) \sigma$ by Proposition~\ref{prop:conj}. 
Using Proposition~\ref{prop:conjsettled}, we may assume $\alpha = (\alpha_0\alpha_1, \id) \sigma$. Then for any word $v$ of length $n-1$, 
$$(0v)\alpha = 1(v)\alpha_0\alpha_1\text{ and } (1v)\alpha = 0v,$$ which yields cycles of twice the length of $\alpha_0\alpha_1$'s cycles. Hence $s_n(\alpha) = 2 s_{n-1}(\alpha_0\alpha_1)$.
\end{proof}
We obtain the following immediate corollary.

\begin{corollary}\label{cor:descsettled}
Let $\alpha \in \Omega$ be an element of the form $\alpha=(\alpha_0,\id)\sigma$.
Then $\alpha$ is (strongly) settled if and only if $\alpha_0$ is. Similarly if $\alpha=(\alpha_0,\alpha_1)$, then $\alpha$ is (strongly) settled if and only if $\alpha_0$ and $\alpha_1$ are both (strongly) settled.
\end{corollary}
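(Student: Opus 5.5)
We derive Corollary~\ref{cor:descsettled} directly from the recursive counting formulas of Proposition~\ref{prop:settledsections}, translating them into statements about the normalized proportions $s_n(\cdot)/2^n$.

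\emph{First case.} Let $\alpha=(\alpha_0,\id)\sigma$. Part~(2) of Proposition~\ref{prop:settledsections} gives
\[
s_n(\alpha)=2s_{n-1}(\alpha_0\cdot\id)=2s_{n-1}(\alpha_0),
\]
so $s_n(\alpha)/2^n=s_{n-1}(\alpha_0)/2^{n-1}$ for every $n\geq 1$. The limit defining settledness for $\alpha$ is therefore literally the same sequence, shifted by one index, as the one for $\alpha_0$. In particular one limit equals $1$ exactly when the other does.

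For strong settledness, note that $s_n(\beta)=2^n$ says precisely that every vertex of $V_n$ lies in a stable cycle of $\beta$. The identity $s_n(\alpha)=2^n \iff s_{n-1}(\alpha_0)=2^{n-1}$ then gives the equivalence, with the witnessing level shifted by one. There is a small edge case when $\alpha_0$ is strongly settled at level $0$: here we use $s_{m+1}\geq 2s_m$ to move to a higher level.

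\emph{Second case.} Let $\alpha=(\alpha_0,\alpha_1)$. Part~(1) of Proposition~\ref{prop:settledsections} gives
\[
\frac{s_n(\alpha)}{2^n}=\frac12\left(\frac{s_{n-1}(\alpha_0)}{2^{n-1}}+\frac{s_{n-1}(\alpha_1)}{2^{n-1}}\right).
\]
Each summand in the parentheses lies in $[0,1]$ and converges, by the monotonicity remark after Definition~\ref{defn:settled}. Their average therefore tends to $1$ if and only if both limits are $1$.

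For strong settledness, $s_n(\alpha)=2^n$ holds if and only if both $s_{n-1}(\alpha_i)=2^{n-1}$. The converse direction needs a common level for $\alpha_0$ and $\alpha_1$, which we get by taking the maximum of the two witnessing levels. Since $s_{m+1}\geq 2s_m$, full stability at level $m$ persists at every level above $m$, so the maximum works.

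The only point requiring care is this persistence of full stability to higher levels, used to align levels. It follows immediately from the inequality $s_{n+1}\geq 2s_n$ established before Proposition~\ref{prop:conjsettled}.
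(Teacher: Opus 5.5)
Your proposal is correct and follows essentially the paper's route: the paper presents this as an immediate consequence of Proposition~\ref{prop:settledsections}, and the proof of Theorem~\ref{thm:descsettled} writes out the same normalized averaging identity for $s_n(\alpha)/2^n$ that you use. Your extra step of aligning witnessing levels for strong settledness via $s_{m+1}\geq 2s_m$ (including the level-$0$ case) fills in a detail the paper leaves implicit.
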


We next show that the number of stable cycles of an automorphism at any level cannot increase under taking squares.
In fact, a stronger statement holds: for any $\alpha \in \Omega$ and any $m\in \ZZ_2$, the element $\alpha$ is (strongly) settled if and only if $\alpha^m$ is (strongly) settled. A proof of this general statement can be found in \cite{CortezLukina}; here we provide a short proof of the special case needed for our purposes.

\begin{proposition}\label{prop:square}
For any $\alpha \in \Omega$ and $n\geq 1$, $2 s_{n}(\alpha) \leq s_{n+1}(\alpha^2) \leq s_{n+1}(\alpha)$.
Hence the element $\alpha$ is (strongly) settled if and only if $\alpha^2$ is (strongly) settled.
\end{proposition}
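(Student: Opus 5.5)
We argue directly with cycles, comparing the cycles of $\alpha$ and of $\alpha^2$ above a single cycle at a lower level. The basic fact is elementary: if $C$ is a cycle of $\alpha$ of length $k$ on some level, then $\alpha^2$ splits $C$ into two cycles of length $k/2$ when $k$ is even, and keeps $C$ as one cycle of length $k$ when $k$ is odd. Since all cycles of elements of $\Omega_n$ have $2$-power length, $k$ odd means $k=1$.

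For the upper bound $s_{n+1}(\alpha^2)\le s_{n+1}(\alpha)$, let $w\in V_{n+1}$ lie in a stable cycle of $\alpha^2$ of length $k$. Let $C$ be the $\alpha$-cycle of $w$, of length $k'\in\{k,2k\}$. For each $m>n+1$ the $\alpha^2$-orbits above the $\alpha^2$-cycle of $w$ form one cycle of length $2^{m-n-1}k$. The vertices above $C$ form $\alpha$-cycles of length at least twice their $\alpha^2$-cycle lengths, or equal if those lengths are $1$; but lengths $\ge 2$ occur for $m>n+1$.

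\begin{itemize}
\item If $k'=2k$, then $C$ is the union of two $\alpha^2$-cycles, and $\alpha$ maps the vertices above the first bijectively onto those above the second. Hence the $\alpha$-cycle through a vertex above $w$ has length twice its $\alpha^2$-length, namely $2^{m-n}k=2^{m-n-1}k'$. So the vertices above $C$ form a single cycle and $C$ is stable for $\alpha$.
\item If $k'=k$ (so $k=1$), the $\alpha^2$-cycle above is a single cycle of length $2^{m-n-1}$. The $\alpha$-cycle containing it has length at most the number of vertices, $2^{m-n-1}$, and contains it, so it equals it and is stable.
\end{itemize}

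In both cases $w$ lies in a stable cycle of $\alpha$.

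For the lower bound $2s_n(\alpha)\le s_{n+1}(\alpha^2)$, let $v\in V_n$ lie in a stable $\alpha$-cycle $C$ of length $k$. Above $C$ at level $m\ge n+1$, $\alpha$ has a single cycle of length $2^{m-n}k\ge 2$, so $\alpha^2$ splits it into exactly two cycles of length $2^{m-n-1}k$.

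\begin{itemize}
\item At level $n+1$ the two $\alpha^2$-cycles $D_1,D_2$ each have length $k$.
\item Above $D_1$, at level $m$, the vertices number $2^{m-n-1}k$. They are a union of $\alpha^2$-cycles, each contained in one of the two big $\alpha^2$-cycles of length $2^{m-n-1}k$.
\item The two big $\alpha^2$-cycles are interchanged by $\alpha$, which also swaps the fibres over $D_1$ and $D_2$ when $k$ is even. The subtle case is $k=1$: there the level-$(n+1)$ $\alpha$-cycle has length $2$, and $D_1,D_2$ are its two points, again interchanged by $\alpha$.
\item So each big $\alpha^2$-cycle lies entirely above $D_1$ or above $D_2$, and by counting, exactly one lies above each. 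Hence $D_1$ and $D_2$ are stable for $\alpha^2$.
\end{itemize}

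This gives $s_{n+1}(\alpha^2)\ge 2s_n(\alpha)$. The main care needed is this containment argument.

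Finally, dividing by $2^{n+1}$ gives
\[
\frac{s_n(\alpha)}{2^n}\le\frac{s_{n+1}(\alpha^2)}{2^{n+1}}\le\frac{s_{n+1}(\alpha)}{2^{n+1}},
\]
and both outer terms converge to the same limit, so $\alpha$ is settled iff $\alpha^2$ is. For strong settledness, $s_n(\alpha)=2^n$ forces $s_{n+1}(\alpha^2)=2^{n+1}$. Conversely, $s_{n+1}(\alpha^2)=2^{n+1}$ forces $s_{n+1}(\alpha)=2^{n+1}$.
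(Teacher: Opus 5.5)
Your proof is correct and follows the same approach as the paper: you get the lower bound by splitting each stable $\alpha$-cycle above level $n$ into two $\alpha^2$-cycles, and you supply the fibre-counting justification for their stability that the paper leaves implicit; you get the upper bound from the doubling of orbit lengths when passing from $\alpha^2$ to $\alpha$. Your case $k'=k=1$ is in fact vacuous, since if $\alpha$ fixed $w$ then $\alpha^2$ would fix both children of $w$; this is why the paper can simply conclude that the $\alpha$-cycle has length $2k$, but handling the case separately does no harm.
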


\begin{proof}
We first observe that the orbit of a vertex $w \in V$ always has size a power of $2$ and if the $\alpha^2$-orbit of $w$ has more than one element, then $\alpha$ orbit of $w$ has two times the size of the $\alpha^2$-orbit of $w$.

Let $v \in V_n$ be in a stable cycle of $\alpha$ of length $k$. The $\alpha$-orbit of the vertex $v_0 \in V_{n+1}$ lying above $v$ is given by 
\[ \{ (v_0)\alpha, (v_0)\alpha^2,\ldots, (v_0)\alpha^{2k}\}.
\]
Then the $\alpha^2$-orbit of $v_0$ decomposes as the union of two stable orbits:
\[ \{ (v_0)\alpha^2,(v_0)\alpha^4 \ldots, (v_0)\alpha^{2k}\} \text{ and } \{ (v_0)\alpha, (v_0)\alpha^3 \ldots, (v_0)\alpha^{2k-1}\}. \]
Hence, every vertex above $v$ at level $m\geq n+1$ is in a stable cycle of $\alpha^2$ and $2s_n(\alpha) \leq s_{n+1}(\alpha^2)$.

Assume now that $v \in V_n$ is in a stable cycle of $\alpha^2$ of length $k\geq 1$. Then for any $m\geq n+1$ and $v' \in V_{m}$ lying above $v$, the $\alpha^2$-orbit of $v'$ has length $2^{m-n}k$ which is $\geq 2$. Therefore, the $\alpha$-orbit of such $v'$ has length $2^{m-n+1}k$. Since the $\alpha$-orbit of $v$ contains at most $2k$ elements and the $\alpha$-orbit of $v'$ attains the maximum size, $v$ is in a stable cycle of $\alpha$ of length $2k$.

Hence, we find that for all $n\geq 1$, 
\[ 2s_n(\alpha) \leq s_{n+1}(\alpha^2) \leq s_{n+1}(\alpha). \] 
We prove the second claim by taking the limit.
 
\end{proof}

Next, we introduce the notion of a descendant and show that an automorphism is settled if and only if all of its descendants are settled.

\begin{definition}\label{defn:descendant}
    Let $\alpha=(\alpha_0,\alpha_1)\tau \in \Omega$. We define the set of first descendants of $\alpha$ by
   \begin{equation} 
   \Desc_1(\alpha):=\begin{cases} \{\alpha_0,\alpha_1\} & \text{ if }  \tau=\id \\
   \{\alpha_0\alpha_1\} &\text{ if } \tau\neq \id
    \end{cases} \end{equation}

Inductively, for $n \ge 1$, a first descendant of an element of $\Desc_{n-1}(\alpha)$ is called an $n$th descendant of $\alpha$. That is,

\begin{equation}
\Desc_n(\alpha)= \bigcup _{\beta \in \Desc_{n-1}(\alpha)}\Desc_1{(\beta)}
\end{equation}

\end{definition}

The following result will be used in the final section.
\begin{theorem}\label{thm:descsettled} Let $\alpha \in \Omega$. Then the following are equivalent.
\begin{enumerate}
   \item $\alpha$ is (strongly) settled. 
   \item  For all $n\geq 1$, all $n$th descendants of $\alpha$ are (strongly) settled.
    \item There exists $n \ge 1$ such that all $n$th descendants of $\alpha$ are (strongly) settled.
   \end{enumerate}
\end{theorem}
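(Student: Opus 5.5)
The plan is to reduce everything to the case $n=1$, using Corollary~\ref{cor:descsettled} together with the conjugacy invariance of Proposition~\ref{prop:conjsettled}. The key claim is that for every $\alpha \in \Omega$, $\alpha$ is (strongly) settled if and only if every element of $\Desc_1(\alpha)$ is (strongly) settled.

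To prove the claim, write $\alpha=(\alpha_0,\alpha_1)\tau$ and split by $\tau$.
\begin{itemize}
\item If $\tau=\id$, then $\Desc_1(\alpha)=\{\alpha_0,\alpha_1\}$, and the claim is exactly the second half of Corollary~\ref{cor:descsettled}.
\item If $\tau\neq\id$, then $\Desc_1(\alpha)=\{\alpha_0\alpha_1\}$. By Proposition~\ref{prop:conj}(2), $\alpha$ is conjugate to $(\alpha_0\alpha_1,\id)\sigma$. By Proposition~\ref{prop:conjsettled}, $\alpha$ is (strongly) settled if and only if $(\alpha_0\alpha_1,\id)\sigma$ is. By the first half of Corollary~\ref{cor:descsettled}, this holds if and only if $\alpha_0\alpha_1$ is.
\end{itemize}

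For the strongly settled version one has to be a little careful, since Corollary~\ref{cor:descsettled} is stated for both notions. Its justification comes from Proposition~\ref{prop:settledsections}. For settled, divide the identities $s_n(\alpha)=s_{n-1}(\alpha_0)+s_{n-1}(\alpha_1)$ and $s_n(\alpha)=2s_{n-1}(\alpha_0\alpha_1)$ by $2^n$ and pass to the limit. Since each ratio $s_m/2^m$ is at most $1$, the average of two such ratios tends to $1$ exactly when both do. For strongly settled, note that $s_n(\beta)=2^n$ for some $n$ is preserved under both identities. Moreover, once it holds at level $n$ it holds at all higher levels, because $s_{n+1}\ge 2s_n$. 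This lets us choose a common level for both $\alpha_0$ and $\alpha_1$.

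Now induct on $n$ using the recursive definition $\Desc_n(\alpha)=\bigcup_{\beta\in\Desc_{n-1}(\alpha)}\Desc_1(\beta)$. The induction shows that for each $n$, $\alpha$ is (strongly) settled if and only if all elements of $\Desc_n(\alpha)$ are (strongly) settled. In the inductive step, all $n$th descendants are settled if and only if, for every $\beta\in\Desc_{n-1}(\alpha)$, all of $\Desc_1(\beta)$ is settled. By the claim this happens if and only if every $\beta\in \Desc_{n-1}(\alpha)$ is settled, and by the inductive hypothesis if and only if $\alpha$ is. This single equivalence for each $n$ gives (1)$\Rightarrow$(2), and (3)$\Rightarrow$(1). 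The implication (2)$\Rightarrow$(3) is trivial.

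The only delicate point is the bookkeeping in the strongly settled case: the level at which full stability is reached must be uniform across the finitely many descendants. The monotonicity of $s_n/2^n$ handles this by taking the maximum of the levels, since $\Desc_n(\alpha)$ has at most $2^n$ elements.
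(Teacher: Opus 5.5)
Your proof is correct and follows essentially the same route as the paper. Both arguments first establish the equivalence for first descendants, using Proposition~\ref{prop:settledsections}, Proposition~\ref{prop:conj}(2), Proposition~\ref{prop:conjsettled} and Corollary~\ref{cor:descsettled}, and then induct on $n$ through the recursive definition of $\Desc_n$. Your explicit use of $s_{n+1}\ge 2s_n$ to pass to a common level in the strongly settled case is a small piece of bookkeeping that the paper's proof leaves implicit.
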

\begin{proof}
If $\alpha = (\alpha_0, \alpha_1)$, then by Proposition~\ref{prop:settledsections},
\[
\frac{s_n(\alpha)}{2^n} =  \frac{1}{2} \left(\frac{s_{n-1}(\alpha_0)}{2^{n-1}}  + \frac{s_{n-1}(\alpha_1)}{2^{n-1}}\right).
\]
Since $s_n(\alpha)/2^n$ is bounded above by $1$, 
$$\lim_{n\to \infty} \frac{s_n(\alpha)}{2^n} = 1 \text{ if and only if } \lim_{n\to \infty} \frac{s_{n-1}(\alpha_0)}{2^{n-1}}=1 \text{ and } \lim_{n\to \infty} \frac{s_{n-1}(\alpha_1)}{2^{n-1}}=1.$$
In particular, for $n\geq 1$, $s_{n}(\alpha)/2^n=1$ if and only if $s_{n-1}(\alpha_0)/2^{n-1}=1$ and $s_{n-1}(\alpha_1)/2^{n-1}=1$.

If $\alpha = (\alpha_0, \alpha_1)\sigma$, then it is conjugate to $(\alpha_0 \alpha_1, \id)\sigma$. By Proposition~\ref{prop:conjsettled}, we may assume that $\alpha=(\alpha_0\alpha_1,\id)\sigma$. By Corollary~\ref{cor:descsettled}, $\alpha$ is (strongly) settled if and only if $\alpha_0\alpha_1$ is (strongly) settled. Hence (1) implies (2), which follows by induction on $n\geq 1$. (2) implies (3) is trivial. 

Now, we show that (3) implies (1). Assume there exists $n\geq 1$ such that all $n$th descendants of $\alpha$ are settled. As we discussed above Corollary~\ref{cor:descsettled} shows that if the first descendants of $\alpha$ are settled, then $\alpha$ is settled. The general statement follows by applying this principle to every $m$th descendant of $\alpha$ for $m\leq n$. 

\end{proof}


\section{The settled elements in the normalizer of the standard odometer}
In this section, we describe the elements in the normalizer of an odometer. Since any two odometers are conjugate, their normalizers are also conjugate in $\Omega$. Therefore, we work with a fixed odometer $\gamma \in \Omega$.

\subsection{The normalizer of an odometer}

Let $\gamma=(\gamma,\id)\sigma \in\Omega$ be the standard odometer and, for each
$k\in\ZZ_2^\times$, the element $z_k\in\Omega$ recursively by
\[
z_k:=(z_k,\gamma^\ell z_k).
\]
where $\ell=\frac{k-1}{2}$. Note that $v_2(k)=0$ implies $\ell \in \ZZ_2$. 
\begin{lemma}
    For any $k\in \ZZ_2^{\times}$, $z_k \gamma z_k^{-1}=\gamma^k$.
\end{lemma}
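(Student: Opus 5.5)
The plan is to prove the equivalent identity $z_k\gamma=\gamma^k z_k$ by induction on the level $n$. Concretely, I will show that $(z_k\gamma)\restr{T_n}=(\gamma^k z_k)\restr{T_n}$ for every $n\geq 1$, and then conclude because two automorphisms agreeing on every $T_n$ are equal. First I note that $z_k$ is well defined. Its restriction to $T_n$ is determined by $z_k\restr{T_{n-1}}$ and $\gamma^\ell\restr{T_{n-1}}$ via $z_k\restr{T_n}=(z_k\restr{T_{n-1}},\gamma^\ell z_k\restr{T_{n-1}})$. These restrictions are compatible under the projections $\pi_{m,n}$, so they define an element of $\Omega=\varprojlim \Omega_n$. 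In particular $z_k$ acts trivially on level $1$.

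Next I compute the wreath recursion of $\gamma^k$. From the group law \eqref{semi}, $\gamma^2=(\gamma,\id)\sigma(\gamma,\id)\sigma=(\gamma,\gamma)$. Hence $\gamma^{2m}=(\gamma^m,\gamma^m)$ for all integers $m$. Since the embedding $\Omega\times\Omega\hookrightarrow\Omega$ is continuous and $\gamma^{\ell}$ is a limit of $\gamma^{\ell_n}$ with $\ell_n\in\ZZ$, this extends to $\gamma^{2\ell}=(\gamma^\ell,\gamma^\ell)$ for $\ell\in\ZZ_2$. It is also fine to work only at a finite level with the truncation $\ell_n$. Writing $k=2\ell+1$, I get
\[
\gamma^k=\gamma^{2\ell}\gamma=(\gamma^\ell,\gamma^\ell)(\gamma,\id)\sigma=(\gamma^{\ell+1},\gamma^\ell)\sigma .
\]

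Now I compute both sides of the identity using \eqref{semi}:
\[
z_k\gamma=(z_k,\gamma^\ell z_k)(\gamma,\id)\sigma=(z_k\gamma,\ \gamma^\ell z_k)\sigma,
\]
\[
\gamma^k z_k=(\gamma^{\ell+1},\gamma^\ell)\sigma\,(z_k,\gamma^\ell z_k)=(\gamma^{\ell+1}\gamma^\ell z_k,\ \gamma^\ell z_k)\sigma=(\gamma^k z_k,\ \gamma^\ell z_k)\sigma .
\]
The second coordinates and the permutation parts agree identically. The first coordinates are $z_k\gamma$ and $\gamma^k z_k$, which is the same identity one level down. So if $z_k\gamma$ and $\gamma^kz_k$ agree on $T_{n-1}$, they agree on $T_n$. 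For the base case $n=1$, both sides act as $\sigma$, because $z_k$ is trivial on level $1$ and $k$ is odd. Induction then gives the identity on every $T_n$, and therefore $z_k\gamma z_k^{-1}=\gamma^k$.

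The only delicate point is the bookkeeping of the semidirect-product convention, since permutations act on the right and the product $\sigma(b_0,b_1)$ equals $(b_1,b_0)\sigma$. Getting this order wrong would produce $\gamma^\ell z_k\gamma^{\ell+1}$ in the wrong slot. I will double-check it against the example $\gamma=(\gamma,\id)\sigma$ computed earlier. A secondary point is justifying $\gamma^{2\ell}=(\gamma^\ell,\gamma^\ell)$ for $2$-adic $\ell$. I will handle this by working at level $n$ with an integer representative of $\ell$ modulo $2^{n}$, so that no continuity argument is needed.
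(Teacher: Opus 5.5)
Your proof is correct and matches the paper's approach: a level-by-level induction using the wreath recursions of $z_k$ and $\gamma$, which reduces the identity on $T_n$ to the same identity on $T_{n-1}$ together with $\gamma^k=(\gamma^{\ell+1},\gamma^{\ell})\sigma$. The differences are cosmetic: you verify $z_k\gamma=\gamma^k z_k$ rather than computing $z_k\gamma z_k^{-1}$ directly, which avoids inverses, and you justify the recursion for $\gamma^k$ that the paper uses without comment.
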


\begin{proof} 
It is enough to prove that $$z_k  \gamma z_k^{-1} \restr{T_n}=\gamma^k \restr{T_n}$$ for all $n\geq 1$.
We prove the statement by induction on $n$:

For $n=1$, it is immediate that  $z_k\gamma z_k^{-1}\restr{T_1}= \sigma =\gamma ^k\restr{T_1}$.
For $n\geq 1$, 
 $$\begin {array}{lll}
 
 {z_k\gamma z_k^{-1}}\restr{T_n} & = (z_k, \gamma ^{\ell} z_k)(\gamma, \id)\sigma ({z_k}^{-1},(\gamma ^{\ell} z_k)^{-1}) \restr{T_n}\\
  & =(z_k\gamma {z_k}^{-1}(\gamma ^{\ell})^{-1} \restr {T_{n-1}}, \gamma ^{\ell}\restr {T_{n-1}}) \sigma\\
   & =(\gamma ^{k-\ell} \restr {T_{n-1}}, \gamma ^{\ell}\restr {T_{n-1}}) \sigma\\
   & =(\gamma ^{\frac{k+1}{2}} \restr {T_{n-1}}, \gamma ^{\frac{k-1}{2}}\restr {T_{n-1}}) \sigma\\
   & =\gamma ^{k} \restr {T_{n}}. 
 \end{array}$$
\end{proof}

\begin{lemma}\label{lem:signzk}
Let $k\in\ZZ_2^{\times}$. Let $\ell=\frac{k-1}{2}$.
\begin{enumerate}
\item If $v_2(\ell)\ge 1$, then $\sgn_n(z_k)=1$ for all $n\ge 1$.
\item If $v_2(\ell)=0$, then $\sgn_n(z_k)=-1$ for all $n\geq 2$
and $\sgn_1(z_k)=1$.
\end{enumerate}
\end{lemma}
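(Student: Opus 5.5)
We compute $\sgn_n(z_k)$ by induction on $n$, using the recursion $z_k=(z_k,\gamma^\ell z_k)$ together with Remark~\ref{sgndecomp} and Proposition~\ref{prop:odosgn}. The level-one case is immediate: $z_k$ acts trivially on the first level, so $\sgn_1(z_k)=1$ in both cases.

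For $n\ge 2$, Remark~\ref{sgndecomp} applied to $z_k=(z_k,\gamma^\ell z_k)$ gives
\[
\sgn_n(z_k)=\sgn_{n-1}(z_k)\,\sgn_{n-1}(\gamma^\ell z_k)=\sgn_{n-1}(z_k)^2\,\sgn_{n-1}(\gamma^\ell)=\sgn_{n-1}(\gamma)^{\ell}.
\]
Here we use that $\sgn_{n-1}$ is a homomorphism to $\{\pm1\}$, so squares drop out. To make sense of $\sgn_{n-1}(\gamma^\ell)$ for $\ell\in\ZZ_2$, we use that $\gamma^\ell$ agrees on level $n-1$ with $\gamma^{\ell_{n-1}}$ for the truncation $\ell_{n-1}\in\ZZ_{\ge0}$. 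Hence $\sgn_{n-1}(\gamma^\ell)=\sgn_{n-1}(\gamma)^{\ell_{n-1}}$, and the parity of $\ell_{n-1}$ equals the parity of $\ell$ (the digit $a_0$) as long as $n-1\ge 1$. Since $\gamma$ is an odometer, $\sgn_{n-1}(\gamma)=-1$ for all $n-1\ge1$ by Proposition~\ref{prop:odosgn}.

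Therefore $\sgn_n(z_k)=(-1)^{\ell}$ for all $n\ge 2$. This equals $1$ when $v_2(\ell)\ge1$ and $-1$ when $v_2(\ell)=0$, which together with the level-one computation gives both parts. No genuine induction is even needed, since the recursion removes $z_k$ entirely. The only delicate point is interpreting the $2$-adic power $\gamma^\ell$ at a finite level, which the truncation remark handles.
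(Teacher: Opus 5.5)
Your proof is correct and is essentially the paper's argument: $\sgn_1(z_k)=1$ because $z_k\restr{T_1}=\id$, and for $n\ge 2$ you apply Remark~\ref{sgndecomp} to $z_k=(z_k,\gamma^\ell z_k)$ to reduce to $\sgn_{n-1}(\gamma^\ell)=(-1)^{\ell}$. Your truncation-and-parity justification of $\sgn_{n-1}(\gamma^\ell)$ for a $2$-adic exponent $\ell$ spells out a step that the paper only asserts.
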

\begin{proof}
For $n=1$, $\sgn_1((z_k,\gamma^{\ell}z_k))=\sgn(\id)=1$. For $n\geq 2$, it follows from the formula given in Remark~\ref{sgndecomp}.
\begin{equation}
 \begin{split}
    \sgn_n(z_k)&=\sgn_n((z_k,\gamma^{\ell}z_k))=\sgn_{n-1}(z_k)^2\sgn_{n-1}(\gamma^{\ell})\\
               &=\sgn_{n-1}(\gamma^{\ell}) \\
    \end{split}
\end{equation}  
If $v_2(\ell)\geq 1$, then $\sgn_n(\gamma^{\ell})=1$ for all $n\geq 2$ and if $v_2(\ell)=0$, then $\sgn_n(\gamma^{\ell})=-1$ for all $n\geq 2$.
\end{proof}

\begin{remark}\label{rk:odosettled}
If $\alpha\in\Omega$ is an odometer, then by definition it acts as a
$2^n$-cycle on each level. Thus $\alpha$ is strongly settled, and every
vertex at level $n\geq 0$ belongs to a stable cycle of $\alpha$. 

For any $k\in\ZZ_2^{\times}$,
\[
z_k^2=(z_k^2,\gamma^\ell z_k\,\gamma^\ell z_k)
       =(z_k^2,\gamma^{\ell(k+1)}z_k^2).
\]
Since $\ell(k+1)=\frac{k^2-1}{2}$, we find
$z_k^2=z_{k^2}$.
\end{remark}

\begin{lemma}\label{lem:zkcongruence}
Let $n\geq 1$ and let $k,k'\in \ZZ_2^\times$ with $v_2(k-k') \geq n$. Then
\[
z_k\restr{T_n} = z_{k'}\restr{T_n}.
\]
\end{lemma}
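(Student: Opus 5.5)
We argue by induction on $n$, using the recursion $z_k=(z_k,\gamma^{\ell}z_k)$ with $\ell=\frac{k-1}{2}$, and similarly $z_{k'}=(z_{k'},\gamma^{\ell'}z_{k'})$ with $\ell'=\frac{k'-1}{2}$. For $n=1$ there is nothing to prove: both $z_k$ and $z_{k'}$ act trivially on the first level, since they are of the form $(\cdot,\cdot)$ with trivial permutation part. So $z_k\restr{T_1}=\id=z_{k'}\restr{T_1}$ for all $k,k'\in\ZZ_2^\times$.

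For the inductive step, let $n\ge 2$ and suppose $v_2(k-k')\ge n$. Restricting the wreath recursion gives
\[
z_k\restr{T_n}=\bigl(z_k\restr{T_{n-1}},\ (\gamma^{\ell}z_k)\restr{T_{n-1}}\bigr),
\]
and similarly for $k'$. Since restriction $\pi_{n-1}$ is a homomorphism, it suffices to show two equalities on $T_{n-1}$. The first is $z_k\restr{T_{n-1}}=z_{k'}\restr{T_{n-1}}$, which is the induction hypothesis, because $v_2(k-k')\ge n\ge n-1$. The second is $\gamma^{\ell}\restr{T_{n-1}}=\gamma^{\ell'}\restr{T_{n-1}}$. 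For this, note $\ell-\ell'=\frac{k-k'}{2}$, so $v_2(\ell-\ell')\ge n-1$. Since $\gamma\restr{T_{n-1}}$ has order $2^{n-1}$ (it is an odometer, or by the remark on $2$-adic truncations in Section 2), exponents congruent modulo $2^{n-1}$ give the same restriction. Hence $\gamma^{\ell}\restr{T_{n-1}}=\gamma^{\ell'}\restr{T_{n-1}}$, which completes the induction.

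The only point needing care is that $z_k$ is defined implicitly by a recursion in which it appears on both sides. One should justify that its restriction to $T_n$ is determined level by level via $z_k\restr{T_n}=(z_k\restr{T_{n-1}},\gamma^{\ell}\restr{T_{n-1}}z_k\restr{T_{n-1}})$, starting from $z_k\restr{T_1}=\id$. This is exactly how the recursive definition is interpreted, as with $\gamma$, so the induction is really on this defining recursion. There is no substantial obstacle beyond the observation that the hypothesis $v_2(k-k')\ge n$ loses exactly one power of $2$ when passing from $k$ to $\ell$, matching the drop from $T_n$ to $T_{n-1}$.
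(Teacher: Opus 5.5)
Your proof is correct and is essentially the paper's argument: induction on $n$ with base case $z_k\restr{T_1}=\id$. The inductive step combines the induction hypothesis on the first coordinate with $\gamma^{\ell}\restr{T_{n-1}}=\gamma^{\ell'}\restr{T_{n-1}}$, which follows from $v_2(\ell-\ell')=v_2(k-k')-1\ge n-1$ and $\gamma\restr{T_{n-1}}$ having order $2^{n-1}$.
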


\begin{proof}
We prove it by induction on $n$.

By definition, $z_k\restr{T_1}=z_{k'}\restr{T_1}=\id$. Suppose the statement holds for some $n\geq1$, and let 
$v_2(k-k')\geq n+1$. Hence, the inductive hypothesis gives
\[
z_k\restr{T_n} = z_{k'}\restr{T_n}. \tag{$\ast$}
\]
Since $k,k'$ are odd and $v_2(k-k')\geq n+1$, we have
\[
v_2((\frac{k-1}{2})-(\frac{k'-1}{2})) \geq n.
\]
As $\gamma\restr{T_n}$ generates a cyclic group of order $2^n$, this congruence gives
\[
\gamma^{(k-1)/2}\restr{T_n} = \gamma^{(k'-1)/2}\restr{T_n}. \tag{$\ast\ast$}
\]
Combining $(\ast)$ and $(\ast\ast)$,
\[ \tag{$\ast\ast\ast$}
(\gamma^{(k-1)/2}z_k)\restr{T_n}
= (\gamma^{(k'-1)/2}z_{k'})\restr{T_n}.
\]
Since $z_k=(z_k,\gamma^{(k-1)/2}z_k)$ and $z_{k'}=(z_{k'},\gamma^{(k'-1)/2}z_{k'})$, using $(\ast)$ and $(\ast\ast\ast)$, we show that $z_k\restr{T_{n+1}}=z_{k'}\restr{T_{n+1}}$, completing the induction.
\end{proof}

\begin{proposition}\label{normalizer of gamma}
The normalizer of the subgroup (topologically) generated by $\gamma$ is
\[ \{ \gamma^mz_k \mid m\in \ZZ_2 \text{ and } k \in \ZZ_2^{\times} \}. \]
\end{proposition}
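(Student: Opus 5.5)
The plan is to prove the two inclusions separately, writing $N$ for the normalizer of $\langle\langle\gamma\rangle\rangle$ in $\Omega$.

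\emph{The set is contained in $N$.} By the previous lemma, $z_k\gamma z_k^{-1}=\gamma^k$. Since conjugation by $z_k$ is a continuous automorphism of $\Omega$, it sends $\gamma^m$ to $\gamma^{km}$ for every $m\in\ZZ_2$. Hence $z_k\langle\langle\gamma\rangle\rangle z_k^{-1}=\langle\langle\gamma^k\rangle\rangle$. This equals $\langle\langle\gamma\rangle\rangle$, because $k$ is a unit and so $\gamma=(\gamma^k)^{k^{-1}}$. Clearly $\gamma^m$ normalizes $\langle\langle\gamma\rangle\rangle$. Therefore every $\gamma^m z_k$ lies in $N$.

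\emph{Conversely, every $g\in N$ has this form.} Let $g\in N$. Then $g\gamma g^{-1}\in\langle\langle\gamma\rangle\rangle$, say $g\gamma g^{-1}=\gamma^k$ with $k\in\ZZ_2$. Since $g\gamma g^{-1}$ is a conjugate of an odometer, it is an odometer. By the example following Definition~\ref{defn:settled}, if $v_2(k)\ge1$ then $\gamma^k$ has several cycles on level $n$ for large $n$, so it is not an odometer. Hence $k\in\ZZ_2^\times$.

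Put $h=z_k^{-1}g$. Then $h\gamma h^{-1}=z_k^{-1}\gamma^k z_k=\gamma$, because $z_k\gamma z_k^{-1}=\gamma^k$ gives $z_k^{-1}\gamma^k z_k=\gamma$. So $h$ lies in the centralizer $C$ of $\gamma$. It therefore suffices to show $C=\langle\langle\gamma\rangle\rangle$. Granting this, $g=z_k\gamma^m=\gamma^{km}z_k$, which is of the required form.

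\emph{The centralizer of $\gamma$.} Consider level $n$, where $\gamma\restr{T_n}$ is a $2^n$-cycle on $V_n$. The centralizer of a full cycle in $S_{2^n}$ is the cyclic group it generates. Hence $h\restr{T_n}=\gamma^{m_n}\restr{T_n}$ for some $m_n\in\ZZ/2^n$. These $m_n$ are compatible under the projections $\pi_{n+1,n}$, since $\gamma\restr{T_n}$ has order exactly $2^n$. They therefore define some $m\in\ZZ_2$ with $h=\gamma^m$. Alternatively, one can argue that $\langle\langle\gamma\rangle\rangle$ is closed and $h$ lies in its closure.

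The main obstacle is keeping the left/right action conventions straight when converting $z_k^{-1}g\in C$ into the stated ordering $\gamma^m z_k$. This is handled by the identity $z_k\gamma^m=\gamma^{km}z_k$ together with the fact that $m\mapsto km$ is a bijection of $\ZZ_2$.
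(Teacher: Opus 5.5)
Your proof is correct and follows the same route as the paper: the conjugation exponent $g\gamma g^{-1}=\gamma^k$ gives a map from the normalizer to $\ZZ_2^{\times}$, its kernel is the centralizer, and the $z_k$ supply a preimage for every $k$. The centralizer equals $\langle\langle\gamma\rangle\rangle$ because a $2^n$-cycle has cyclic centralizer in $\text{Sym}(2^n)$ at every level; you additionally make explicit the easy inclusion, why $k$ must be a unit, and the compatibility of the exponents $m_n$, all of which the paper leaves implicit.
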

\begin{proof}
An automorphism of the closed group $\langle\langle \gamma \rangle\rangle$ takes $\gamma$ to $\gamma^k$ for some $k \in \ZZ_2^{\times}$. Hence we have a surjective  map from the normalizer of this group in $\Omega$ to $\ZZ_2^{\times}$. Since, for every $n$,
$\gamma|_{T_n}$ is a $2^n$-cycle as a permutation in $\text{Sym}(2^n)$, its centralizer is the cyclic subgroup it generates in $\text{Sym}(2^n)$, hence also in $\Omega_n$. Passing to the inverse limit, the centralizer of
$\langle\langle\gamma\rangle\rangle$ in $\Omega$ is
$\langle\langle\gamma\rangle\rangle$ itself.

For all $k\in \ZZ_2^{\times}$, the elements $z_k$ are in the normalizer, hence the normalizer is  $\{ \gamma^mz_k \mid m\in \ZZ_2 \text{ and } k \in \ZZ_2^{\times} \}.$
\end{proof}

We now analyze the settled structure of every element $\alpha \in N(\gamma)$.

\begin{definition}
An element $\alpha \in \Omega$ is \emph{uniformly settled} if there exists a constant $C > 0$ such that, for all $n \geq 1$, there are at most $C$ vertices at level $n$ that do not lie in a stable cycle of $\alpha$.
\end{definition}

We will show that every settled element of $N(\gamma)$ is uniformly
settled. The analysis splits into the two cases
$v_2(\frac{k-1}{2})=0$ and $v_2(\frac{k-1}{2})\geq 1$.
From now on, set $\ell=\frac{k-1}{2}$.

We use Lemma~\ref{mkidentitylevel} to prove Lemma~\ref{kidentitylevel}. 

\begin{lemma}\label{mkidentitylevel}
    Let $k\in \ZZ_2^{\times}$ and $m\in \ZZ_2$. If $v_2(m)\leq v_2(\ell)$, then the automorphism $\gamma^{m}z_k \restr{T_n}=\id$, for all $n\leq v_2(m)$.
\end{lemma}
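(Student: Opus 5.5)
We argue by induction on $n$, proving the slightly more uniform claim: for every $k\in\ZZ_2^{\times}$ and every $m\in\ZZ_2$ with $v_2(m)\le v_2(\ell)$, where $\ell=\frac{k-1}{2}$, the restriction $\gamma^m z_k\restr{T_n}$ is trivial for all $n\le v_2(m)$. If $v_2(m)=0$ there is nothing to prove beyond the empty range (or $n=0$, which is trivial), so the content lies in the case $v_2(m)\ge 1$.

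The key computation is the wreath recursion of $\gamma^m z_k$ when $m$ is even. Since $\gamma^2=(\gamma,\gamma)$, for $m=2m'$ we have $\gamma^m=(\gamma^{m'},\gamma^{m'})$. This identity holds for $m'\in\ZZ$ and extends to $m'\in\ZZ_2$ by continuity, using the truncations $k_n$ and the fact that level-$n$ restrictions depend only on $m$ modulo $2^n$. Combining this with $z_k=(z_k,\gamma^{\ell}z_k)$ and the group law \eqref{semi} gives
\[
\gamma^m z_k=(\gamma^{m'}z_k,\ \gamma^{m'+\ell}z_k).
\]
This element acts trivially on level $1$, and its level-$n$ restriction is trivial exactly when both sections are trivial on $T_{n-1}$.

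For the induction step, take $v_2(m)=v\ge 1$ with $v\le v_2(\ell)$, and let $1\le n\le v$. The first section $\gamma^{m'}z_k$ has $v_2(m')=v-1\le v_2(\ell)$, so the induction hypothesis gives that it is trivial on $T_{n-1}$, since $n-1\le v-1$. For the second section, the exponent is $m'+\ell$. Because $v_2(m')=v-1<v\le v_2(\ell)$, we get $v_2(m'+\ell)=v-1$ and again $v_2(m'+\ell)\le v_2(\ell)$. So the hypothesis applies with exponent $m'+\ell$ and the same $k$, and this section is also trivial on $T_{n-1}$. Hence $\gamma^m z_k\restr{T_n}=\id$. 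The induction is on $v=v_2(m)$, with $k$ held fixed and the claim proved for all admissible $m$ at once.

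The only delicate points are the following.
\begin{itemize}
\item The valuation bookkeeping, in particular the strict inequality $v-1<v_2(\ell)$, is what keeps the second section's exponent inside the admissible range.
\item The case $v_2(\ell)=\infty$, i.e.\ $k=1$, needs a remark. Then $z_1=(z_1,z_1)=\id$, and the claim reduces to $\gamma^m$ being trivial on $T_n$ for $n\le v_2(m)$. This is immediate because $\gamma\restr{T_n}$ has order $2^n$.
\item The extension of $\gamma^{2m'}=(\gamma^{m'},\gamma^{m'})$ to $2$-adic exponents must be justified, which is the continuity argument above.
\end{itemize}
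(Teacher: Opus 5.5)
Your proof is correct and is essentially the paper's argument: both use the recursion $\gamma^m z_k=(\gamma^{m/2}z_k,\ \gamma^{m/2+\ell}z_k)$ and the fact that $v_2(\tfrac{m}{2}+\ell)=v_2(m)-1\le v_2(\ell)$, which sends both sections back into the induction. Whether you index the induction by $n$, as the paper does, or by $v_2(m)$ makes no difference. Your side remarks on $k=1$ (the only case in which $m=0$ can occur) and on extending $\gamma^{2m'}=(\gamma^{m'},\gamma^{m'})$ to $m'\in\ZZ_2$ just spell out details the paper uses without comment.
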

\begin{proof}
We'll prove it by induction on $n$. If $v_2(m)=0$ the claim is immediate. Assume $v_2(m)\ge 1$ and let
$n\le v_2(m)$. Then
\[
\gamma^m z_k\restr{T_n}
  =(\gamma^{\frac{m}{2}}z_k\restr{T_{n-1}},\,
    \gamma^{\frac{m}{2}+\ell}z_k\restr{T_{n-1}}).
\]
Since $v_2(\frac{m}{2})=v_2(m)-1<v_2(m)\le v_2(\ell)$,
we have $v_2(\frac{m}{2}+\ell)=v_2(m)-1$. The induction hypothesis gives that
both sections act trivially on $T_{n-1}$, hence
$\gamma^m z_k\restr{T_n}=\id$.
\end{proof}

\begin{lemma}\label{kidentitylevel}
    The automorphism $z_k \restr{T_n}$ is the identity for all $n\leq v_2(\ell)+1$.
\end{lemma}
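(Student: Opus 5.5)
We want to show that $z_k\restr{T_n}=\id$ for every $n\le v_2(\ell)+1$, where $\ell=\frac{k-1}{2}$. The plan is to peel off one level using the defining recursion $z_k=(z_k,\gamma^{\ell}z_k)$, and then apply Lemma~\ref{mkidentitylevel} to the two sections with $m=0$ and $m=\ell$.

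Because $z_k$ acts trivially on level one, we have $z_k\restr{T_1}=\id$. Hence for $n\ge 2$,
\[
z_k\restr{T_n}=(z_k\restr{T_{n-1}},\,\gamma^{\ell}z_k\restr{T_{n-1}}),
\]
so it suffices to prove that both $z_k$ and $\gamma^{\ell}z_k$ act trivially on $T_{n-1}$ whenever $n-1\le v_2(\ell)$.

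For the second section, take $m=\ell$ in Lemma~\ref{mkidentitylevel}. The hypothesis $v_2(m)\le v_2(\ell)$ holds with equality, so $\gamma^{\ell}z_k\restr{T_{j}}=\id$ for all $j\le v_2(\ell)$. In particular this holds for $j=n-1$.

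For the first section, the natural choice is $m=0$. Since $v_2(0)=\infty$, the hypothesis $v_2(m)\le v_2(\ell)$ fails unless $\ell=0$, so instead I will run a direct induction on $j$ to show $z_k\restr{T_j}=\id$ for $j\le v_2(\ell)+1$. The base case $j=1$ holds. For the inductive step with $j+1\le v_2(\ell)+1$, the recursion gives
\[
z_k\restr{T_{j+1}}=(z_k\restr{T_j},\,\gamma^{\ell}z_k\restr{T_j}).
\]
The first entry is trivial by the inductive hypothesis. The second is trivial by Lemma~\ref{mkidentitylevel} applied with $m=\ell$, since $j\le v_2(\ell)$.

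The case $\ell=0$, i.e.\ $k=1$, needs a separate word. Then $z_1=(z_1,z_1)$, so $z_1=\id$, and the claim is trivial. This case is consistent with reading $v_2(\ell)=\infty$.

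I expect no real obstacle here. The only point needing care is that $m=0$ cannot be fed directly into Lemma~\ref{mkidentitylevel}, which is why the induction on the first coordinate is needed.
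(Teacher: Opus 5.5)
Your proposal is correct and is essentially the paper's argument: you induct on the level using the recursion $z_k=(z_k,\gamma^{\ell}z_k)$, handle the first coordinate by the inductive hypothesis, and handle the second by Lemma~\ref{mkidentitylevel} with $m=\ell$. Your separate remark on $k=1$, where $z_1=\id$, is a harmless addition that the paper leaves implicit.
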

\begin{proof}
    For $n=1$, the statement holds trivially since $z_k\restr{T_1}=\id$. Let $n\leq v_2(\ell)+1$ and assume that $z_k\restr{T_{n-1}}=\id$.

    We know that $$z_k \restr{T_n}=(z_k\restr{T_{n-1}}, \gamma^{\ell}z_k\restr{T_{n-1}}).$$
 By Lemma~\ref{mkidentitylevel}, $\gamma^{\ell}z_k \restr{T_{v_2(\ell)}}=\id$. Combining this with our hypothesis, $z_k\restr{T_n}=\id$.
\end{proof}

We will prove the following theorem in a sequence of propositions.
\begin{theorem}\label{thm:classificationzk}
Let $m\in \ZZ_2$ and $k \neq \pm 1 \in \ZZ_2^{\times}$.
\begin{enumerate}
    \item If $v_2(\ell) \geq 1$, then $\gamma^m z_k$ is uniformly settled and $$s_n(\gamma^m z_k) \geq 2^n-2^{v_2(\ell)}$$ for all $n\geq v_2(\ell)$.
    \item If $v_2(\ell)=0$, then $\gamma^m z_k$ is uniformly settled and $$s_n(\gamma^m z_k) \geq 2^n-2^{v_2(k+1)}$$ for all $n\geq v_2(k+1)$.
\end{enumerate} 
\end{theorem}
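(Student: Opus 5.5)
The plan is to compute the first descendants of $\alpha=\gamma^m z_k$ and show that they stay inside the family $\{\gamma^{m'}z_{k'}\}$. The fully settled pieces are then detected with the sign criterion of Proposition~\ref{prop:odosgn}, and the bad vertices are counted via Proposition~\ref{prop:settledsections}. Case (2) will be reduced to case (1) by squaring.

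\emph{Recursion for $\gamma^m z_k$.} Since $z_k=(z_k,\gamma^\ell z_k)$ and $\gamma=(\gamma,\id)\sigma$, we get
\[
\gamma^{m}=(\gamma^{m/2},\gamma^{m/2})\ \text{ for $m$ even},\qquad
\gamma^{m}=(\gamma^{(m+1)/2},\gamma^{(m-1)/2})\sigma\ \text{ for $m$ odd}.
\]
Hence for $m$ even
\[
\gamma^m z_k=(\gamma^{m/2}z_k,\ \gamma^{m/2+\ell}z_k),
\]
so the two first descendants are again of the form $\gamma^{m'}z_k$ with the \emph{same} $k$. For $m$ odd the element lies outside the level-one stabilizer. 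Its single first descendant is the product of its two sections, and it can be rewritten as $\gamma^{m''}z_k^2=\gamma^{m''}z_{k^2}$ using $z_k\gamma^a=\gamma^{ka}z_k$ and Remark~\ref{rk:odosettled}.

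\emph{Case (1), $v:=v_2(\ell)\ge1$.} By Lemma~\ref{lem:signzk}(1) we have $\sgn_n(z_k)=1$ for all $n$, so $\sgn_n(\gamma^mz_k)=\sgn_n(\gamma)^m=-1$ whenever $m$ is odd. Proposition~\ref{prop:odosgn} then shows that $\gamma^mz_k$ is an odometer, and every vertex lies in a stable cycle by Remark~\ref{rk:odosettled}.

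For $m$ even, iterate the even-case recursion $j\le v$ times. The $2^j$ descendants at depth $j$ are $\gamma^{(m+t\ell)/2^j}z_k$ for $t=0,\dots,2^j-1$, counted with the appropriate multiplicity. These exponents are integral for $j\le v$ by the valuation bookkeeping in Lemma~\ref{mkidentitylevel}. Those with odd exponent are odometers, so by Proposition~\ref{prop:settledsections}(1) they contribute fully to $s_n$.

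The remaining descendants have even exponent and the same $\ell$, so one can recurse again. The key claim to establish is a uniform bound, proved by induction on $n\ge v$: the set of vertices at level $n$ not in stable cycles has size at most $2^v$. The mechanism is that among the residues $m+t\ell$ modulo $2^{v+1}$, every branch whose exponent stays even for $v$ further halvings eventually becomes odd. The only exceptions are the branches tracing $2$-adic digits of $m$ along the direction $\ell$, and these contribute at most $2^v$ vertices.

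\emph{Main obstacle.} I expect this counting to be the hardest step. The recursion $m\mapsto (m/2,\,m/2+\ell)$ has to be analyzed carefully enough that the non-odometer branches do not proliferate. I would first treat $m=0$, where $z_k$ is the identity on $T_{v+1}$ by Lemma~\ref{kidentitylevel}, and check the bound there. Then I would handle general $m$ by splitting on whether $v_2(m)<v$, $v_2(m)=v$, or $v_2(m)>v$.
\begin{itemize}
\item If $v_2(m)<v$, the element is strongly settled from level $v_2(m)$ on: by Lemma~\ref{mkidentitylevel} the descendants after $v_2(m)$ halvings all have odd exponent, so they are odometers.
\item If $v_2(m)=v$, exactly one child of each split has larger valuation, so only one branch can stay bad.
\item If $v_2(m)>v$, shifting $m$ by multiples of $2^{v+1}$ reduces to the previous cases.
\end{itemize}

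\emph{Case (2), $v_2(\ell)=0$.} Here I would use Proposition~\ref{prop:square}. From $z_k\gamma^m=\gamma^{km}z_k$ we get
\[
(\gamma^m z_k)^2=\gamma^{m(1+k)}z_{k^2}.
\]
The corresponding $\ell'=(k^2-1)/2$ satisfies $v_2(\ell')=v_2(k+1)\ge2$, since $v_2(k-1)=1$. Also $k^2\neq\pm1$ because $k\neq\pm1$. Applying case (1) to the square gives $s_n\big((\gamma^mz_k)^2\big)\ge 2^n-2^{v_2(k+1)}$ for all $n\ge v_2(k+1)$. The inequality $s_{n}(\alpha^2)\le s_{n}(\alpha)$ from Proposition~\ref{prop:square} then transfers this bound to $\gamma^mz_k$, which gives the stated bound for $n\ge v_2(k+1)$. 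Uniform settledness follows in both cases.
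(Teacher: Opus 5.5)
Your reduction of case (2) to case (1) by squaring is correct, and it is tidier than the paper's route. The identities $z_k\gamma^m z_k^{-1}=\gamma^{km}$ and $z_k^2=z_{k^2}$ give $(\gamma^m z_k)^2=\gamma^{m(1+k)}z_{k^2}$ with $v_2\bigl(\tfrac{k^2-1}{2}\bigr)=v_2(k+1)\ge 2$, and $s_n(\alpha^2)\le s_n(\alpha)$ then finishes uniformly in $m$. The paper instead treats odd $m$ separately through the conjugate $(\gamma^{m(k+1)/2}z_{k^2},\id)\sigma$, and uses squaring only after conjugating even-$m$ elements to $z_k$. Your reduction, however, needs case (1) for every exponent, and case (1) has a genuine gap. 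The bound $2^n-2^{v}$, with $v=v_2(\ell)$, is never actually derived, and your sketch puts the non-stable vertices in the wrong case.

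\textbf{Where the counting goes wrong.} As long as the earlier exponents are even, the depth-$j$ descendants of $\gamma^m z_k$ are $\gamma^{(m+2t\ell)/2^j}z_k$ for $0\le t<2^j$, not $\gamma^{(m+t\ell)/2^j}z_k$. Since $v_2(2t\ell)\ge v+1$ for $t\neq 0$, when $v_2(m)\le v$ they all have valuation $v_2(m)-j$ for $j\le v_2(m)$.
\begin{itemize}
\item For $v_2(m)=v$, both children $m/2$ and $m/2+\ell$ have valuation $v-1$. Your claim that one child has larger valuation is false. In fact $\gamma^m z_k$ is strongly settled with $s_n=2^n$ for $n\ge v_2(m)$, exactly as for $v_2(m)<v$.
\item This is not cosmetic. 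The count for $z_k$ comes from $s_n(z_k)=s_{n-1}(z_k)+s_{n-1}(\gamma^{\ell}z_k)$. That recursion needs precisely that $\gamma^{\ell}z_k$, whose exponent has valuation exactly $v$, has no non-stable vertices from level $v$ on. A persistent bad branch there would lose vertices at every level and destroy uniformity.
\item The single bad branch actually occurs when $v_2(m)>v$. There one child has valuation exactly $v$ and the other has valuation $>v$.
\end{itemize}

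\textbf{What is missing for $v_2(m)>v$.} ``Shifting $m$ by multiples of $2^{v+1}$'' needs a reason. It is valid because $\gamma^a(\gamma^m z_k)\gamma^{-a}=\gamma^{m-2a\ell}z_k$ and $2\ell\ZZ_2=2^{v+1}\ZZ_2$. But it lands on $m=0$, not on your earlier cases. You then still have to compute $s_n(z_k)$:
\begin{itemize}
\item $z_k$ is trivial on $T_{v+1}$ by Lemma~\ref{kidentitylevel}, so $s_v(z_k)=0$.
\item The recursion above gives $s_n(z_k)=s_{n-1}(z_k)+2^{n-1}$ for $n>v$, hence $s_n(z_k)=2^n-2^v$.
\end{itemize}
These two steps are the content of Propositions~\ref{prop1:liseven} and~\ref{prop2:liseven}. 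With them in place your argument goes through. Your conjugation by powers of $\gamma$ would also give a shorter proof of the conjugacy part of Proposition~\ref{prop2:liseven} than the paper's level-by-level induction.
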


We first state an important corollary of Theorem~\ref{thm:classificationzk}.

\begin{corollary}\label{zksummary}
Let $k \neq \pm 1 \in \ZZ_2^{\times}$ and $m\in \ZZ_2$. Let $\nu=v_2(\frac{k^2-1}{4})$.
   \begin{enumerate}
   \item The automorphism $\gamma^m z_k$ is uniformly settled.
   \item For any $n \geq \nu + 1$, the number of vertices that are not in a stable cycle of $\gamma^mz_k$ at level $n$ is at most $2^{\nu +1}$. 
     \end{enumerate}
\end{corollary}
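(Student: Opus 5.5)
This follows from Theorem~\ref{thm:classificationzk} once we express the two bounds there in terms of $\nu=v_2\bigl(\frac{k^2-1}{4}\bigr)$. Write $\ell=\frac{k-1}{2}$ and $\frac{k+1}{2}=\ell+1$, so that
\[
\frac{k^2-1}{4}=\ell(\ell+1), \qquad \nu=v_2(\ell)+v_2(\ell+1).
\]
Exactly one of $\ell,\ell+1$ is even. The hypothesis $k\neq\pm1$ gives $\ell\neq 0$ and $\ell+1\neq 0$, so both valuations are finite and $\nu$ is a finite nonnegative integer. Part (1) is then immediate: in either case of Theorem~\ref{thm:classificationzk}, $\gamma^m z_k$ is uniformly settled.

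For part (2) we split according to the two cases of the theorem and check that its thresholds and bounds are dominated by those in the corollary.

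\emph{Case $v_2(\ell)\ge 1$.} Here $\ell+1$ is odd, so $\nu=v_2(\ell)$. Theorem~\ref{thm:classificationzk}(1) gives
\[
s_n(\gamma^m z_k)\ge 2^n-2^{\nu}\quad\text{for all } n\ge\nu.
\]
In particular this holds for every $n\ge\nu+1$, so at most $2^{\nu}\le 2^{\nu+1}$ vertices at level $n$ lie outside stable cycles.

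\emph{Case $v_2(\ell)=0$.} Here $\nu=v_2(\ell+1)=v_2\bigl(\frac{k+1}{2}\bigr)=v_2(k+1)-1$, so $v_2(k+1)=\nu+1$. Theorem~\ref{thm:classificationzk}(2) gives
\[
s_n(\gamma^m z_k)\ge 2^n-2^{\nu+1}\quad\text{for all } n\ge\nu+1,
\]
which is exactly the claimed bound.

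Since $|V_n|=2^n$, the number of vertices at level $n$ not in a stable cycle is $2^n-s_n(\gamma^m z_k)$, which is therefore at most $2^{\nu+1}$ for all $n\ge\nu+1$ in both cases. There is no real obstacle beyond the valuation bookkeeping. The one point needing care is the identity $v_2(k+1)=\nu+1$ in the second case, which is why the corollary's bound carries the exponent $\nu+1$ rather than $\nu$.
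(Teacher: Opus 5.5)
Your proposal is correct and is the paper's own argument: both reduce to Theorem~\ref{thm:classificationzk} by noting that $\nu$ equals $v_2(\ell)$ or $v_2\bigl(\frac{k+1}{2}\bigr)$, according to the parity of $\ell$. You also spell out what the paper's one-line proof leaves implicit, namely $2^{\nu}\le 2^{\nu+1}$ in the first case and $v_2(k+1)=\nu+1$ in the second.
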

\begin{proof}
   We first observe that if $v_2(\ell)\ge 1$, then $
v_2\!\left(\frac{k+1}{2}\right)=0,
$
while if $v_2(\ell)=0$, then 
$
v_2\!\left(\frac{k+1}{2}\right)\ge 1.
$
Thus, in each case, the 2-adic valuation of $\frac{k^2-1}{4}$ coincides with one of the valuations
$
v_2\!\left(\frac{k-1}{2}\right)
\; \text{or} \;
v_2\!\left(\frac{k+1}{2}\right).
$
The statement follows now from applying Theorem~\ref{thm:classificationzk}. 

\end{proof}

\subsection{Proof of Theorem~\ref{thm:classificationzk}} \ \\

\textbf{Case $1$: $v_2(\ell)\ge 1$}.
\begin{proposition}\label{prop1:liseven}
Let $m,s\in\ZZ_2$ and $k\ne \pm 1\in\ZZ_2^{\times}$. Assume $v_2(\ell)\geq 1$.
\begin{enumerate}
\item If $v_2(m)=v_2(s)\le v_2(\ell)$, then $\gamma^m z_k$ and
$\gamma^s z_k$ are conjugate.
\item If $v_2(m)\le v_2(\ell)$, then $\gamma^m z_k$ is strongly settled.
Moreover every vertex at level $n\ge v_2(m)$ lies in a stable cycle of
$\gamma^m z_k$, i.e.\ $s_n(\gamma^m z_k)=2^n$.
\end{enumerate}
\end{proposition}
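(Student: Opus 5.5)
The plan is to induct on $j:=v_2(m)$, with $0\le j\le v_2(\ell)$. At each step I peel off one level with the wreath recursion, in the same way as in Lemma~\ref{mkidentitylevel}.

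\textbf{Base case $j=0$.} Here I use signs rather than the recursion. By Lemma~\ref{lem:signzk}(1), $\sgn_n(z_k)=1$ for all $n\ge1$, since $v_2(\ell)\ge 1$. Since $m$ is odd and $\gamma$ is an odometer, $\sgn_n(\gamma^m)=\sgn_n(\gamma)^m=-1$. Hence
\[
\sgn_n(\gamma^m z_k)=\sgn_n(\gamma^m)\,\sgn_n(z_k)=-1 \quad\text{for all } n\ge 1 .
\]
By Proposition~\ref{prop:odosgn}, $\gamma^m z_k$ is therefore an odometer. Part (2) then follows from Remark~\ref{rk:odosettled}: every vertex at every level $n\ge 0$ lies in a stable cycle, so $s_n=2^n$. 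For part (1), $\gamma^m z_k$ and $\gamma^s z_k$ are both odometers. Two odometers are conjugate in $\Omega$, since conjugates of the standard odometer exhaust all odometers; alternatively, build the conjugating element level by level, matching $2^n$-cycles compatibly.

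I expect this base case to be the main point. A naive recursion does not terminate here. For odd $m$, a direct computation gives the unique first descendant as
\[
\gamma^{(m+1)/2+\ell}z_k\,\gamma^{(m-1)/2}z_k=\gamma^{m(k+1)/2}z_{k^2},
\]
using $z_k\gamma=\gamma^k z_k$ and Remark~\ref{rk:odosettled}. This is again of the form odd power of $\gamma$ times $z_{k'}$, now with $v_2(\ell')=v_2(\ell)+1$, so the descent never reaches a simpler case. That is exactly why I settle the base case via the sign criterion.

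\textbf{Inductive step $j\ge1$.} Write $\gamma^m=(\gamma^{m/2},\gamma^{m/2})$, which holds because $\gamma^2=(\gamma,\gamma)$. Then
\[
\gamma^m z_k=(\gamma^{m/2}z_k,\ \gamma^{m/2+\ell}z_k).
\]
Since $v_2(m/2)=j-1<v_2(\ell)$, both exponents $m/2$ and $m/2+\ell$ have valuation exactly $j-1\le v_2(\ell)$. So both sections satisfy the induction hypothesis with the same $k$. For (1), do the same decomposition for $\gamma^s z_k$; the hypothesis makes the corresponding sections pairwise conjugate, and Proposition~\ref{prop:conj}(1) gives the conjugacy. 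For (2), the hypothesis gives $s_{n-1}=2^{n-1}$ for both sections whenever $n-1\ge j-1$. Proposition~\ref{prop:settledsections}(1) then yields
\[
s_n(\gamma^m z_k)=2^{n-1}+2^{n-1}=2^n \quad\text{for all } n\ge j=v_2(m),
\]
so $\gamma^m z_k$ is strongly settled.
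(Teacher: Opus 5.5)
Your proof is correct and follows the paper's argument. Both induct on $v_2(m)$, settle the odd case with the sign criterion (Lemma~\ref{lem:signzk} and Proposition~\ref{prop:odosgn}), and handle the even case via $\gamma^m z_k=(\gamma^{m/2}z_k,\gamma^{m/2+\ell}z_k)$ together with Proposition~\ref{prop:conj}. The one small difference is that you get $s_n(\gamma^m z_k)=2^n$ for $n\ge v_2(m)$ directly inside the induction from Proposition~\ref{prop:settledsections}(1), whereas the paper proves it afterwards using Lemma~\ref{mkidentitylevel} and the observation that every section at level $v_2(m)$ is an odometer; your version is slightly cleaner.
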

\begin{proof}

If $v_2(m)=v_2(s)=0$, then, using Lemma~\ref{lem:signzk}, we obtain  
\[ \sgn_n(\gamma^m z_k)=\sgn_n(\gamma^s z_k)=\sgn_n(\gamma)=-1 \]
for all $n\geq 1$.  
Hence $\gamma^m z_k$ and $\gamma^s z_k$ are both odometers by Proposition~\ref{prop:odosgn}. Since any two odometers are conjugate in $\Omega$, it follows that $\gamma^m z_k$ and $\gamma^s z_k$ are conjugate. Moreover, they are strongly settled by Remark~\ref{rk:odosettled}.

Thus, we may assume $v_2(m)\ge 1$. We compute:
\[
\begin{split}
\gamma^m z_k 
&= (\gamma^{\frac{m}{2}},\gamma^{\frac{m}{2}})(z_k, \gamma^{\ell}z_k)\\
&=(\gamma^{\frac{m}{2}} z_k,\gamma^{\frac{m}{2}+\ell} z_k).
\end{split}
\]

Assume now that $1\le v_2(m)=v_2(s)=t\le v_2(\ell)$.  
Assume inductively that for any $r,q\in \mathbb{Z}_2$ with $v_2(r)=v_2(q)<t$, the automorphism $\gamma^r z_k$ is strongly settled and the elements $\gamma^r z_k$ and $\gamma^q z_k$ are conjugate.

By assumption $v_2(m)\le v_2(\ell)$, and therefore $v_2(\frac{m}{2})<v_2(\ell)$. This implies that
\[
v_2\!\left(\frac{m}{2}+\ell\right)
=\min\{\,v_2(\tfrac{m}{2}),\,v_2(\ell)\,\}
= v_2(\tfrac{m}{2})<t.
\]
By the induction hypothesis, the elements $\gamma^{\frac{m}{2}} z_k$ and $\gamma^{\frac{m}{2}+\ell} z_k$ are both strongly settled, and they are conjugate respectively to $\gamma^{s/2} z_k$ and $\gamma^{s/2+\ell} z_k$. By Theorem~\ref{thm:descsettled}, $\gamma^m z_k$ is strongly settled, and by Proposition~\ref{prop:conj}, it is conjugate to $\gamma^s z_k$.

By Proposition~\ref{mkidentitylevel}, the automorphism $\gamma^m z_k$ acts as the identity on level $v_2(m)$. There are $2^{v_2(m)}$ subtrees at this level. Writing $m=2^{v_2(m)} m'$, the element $\gamma^m$ acts as $\gamma^{m'}$ on each subtree, while $z_k$ acts as $\gamma^i z_k$ with either $i=0$ or $v_2(i)\ge 1$. Thus $\gamma^m z_k$ acts by $\gamma^{m'+i} z_k$ on each subtree, and this is always an odometer since $m'+i \in \mathbb{Z}_2^\times$.

Hence every vertex on level $v_2(m)$ lies in a stable cycle of $\gamma^m z_k$.

\end{proof}

\begin{proposition}\label{prop2:liseven}
Let $m\in\ZZ_2$, $k\ne 1\in\ZZ_2^{\times}$, and assume that
$v_2(\ell)\ge 1$. If $v_2(m)>v_2(\ell)$, then:
\begin{enumerate}
\item $\gamma^m z_k$ is conjugate to $z_k$.
\item For all $n\ge v_2(\ell)$, the number of vertices at level $n$ not lying in
a stable cycle of $\gamma^m z_k$ equals $2^{v_2(\ell)}$, i.e.
\[
s_n(\gamma^mz_k)=2^n-2^{v_2(\ell)}.
\]
\end{enumerate}
\end{proposition}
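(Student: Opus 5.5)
Write $t=v_2(\ell)\ge 1$. Note that $k\neq 1$ forces $t<\infty$. The plan is to argue by induction on $t$, using the wreath recursion
\[
\gamma^m z_k=(\gamma^{m/2}z_k,\ \gamma^{m/2+\ell}z_k),
\]
which is valid because $v_2(m)\ge 1$.

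\textbf{Part (1), conjugacy.} Since $v_2(m)>t\ge 1$, we have $v_2(m/2)\ge t$. Two cases arise.
\begin{enumerate}
\item If $v_2(m/2)>t$, then $v_2(m/2+\ell)=t$. Proposition~\ref{prop1:liseven}(1) makes the second section conjugate to $\gamma^{\ell}z_k$. Induction on $v_2(m)$ (or simply iterating) makes the first section conjugate to $z_k$.
\item If $v_2(m/2)=t$, then $v_2(m/2+\ell)>t$. Now the roles are swapped: the first section is conjugate to $\gamma^\ell z_k$ and the second to $z_k$.
\end{enumerate}
In both cases Proposition~\ref{prop:conj}(1) shows that $\gamma^m z_k$ is conjugate to $(z_k,\gamma^\ell z_k)=z_k$. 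To make the induction on $v_2(m)$ well founded, I would instead handle it by a level-by-level argument: show $\gamma^m z_k\restr{T_n}$ is conjugate to $z_k\restr{T_n}$ by induction on $n$ simultaneously for all $m$ with $v_2(m)>t$, and then pass to the limit. This uses the standard compactness fact that elements conjugate on every finite level are conjugate in $\Omega$, since the sets of conjugators form a nonempty inverse system of finite sets. The case $m=0$ is trivial, with $v_2(0)=\infty$.

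\textbf{Part (2), counting.} By (1) and Proposition~\ref{prop:conjsettled}, it suffices to compute $s_n(z_k)$. Proposition~\ref{prop:settledsections}(1) gives
\[
s_n(z_k)=s_{n-1}(z_k)+s_{n-1}(\gamma^\ell z_k).
\]
Because $v_2(\ell)=t\le v_2(\ell)$, Proposition~\ref{prop1:liseven}(2) gives $s_{n-1}(\gamma^\ell z_k)=2^{n-1}$ for $n-1\ge t$. Setting $d_n=2^n-s_n(z_k)$, this becomes $d_n=d_{n-1}$ for $n\ge t+1$. So it remains to show $d_t=2^t$, i.e.\ that no vertex of level $t$ lies in a stable cycle of $z_k$.

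By Lemma~\ref{kidentitylevel}, $z_k$ acts trivially on $T_{t+1}$. Every vertex at level $t$ is fixed, and so are both of its children. A vertex $v$ at level $t$ lying in a stable cycle would force its children at level $t+1$ to lie in a cycle of length $2$, which is a contradiction. Hence $s_t(z_k)=0$ and $d_n=2^t$ for all $n\ge t$. Here I use that stability at level $n$ concerns only levels above $n$, so the recursion for $s_n$ from Proposition~\ref{prop:settledsections} holds as stated.

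The main obstacle I expect is the conjugacy step in (1): making the recursion terminate, since each step reproduces an element of the same type $\gamma^{m'}z_k$ with $v_2(m')>t$. The level-by-level induction together with compactness of the conjugator sets is the fix I would try first. Alternatively, one can avoid (1) entirely in (2) by running the same recursion for $s_n(\gamma^m z_k)$ directly, by induction on $n$ over all such $m$. I would likely present it that way to be safe.
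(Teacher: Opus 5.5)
Your proposal is correct and follows essentially the same route as the paper. It uses a level-by-level induction for the conjugacy, with the same two cases $v_2(m/2)>v_2(\ell)$ and $v_2(m/2)=v_2(\ell)$ handled via Proposition~\ref{prop1:liseven} and Proposition~\ref{prop:conj}. It then applies the recursion $s_n(z_k)=s_{n-1}(z_k)+s_{n-1}(\gamma^\ell z_k)$ with base case $s_{v_2(\ell)}(z_k)=0$ from Lemma~\ref{kidentitylevel}.

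Your compactness remark, passing from conjugacy on every $T_n$ to conjugacy in $\Omega$, makes explicit a step the paper leaves implicit. Your alternative of running the recursion directly on $\gamma^m z_k$ also works, since $\gamma^m z_k$ acts trivially on $T_{v_2(\ell)+1}$ whenever $v_2(m)>v_2(\ell)$. One small wording point: your opening says the argument is by induction on $t$, but what you actually carry out, correctly, is induction on the level $n$.
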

\begin{proof}

It is enough to show that $\gamma^m z_k \restr{T_n}$ is conjugate to $z_k\restr{T_n}$ for all $n\geq 1$. We proceed by induction on $n$. Assume for induction that the statement holds at level $n-1$, 
that is, for any $t$ with $v_2(t)>v_2(\ell)$, 
$\gamma^t z_k\restr{T_{n-1}}$ is conjugate to $z_k\restr{T_{n-1}}$.

Assume $v_2(\tfrac{m}{2})>v_2(\ell)$. In this case, $v_2(\tfrac{m}{2}+\ell)=v_2(\ell)$. By Proposition~\ref{prop1:liseven}, the automorphism 
$\gamma^{\frac{m}{2}+\ell}z_k \restr{T_{n-1}}$ is conjugate to 
$\gamma^{\ell}z_k\restr{T_{n-1}}$. 
By the induction hypothesis, 
$\gamma^{\frac{m}{2}}z_k\restr{T_{n-1}}$ is conjugate to $z_k\restr{T_{n-1}}$. Hence, by Proposition~\ref{prop:conj}, 
$\gamma^m z_k=(\gamma^{\frac{m}{2}} z_k,\gamma^{\frac{m}{2}+ \ell}z_k)\restr{T_n}$ 
is conjugate to 
$z_k=(z_k, \gamma^{\ell}z_k)\restr{T_n}$.

Assume now that $v_2(\tfrac{m}{2})=v_2(\ell)$. In this case, $v_2(\tfrac{m}{2}+\ell)>v_2(\ell)$, so 
$\gamma^{\frac{m}{2}+\ell}z_k \restr{T_{n-1}}$ is conjugate to 
$z_k\restr{T_{n-1}}$ by the induction hypothesis, 
and $\gamma^{\frac{m}{2}}z_k\restr{T_{n-1}}$ is conjugate to 
$\gamma^{\ell}z_k\restr{T_{n-1}}$ by Proposition~\ref{prop1:liseven}. 
Thus, Proposition~\ref{prop:conj} again finishes the statement.

By Proposition~\ref{prop:conjsettled}, it suffices to prove the statement for $z_k$. We now prove by induction on $n$ that 
$s_n(z_k)=2^n-2^{v_2(\ell)}$ for all $n \geq v_2(\ell)$. By Lemma~\ref{kidentitylevel}, every vertex at level $v_2(\ell)+1$ is fixed by $z_k$, which implies that all of the vertices at level $v_2(\ell)$ are also fixed, hence $z_k$ has no stable cycles at level $v_2(\ell)+1$ and therefore $s_{v_2(\ell)}(z_k)=0$.

Assume by induction that 
$s_m(z_k)= 2^m-2^{v_2(\ell)}$ for all $m$ with $v_2(\ell) \leq m<n$. 
By Proposition~\ref{prop:settledsections},  
$$ s_n(z_k)=s_{n-1}(z_k) + s_{n-1}(\gamma^{\ell} z_k).$$ 
Since $n-1 \geq v_2(\ell)$, Proposition~\ref{prop1:liseven} implies 
$s_{n-1}(\gamma^{\ell} z_k)=2^{n-1}$. 
Therefore, 
$$ s_n(z_k)=s_{n-1}(z_k) + 2^{n-1}.$$ 
By the induction assumption, 
$$s_n(z_k)= 2^{n-1} -2^{v_2(\ell)}+2^{n-1}=2^n-2^{v_2(\ell)}.$$

\end{proof}

\textbf{Case $2$: $v_2(\ell)=0$.}
\begin{proposition}\label{prop1:lisodd} 
Let $m\in \ZZ_2$ and $k\in \ZZ_2^\times$ with $k\neq -1$ and $v_2(\ell)=0$, where $\ell=(k-1)/2$.  
Assume $v_2(m)=0$. Then:
\begin{enumerate}
    \item The automorphism $\gamma^m z_k$ is conjugate to  $\gamma^{m'} z_k$  for every $m'$ with $v_2(m')=0$.
    \item Every vertex of $T_n$ lies in a stable cycle of  $\gamma^m z_k$  for all $n\geq v_2(k+1)$. 
    Hence $\gamma^m z_k$ is strongly settled.
\end{enumerate}
\end{proposition}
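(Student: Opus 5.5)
The plan is to pass to the first descendant, which has trivial action on level $1$, and to show that this descendant falls under Case $1$ for the parameter $k^2$. That lets me apply Proposition~\ref{prop1:liseven} and then lift back through Proposition~\ref{prop:conj} and Proposition~\ref{prop:settledsections}.

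\emph{Step 1: first descendant.} Since $v_2(m)=0$, the integers $\frac{m\pm1}{2}$ lie in $\ZZ_2$. A routine induction on the level from $\gamma=(\gamma,\id)\sigma$ gives
\[
\gamma^m=(\gamma^{\frac{m+1}{2}},\gamma^{\frac{m-1}{2}})\sigma .
\]
Multiplying by $z_k=(z_k,\gamma^\ell z_k)$ with the rule \eqref{semi} gives
\[
\gamma^m z_k=(\gamma^{\frac{m+1}{2}+\ell}z_k,\ \gamma^{\frac{m-1}{2}}z_k)\sigma .
\]
Its unique first descendant is the product of the two sections. I simplify it using $z_k\gamma^{j}=\gamma^{kj}z_k$ and $z_k^2=z_{k^2}$ (Remark~\ref{rk:odosettled}):
\[
\gamma^{\frac{m+1}{2}+\ell}\,\gamma^{k\frac{m-1}{2}}\,z_k^2=\gamma^{\frac{m(k+1)}{2}}z_{k^2}.
\]
The exponent collapses because $\frac{m+1}{2}+\frac{k-1}{2}+\frac{km-k}{2}=\frac{m(k+1)}{2}$.

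\emph{Step 2: valuations.} Put $\ell'=\frac{k^2-1}{2}$. The hypothesis $v_2(\ell)=0$ means $v_2(k-1)=1$, and hence $v_2(k+1)\ge 2$. It follows that
\[
v_2(\ell')=v_2(k-1)+v_2(k+1)-1=v_2(k+1)\ge 2 .
\]
I also need $k^2\neq\pm1$. Here $k^2\ne -1$ because $k^2\equiv 1 \pmod 8$, and $k^2\neq 1$ because $k\neq\pm1$. Finally $v_2\!\left(\frac{m(k+1)}{2}\right)=v_2(k+1)-1<v_2(\ell')$. So $\gamma^{m(k+1)/2}z_{k^2}$ lies in Case $1$ for $k^2$, with exponent valuation at most $v_2(\ell')$.

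\emph{Step 3: conclude.} For part (2), Proposition~\ref{prop1:liseven}(2) says every vertex at level $\ge v_2(k+1)-1$ lies in a stable cycle of the descendant, so $s_{n-1}=2^{n-1}$ for $n\ge v_2(k+1)$. Proposition~\ref{prop:settledsections}(2) then gives $s_n(\gamma^m z_k)=2\cdot 2^{n-1}=2^n$ for $n\ge v_2(k+1)$, so $\gamma^m z_k$ is strongly settled.

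For part (1), Proposition~\ref{prop:conj}(2) makes $\gamma^m z_k$ conjugate to $(\gamma^{m(k+1)/2}z_{k^2},\id)\sigma$, and likewise for $m'$. The two descendant exponents $\frac{m(k+1)}{2}$ and $\frac{m'(k+1)}{2}$ have the same valuation $v_2(k+1)-1\le v_2(\ell')$. So Proposition~\ref{prop1:liseven}(1) gives an $h$ with $h\,\gamma^{m(k+1)/2}z_{k^2}\,h^{-1}=\gamma^{m'(k+1)/2}z_{k^2}$. Conjugating by $(h,h)$ then carries $(a,\id)\sigma$ to $(hah^{-1},\id)\sigma$, which finishes part (1).

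The main obstacle is the bookkeeping in Step 1. I need to get the wreath product order conventions in \eqref{semi} right, and to justify the commutation $z_k\gamma^j=\gamma^{kj}z_k$ for $2$-adic $j$, which follows by continuity from $z_k\gamma z_k^{-1}=\gamma^k$. A sign or ordering slip there would change the exponent.
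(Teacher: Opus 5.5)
Your proposal is correct and follows essentially the same route as the paper. Both compute the single first descendant $\gamma^{m(k+1)/2}z_{k^2}$, place it in Case~1 for $k^2$ via Proposition~\ref{prop1:liseven}, and lift back to $\gamma^m z_k$ with Propositions~\ref{prop:settledsections} and~\ref{prop:conj}. Your explicit check that $k^2\neq\pm1$ and your explicit conjugator $(h,h)$ are details the paper leaves implicit.
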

\begin{proof}
Assume $v_2(m)=0$. Then
\[
\begin{split}
\gamma^m z_k 
   &= (\gamma^{\frac{m+1}{2}},\gamma^{\frac{m-1}{2}})\sigma (z_k,\gamma^\ell z_k) \\
   &= (\gamma^{\frac{m+1}{2}}\gamma^\ell z_k,\gamma^{\frac{m-1}{2}}z_k)\sigma .
\end{split}
\]
By Proposition~\ref{prop:conj}, $\gamma^m z_k$ is conjugate to
\[
(\gamma^{\frac{m(k+1)}{2}} z_{k^2}, \id)\sigma .
\]

We compute:
\[
v_2\!\left(\frac{m(k+1)}{2}\right)
   = v_2(k+1)-1
   \le v_2(k+1)
   = v_2\!\left(\frac{k^2-1}{2}\right).
\]
Since $v_2(k^2-1)\ge 2$, Proposition~\ref{prop1:liseven} applies to 
$\gamma^{\frac{m(k+1)}{2}} z_{k^2}$ and shows that it is strongly settled with
\[
s_n\!\left(\gamma^{\frac{m(k+1)}{2}} z_{k^2}\right) = 2^n
\qquad\text{for all } n\ge v_2(k+1)-1 .
\]

By Proposition~\ref{prop:conjsettled},
\[
s_n(\gamma^m z_k)
   = s_n\big((\gamma^{\frac{m(k+1)}{2}} z_{k^2},\id)\sigma\big),
\]
and Proposition~\ref{prop:settledsections} gives
\[
s_{n+1}(\gamma^m z_k)
   = 2\, s_n\!\left(\gamma^{\frac{m(k+1)}{2}}z_{k^2}\right)
   = 2^{n+1}
\]
for all $n\ge v_2(k+1)-1$.  
This proves the second claim.

\medskip

To prove (1) we show that $\gamma^m z_k$ is conjugate to $\gamma z_k$.  
From the computation above, $\gamma^m z_k$ is conjugate to  
\[
(\gamma^{\frac{m(k+1)}{2}} z_{k^2},\id)\sigma,
\]
and similarly,
$\gamma z_k $ is conjugate to
$(\gamma^{\frac{k+1}{2}} z_{k^2},\id)\sigma $. Thus it suffices to show that
$\gamma^{\frac{m(k+1)}{2}} z_{k^2}$
and
$\gamma^{\frac{k+1}{2}} z_{k^2}$ are conjugate. Finally, both exponents satisfy
$v_2(\frac{m(k+1)}{2})
   = v_2(k+1)-1
   = v_2(\frac{k+1}{2})$,
and by Proposition~\ref{prop1:liseven},  
$\gamma^t z_{k^2}$ depends up to conjugacy only on $v_2(t)$.  
Hence the two automorphisms are conjugate, completing the proof.
\end{proof}

\begin{proposition}\label{prop2:lisodd} Let $m\in \ZZ_2$ and $k \neq -1 \in \ZZ_2^{\times}$ such that $v_2(\ell)=0$.
   Assume $v_2(m) \geq 1$.
   \begin{enumerate}
       \item The automorphism $\gamma^mz_k$ is conjugate to $z_k$.
       \item The number of vertices at level $n \geq v_2(k+1)$ that do not belong to a stable cycle of $\gamma^m z_k$ is less than or equal to $2^{v_2(k+1)}$, that is $s_n(z_k) \geq 2^n-2^{v_2(k+1)}$.
    \end{enumerate}
\end{proposition}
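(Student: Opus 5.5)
The plan mirrors Proposition~\ref{prop2:liseven}: use the wreath recursion to split $\gamma^m z_k$ into two sections, observe that exactly one of them has odd exponent, and handle that one with Proposition~\ref{prop1:lisodd}. Since $v_2(m)\ge 1$, we can write
\[
\gamma^m z_k=(\gamma^{\frac m2},\gamma^{\frac m2})(z_k,\gamma^\ell z_k)=(\gamma^{\frac m2}z_k,\ \gamma^{\frac m2+\ell}z_k).
\]
Because $\ell$ is odd, exactly one of $\frac m2$ and $\frac m2+\ell$ is odd and the other is even.

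For (1), I will show by induction on $n$ that $\gamma^t z_k\restr{T_n}$ is conjugate to $z_k\restr{T_n}$ in $\Omega_n$ for every $t$ with $v_2(t)\ge1$. The base case $n=1$ is trivial, since both restrictions are the identity. For the inductive step, write $z_k=(z_k,\gamma^\ell z_k)$.
\begin{itemize}
\item If $\frac m2$ is even, the first section $\gamma^{m/2}z_k\restr{T_{n-1}}$ is conjugate to $z_k\restr{T_{n-1}}$ by the induction hypothesis. The second section $\gamma^{m/2+\ell}z_k$ has odd exponent, so it is conjugate to $\gamma^\ell z_k$ by Proposition~\ref{prop1:lisodd}(1).
\item If $\frac m2$ is odd, the roles of the two sections are swapped.
\end{itemize}
In both cases the finite-level version of Proposition~\ref{prop:conj}(1), which allows swapping the two sections, gives the conjugacy at level $n$. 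To pass from all finite levels to $\Omega$, I use compactness: the sets of conjugators at each level form an inverse system of nonempty finite sets, so their inverse limit is nonempty. This is the same argument implicitly used in Proposition~\ref{prop2:liseven}.

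For (2), Proposition~\ref{prop:conjsettled} lets me work with $z_k$ itself. Set $N=v_2(k+1)$. Proposition~\ref{prop:settledsections}(1) gives
\[
s_n(z_k)=s_{n-1}(z_k)+s_{n-1}(\gamma^\ell z_k).
\]
Since $\ell$ is odd, Proposition~\ref{prop1:lisodd}(2) gives $s_{n-1}(\gamma^\ell z_k)=2^{n-1}$ whenever $n-1\ge N$. Therefore the deficit $d_n:=2^n-s_n(z_k)$ satisfies $d_n=d_{n-1}$ for all $n\ge N+1$. Hence $d_n=d_N\le 2^N$ for every $n\ge N$, which is exactly the claimed bound $s_n(z_k)\ge 2^n-2^{v_2(k+1)}$.

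The main point needing care is step (1): applying Proposition~\ref{prop1:lisodd}(1) levelwise, and checking that its hypothesis $k\neq -1$ is available, which it is by assumption. The counting in (2) is then routine.
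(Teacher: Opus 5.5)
Your proposal is correct. Part (1) is essentially the paper's argument: the same case split by the parity of $\frac m2$, the induction hypothesis on the even-exponent section, Proposition~\ref{prop1:lisodd}(1) on the odd-exponent section, and Proposition~\ref{prop:conj} to glue. You also make explicit the compactness step from levelwise conjugacy to conjugacy in $\Omega$, which the paper leaves implicit.

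Part (2) takes a different route from the paper. The paper does not run the recursion for $z_k$; it compares $z_k$ with its square.
\begin{itemize}
\item By Proposition~\ref{prop:square} and Remark~\ref{rk:odosettled}, $s_n(z_{k^2})=s_n(z_k^2)\le s_n(z_k)$.
\item Since $v_2\bigl(\frac{k^2-1}{2}\bigr)=v_2(k+1)\ge 1$, Proposition~\ref{prop2:liseven} applied to $z_{k^2}$ gives $s_n(z_{k^2})=2^n-2^{v_2(k+1)}$ for $n\ge v_2(k+1)$.
\end{itemize}
So the paper obtains the Case~2 bound by reducing to Case~1, and the constant $2^{v_2(k+1)}$ appears as the exact defect of $z_{k^2}$.

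You instead use the recursion $s_n(z_k)=s_{n-1}(z_k)+s_{n-1}(\gamma^\ell z_k)$. This is the same argument the paper uses for Proposition~\ref{prop2:liseven}(2), with Proposition~\ref{prop1:lisodd}(2) playing the role of Proposition~\ref{prop1:liseven}. Your route has three features worth noting.
\begin{itemize}
\item It stays inside Case~2 and needs neither the squaring comparison nor the identity $z_k^2=z_{k^2}$.
\item It shows the stronger fact that the number of non-stable vertices is constant for $n\ge v_2(k+1)$.
\item The constant $2^{v_2(k+1)}$ comes only from the trivial estimate $d_N\le 2^N$ at the starting level. The bound is nonetheless the same as the paper's, and it is attained, for instance, for $k=3$.
\end{itemize}
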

\begin{proof}
Assume $v_2(m)\ge 1$. We show that $\gamma^m z_k\restr{T_n}$ is conjugate to $z_k\restr{T_n}$ for all $n\ge1$.  
Fix $n\ge1$ and assume inductively that for every $r\in\ZZ_2$ with $v_2(r)\ge1$, the restriction $\gamma^r z_k\restr{T_{n-1}}$ is conjugate to $z_k\restr{T_{n-1}}$.

Assume $v_2(m)=1$. 
Then $v_2(m/2+\ell)\ge1$, so by the induction hypothesis  
$\gamma^{m/2+\ell}z_k\restr{T_{n-1}}\sim z_k\restr{T_{n-1}}$,  
and by Proposition~\ref{prop1:lisodd},  
$\gamma^{m/2}z_k\restr{T_{n-1}}\sim \gamma^\ell z_k\restr{T_{n-1}}$.  
Proposition~\ref{prop:conj} then implies
\[
\gamma^m z_k=(\gamma^{m/2}z_k,\ \gamma^{m/2+\ell}z_k)\restr{T_n}
   \sim (z_k,\gamma^\ell z_k)\restr{T_n}=z_k \restr{T_n}.
\]

Assume now $v_2(m)>1$.  
Here $v_2(m/2+\ell)=0$, so Proposition~\ref{prop1:lisodd} gives  
$\gamma^{m/2+\ell}z_k\restr{T_{n-1}}\sim\gamma^\ell z_k\restr{T_{n-1}}$, and the induction hypothesis gives  
$\gamma^{m/2}z_k\restr{T_{n-1}}\sim z_k\restr{T_{n-1}}$.  
Again Proposition~\ref{prop:conj} yields  
\[
\gamma^m z_k\restr{T_n}\sim z_k\restr{T_n}.
\]

Thus $\gamma^m z_k$ is conjugate to $z_k$ on every $T_n$.  
By Proposition~\ref{prop:conjsettled}, it suffices to compute $s_n(z_k)$.

\smallskip
By Proposition~\ref{prop:square}, 
\[
s_{n}(z_k^2) \leq s_n(z_k).
\] and by Remark~\ref{rk:odosettled}, $s_{n}(z_k^2)=s_n(z_{k^2})$.
Because $v_2((k^2-1)/2)=v_2(k+1)\ge 1$, Proposition~\ref{prop2:liseven} yields that for all $n \geq v_2(k+1)$, 
\[
2^n-s_{n}(z_{k}) \leq 2^n-s_{n}(z_{k}^2) \leq 2^{v_2(k+1)}.
\]

\end{proof}

\subsection*{Proof of Theorem~\ref{thm:classificationzk}}
If $v_2(\frac{k-1}{2}) \geq 1$, then it follows from Proposition~\ref{prop1:liseven} and \ref{prop2:liseven}. Similarly, if $v_2(\frac{k-1}{2})=0$, then the result follows from Proposition~\ref{prop1:lisodd} and ~\ref{prop2:lisodd}.

\section{Arithmetic monodromy groups of quadratic polynomials with a periodic postcritical orbit}

For $r \geq 2$, we recursively define a set of $r$ elements in $\Omega$ as follows: 
\begin{equation}\label{generators}
    \begin{split}
     a_1=&(a_r,\id)\sigma \\ 
     a_2=&(a_{1},\id) \\
        & \vdots \\
      a_r = &(a_{r-1},\id)   
    \end{split}
\end{equation}
Here, $a_1$ acts nontrivially on level $1$, while $a_i$ for $1<i\le r$ act trivially on level $1$. For $n\ge2$, the restriction $a_i\restr{T_n}$ is defined recursively in terms of $a_{i-1}\restr{T_{n-1}}$.

Let $G$ be the topological closure of the subgroup of $\Omega$ generated by $a_1,\ldots,a_r$. This group is studied in detail by Pink \cite{Pinkpolyn}. The profinite geometric iterated monodromy group of a quadratic polynomial with a periodic postcritical orbit of length $r$ is a subgroup of $\Omega$ and it is conjugate to $G$; see \cite[Theorem~2.4.1]{Pinkpolyn}

We first show that $G$ contains the standard odometer $\gamma$.

\begin{proposition}
  The odometer $\gamma=(\gamma,\id)\sigma$ is an element of $G$. 
\end{proposition}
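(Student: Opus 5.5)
The plan is to show that $G$ contains an element of the form $(g,\id)\sigma$ for every $g\in G$. Then $\gamma\in G$ follows by approximation. Since $G$ is closed, it suffices to find, for every $n\ge 1$, some $g_n\in G$ with $g_n\restr{T_n}=\gamma\restr{T_n}$. Because $\gamma\restr{T_n}=(\gamma\restr{T_{n-1}},\id)\sigma$, we can argue by induction on $n$. If $g_{n-1}\in G$ agrees with $\gamma$ on $T_{n-1}$ and $(g_{n-1},\id)\sigma\in G$, then $g_n:=(g_{n-1},\id)\sigma$ agrees with $\gamma$ on $T_n$. The base case is $g_1=a_1$, since $a_1\restr{T_1}=\sigma=\gamma\restr{T_1}$.

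To get the key closure property, I introduce two sets:
\[
L:=\{h\in\Omega \mid (h,\id)\in G\},\qquad S:=\{g\in\Omega\mid (g,\id)\sigma\in G\}.
\]
Clearly $L$ is a closed subgroup of $\Omega$, and $a_1,\dots,a_{r-1}\in L$ because $a_{i+1}=(a_i,\id)$. Also $a_r\in S$ because $a_1=(a_r,\id)\sigma$. The group law \eqref{semi} gives $(h,\id)\cdot(g,\id)\sigma=(hg,\id)\sigma$, so $L\cdot S\subseteq S$. In particular $S\supseteq L\,a_r$. Thus it suffices to prove $L\supseteq G$, since then $S\supseteq G a_r=G$.

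The main obstacle is showing $a_r\in L$, that is, $(a_r,\id)\in G$; the other generators are already in $L$. For this I would use the elements $a_1^2=(a_r,a_r)\in G$ and the conjugates of $(h,\id)$ by $a_1$. By \eqref{semi},
\[
a_1(h,\id)a_1^{-1}=(\id,\,h')
\]
with $h'$ a conjugate of $h$ by $a_r^{\pm1}$. This produces elements of the form $(\id,x)$ in $G$. The idea is to combine these with $(a_r,a_r)$ to split off the left coordinate, getting $(a_r,a_r)(\id,a_r^{-1})=(a_r,\id)$ once $(\id,a_r)\in G$ is known.

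Using $(\id,a_r)$ this way is circular unless handled level by level. I would therefore run a simultaneous induction on $n$, proving $(a_r,\id)\restr{T_n}\in G_n$ and $(\id,a_r)\restr{T_n}\in G_n$ together, and exploit the recursion $a_r=(a_{r-1},\id)$ to pass from level $n-1$ to level $n$. If this does not close up, the fallback is to work directly with the product $c=a_2a_3\cdots a_ra_1$. The group law gives $c=(a_1a_2\cdots a_r,\id)\sigma$. Its first section is a cyclic rotation of $c$ itself, hence conjugate to $c$. The proposal is to use this near-fixed-point relation, together with Proposition~\ref{prop:conj} and conjugations inside $G$, to correct $c$ level by level into agreement with $\gamma$.
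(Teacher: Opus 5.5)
Your main route fails at its key step: $(a_r,\id)\notin G$. Your opening claim that $(g,\id)\sigma\in G$ for every $g\in G$ already fails for $g=\id$, because $\sigma\notin G$. The two facts are equivalent, since $(a_r,\id)^{-1}a_1=\sigma$.

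To see that these elements are not in $G$, apply Remark~\ref{sgndecomp} to the recursions $a_1=(a_r,\id)\sigma$ and $a_i=(a_{i-1},\id)$. This gives $\sgn_n(a_i)=-1$ exactly when $n\equiv i\pmod r$. So the continuous character $g\mapsto\sgn_1(g)\sgn_{r+1}(g)$ is trivial on every $a_i$, hence on the closed group $G$. But $\sgn_1((a_r,\id))=1$, while $\sgn_{r+1}((a_r,\id))=\sgn_r(a_r)=-1$. So even $(a_r,\id)\restr{T_{r+1}}\notin G_{r+1}$, and the simultaneous level-by-level induction you propose must break at level $r+1$. The group $G$ does not contain $G\times\{\id\}$, so no argument that splits off a coordinate $(a_r,\id)$ can succeed.

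Your fallback via $c=a_2\cdots a_ra_1=(a_1\cdots a_r,\id)\sigma=a_1^{-1}(a_1\cdots a_r)a_1$ is exactly the paper's starting point. However, you have not supplied the ingredient that makes it work. Proposition~\ref{prop:conj} gives conjugacy only in $\Omega$, which carries no information here. All odometers are conjugate in $\Omega$, and $a_1\cdots a_r$ is one by Proposition~\ref{prop:odosgn}. What you need are conjugators lying in $G$ that preserve the shape $(\cdot,\id)\sigma$.

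The missing lemma is that $(u,u)\in G$ for every $u\in G$. Indeed, $a_1^2=(a_r,a_r)$, and $a_ia_1a_ia_1^{-1}=(a_{i-1},a_{i-1})$ for $2\le i\le r$. Since $u\mapsto(u,u)$ is a continuous homomorphism and $G$ is closed, the claim follows. Moreover, $(u,u)(x,\id)\sigma(u,u)^{-1}=(uxu^{-1},\id)\sigma$, so diagonal conjugators preserve the required shape.

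With this, the induction closes. Suppose $\gamma\restr{T_{n-1}}=g_{n-1}(a_1\cdots a_r)g_{n-1}^{-1}\restr{T_{n-1}}$ with $g_{n-1}\in G$. Then $c\restr{T_n}=(g_{n-1},g_{n-1})^{-1}\gamma\,(g_{n-1},g_{n-1})\restr{T_n}$. Hence $\gamma\restr{T_n}\in G_n$ for every $n$, and $\gamma\in G$ because $G$ is closed.

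In short, you should track $G$-conjugates of the odometer $a_1\cdots a_r$ using diagonal elements, rather than try to place $(h,\id)$ in $G$.
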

\begin{proof}
    We show that for any $n\geq 1$ there exists an element $g_n \in G$ such that
    \[ \gamma \restr{T_n}= g_n(a_1\cdots a_r) g_n^{-1} \restr{T_n}.\]
  For $n=1$, it is clear since $\gamma\restr{T_1}=a_1 \ldots a_r \restr{T_1}=\sigma$. Assume that there is $g_{n-1} \in G$ such that 
  \[ \gamma \restr{T_{n-1}}= g_{n-1}(a_1 \cdots a_r) g_{n-1}^{-1} \restr{T_{n-1}}.\]
Let $ \alpha=(a_1 \ldots a_r, \id)\sigma$. Then
\begin{equation}
    \begin{split}
        \alpha=&(a_1,\id)(a_2,\id)\ldots (a_{r-1},\id) (a_r,\id)\sigma  \\
        =& a_2a_3 \ldots a_r a_1 \\
        =&a_1^{-1}(a_1a_2 \cdots a_r)a_1
    \end{split}
\end{equation}

Since $\alpha$ is conjugate to $a_1 a_2\cdots a_r$ in $G$, it suffices to show that $\alpha \restr{T_n}$ and $\gamma \restr{T_n}$ are conjugate in $G_{n}$.  Now we have
\begin{equation}
    \begin{split}
        \alpha\restr{T_n}&=(a_1 a_2\ldots a_{r}\restr{T_{n-1}},\id)\sigma \\
                    &=(g_{n-1}^{-1}\gamma g_{n-1} \restr{T_{n-1}}, \id)\sigma \\
                    &=(g_{n-1}^{-1}, g_{n-1}^{-1})\gamma (g_{n-1},g_{n-1}) \restr{T_n}.
    \end{split}
\end{equation}
By Lemma~\ref{lem:diagonal}, $(g_{n-1},g_{n-1})$ is an element of $G$ and the statement follows. 
\end{proof}

\begin{lemma}\label{lem:diagonal}
    For any $u \in G$, the element $(u,u)$ also lies in $G$.
\end{lemma}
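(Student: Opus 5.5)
The plan is to show that the set $D:=\{u\in G \mid (u,u)\in G\}$ is a closed subgroup of $G$ containing the generators $a_1,\dots,a_r$, and hence equals $G$. The map $\Omega\to\Omega$, $u\mapsto (u,u)$, is a continuous group homomorphism. Therefore $D$ is the preimage of the closed subgroup $G$ under a continuous homomorphism, intersected with $G$, so it is a closed subgroup. By definition $G=\langle\langle a_1,\dots,a_r\rangle\rangle$, so it remains only to check that $(a_i,a_i)\in G$ for each $i$.

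For $2\le i\le r$ the recursion \eqref{generators} gives $a_{i+1}=(a_i,\id)$ when $i<r$, and $a_1=(a_r,\id)\sigma$. The natural candidate for $(a_i,a_i)$ is therefore the product of $(a_i,\id)$ with its conjugate by a first-level swap. Concretely, using \eqref{semi}, for any $\beta\in\Omega$ and any $g=(g_0,g_1)\sigma$ we compute
\[
g^{-1}(\beta,\id)g=(g_1^{-1}\beta^{\ast}\cdots)
\]
where the first-level swap moves the nontrivial section to the second coordinate, conjugated by a section of $g$. I will use $g=a_1$, whose sections are $a_r$ and $\id$. A short computation gives $a_1^{-1}(\beta,\id)a_1=(\id,\, a_r^{-1}\beta a_r)$ or $(\id,\beta)$, depending on which coordinate carries $a_r$. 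In either case, for $\beta=a_i$ with $1\le i\le r-1$ I obtain an element $(\id, c\,a_i c^{-1})$ of $G$ with $c\in\{\id,a_r^{\pm1}\}$. Multiplying by $a_{i+1}=(a_i,\id)$ produces $(a_i, c a_i c^{-1})$.

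The main obstacle is removing this conjugation by $a_r$ in the second coordinate when it appears. I first need to pin down the orientation in the formula $(xv)\gamma=(x)\tau\cdot(v)\gamma_x$. I expect that conjugating by $a_1$ in one direction ($a_1(\beta,\id)a_1^{-1}$ versus $a_1^{-1}(\beta,\id)a_1$) puts the trivial section $\id$ of $a_1$ next to $\beta$, yielding exactly $(\id,\beta)$. If one direction fails, the other should work, since the sections of $a_1^{-1}$ are $\id$ and $a_r^{-1}$ in swapped positions. This gives $(a_i,a_i)\in G$ for $1\le i\le r-1$. For $i=r$ the element $(a_r,\id)$ is $a_1\sigma^{-1}$, and $\sigma$ need not lie in $G$. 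So I would instead use $a_1^2=(a_r,a_r)$, computed via \eqref{semi}: $(a_r,\id)\sigma(a_r,\id)\sigma=(a_r\cdot\id,\ \id\cdot a_r)=(a_r,a_r)$. Thus $(a_r,a_r)=a_1^2\in G$ directly, which disposes of the last generator.

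Combining these, $D$ contains all of $a_1,\dots,a_r$, and closedness gives $D=G$, which proves the lemma.
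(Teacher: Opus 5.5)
Your proof is correct and essentially the paper's: it also uses $a_1^2=(a_r,a_r)$ and $a_{i+1}\cdot a_1a_{i+1}a_1^{-1}=(a_i,\id)(\id,a_i)=(a_i,a_i)$ for $1\le i\le r-1$. Your hedge on the direction of conjugation is unnecessary, since \eqref{semi} gives $a_1(\beta,\id)a_1^{-1}=(\id,\beta)$ directly; your closed-subgroup argument via the continuous homomorphism $u\mapsto(u,u)$ supplies the passage from generators to all of $G$, which the paper leaves implicit.
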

\begin{proof}
For every $i=1,\ldots ,r$, we have $(a_i,a_i) \in G$: $a_1^2=(a_r,a_r) \in G$ and
for $i=2,\ldots,r$, the product $a_ia_1a_ia_1^{-1}$ gives the desired element.
\end{proof}

\subsection{Summary of Pink's results on the normalizer of $G$ in $\Omega$}

We let $N(G)$ denote the normalizer of $G$ in $\Omega$ and $N(\gamma)$ the normalizer of the topological group generated by $\gamma$ in $\Omega$.

Pink \cite{Pinkpolyn} studies the action of $N(G)$ on the maximal abelian quotient $G_{\text{ab}}$ of $G$. A standard group-theoretic argument shows that this action factors through $N(G)/G$. Pink shows that the kernel of this action is exactly $G$. Moreover, $N(G)/G$ embeds into the automorphism group of $G_{\text{ab}}$, which is isomorphic to the automorphism group of the direct product of $r$ copies of $\ZZ_2$.
The image of this embedding is the direct product of $r$ copies of the automorphism group of $\ZZ_2$ which is isomorphic to $\ZZ_2^{\times}$.

\begin{theorem}[{\cite[Theorem 2.6.8]{Pinkpolyn}}]\label{PinkNorm1}
Let $r \ge 2$, and let $N(G)$ denote the normalizer of $G$ in $\Omega$. Then
\begin{equation}\label{eq:NG/G}
    N(G)/G \cong (\mathbb{Z}_2^{\times})^{r}.
\end{equation}
\end{theorem}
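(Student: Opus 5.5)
Since Theorem~\ref{PinkNorm1} is quoted from Pink \cite[Theorem 2.6.8]{Pinkpolyn}, I would reconstruct his argument in two halves: injectivity of $N(G)/G \to \Aut(G_{\text{ab}})$ with image inside $(\ZZ_2^{\times})^r$, and surjectivity onto $(\ZZ_2^{\times})^r$.

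\textbf{Step 1: the abelianization.} I would show that $G_{\text{ab}} \cong \ZZ_2^r$, freely generated by the images of $a_1,\dots,a_r$. For the lower bound I would use sign-type characters. By Remark~\ref{sgndecomp}, the maps $\sgn_n$ together with the level-one action give continuous homomorphisms $G \to \{\pm1\}$. Refining these to the $2$-adic characters $G \to \ZZ_2$ that record the exponent of each $a_i$ should show that the images of the $a_i$ are independent and of infinite order. Since each $a_i$ generates a copy of $\ZZ_2$, we get $G_{\text{ab}} \cong \ZZ_2^r$.

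\textbf{Step 2: the image lies in $(\ZZ_2^{\times})^r$.} Let $g \in N(G)$. I would show that $g a_i g^{-1}$ is $G$-conjugate to $a_i^{k_i}$ for some $k_i \in \ZZ_2^\times$. The point is that the conjugacy classes of the $a_i$ (the "inertia generators" at the postcritical points) are intrinsically characterized inside $G$: $a_i$ fixes exactly the vertices on a specific ray pattern, and its cycle type is determined recursively through descendants (Proposition~\ref{prop:conj}). A normalizing element must therefore permute these closed conjugacy classes of procyclic subgroups. It must also respect the recursion $a_i=(a_{i-1},\id)$, which forces the induced permutation of indices to be trivial. This places the image in $\prod_i \Aut(\ZZ_2 a_i) = (\ZZ_2^\times)^r$.

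\textbf{Step 3: injectivity (kernel $=G$).} If $g$ acts trivially on $G_{\text{ab}}$, I would show $g \in G$ by a level-by-level induction. Write $g=(g_0,g_1)\tau$, and note that $g$ normalizes $G$ together with its first-level stabilizer structure. By Lemma~\ref{lem:diagonal} we may multiply by elements of $G$ to make $\tau=\id$ and arrange $g_0=g_1$ modulo $G$. Then the sections $g_0,g_1$ again normalize $G$ and act trivially on $G_{\text{ab}}$. This yields, on every finite level $T_n$, an element of $G_n$ agreeing with $g$. Closedness of $G$ then gives $g\in G$.

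\textbf{Step 4: surjectivity.} For $(k_1,\dots,k_r)\in(\ZZ_2^\times)^r$, I would construct $g$ recursively, in the spirit of $z_k=(z_k,\gamma^\ell z_k)$. Set $g=(h,h')$, where $h,h'$ are defined by a system of recursive equations. These equations are chosen so that $g a_i g^{-1}=c_i a_i^{k_i} c_i^{-1}$ with $c_i\in G$. The existence of a solution is checked on each $T_n$ and then passed to the limit, exactly as in the proof of Lemma~\ref{lem:zkcongruence}.

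\textbf{Main obstacle.} I expect Step 2 to be the hardest: proving that any normalizing element sends each $a_i$ to a $G$-conjugate of a power of the same $a_i$. This needs a rigid characterization of the conjugacy classes of the generators inside $G$. I would first attempt it via the fixed-point and descendant structure of the $a_i$.
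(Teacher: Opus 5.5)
The paper gives no proof of its own here. It quotes Pink and summarizes the same skeleton you use: the conjugation action of $N(G)$ on $G_{\text{ab}}\cong\ZZ_2^r$ has kernel $G$, and its image is the diagonal $(\ZZ_2^\times)^r$. As written, however, your two load-bearing steps are not proved.

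In Step~1 the sign characters only see $G_{\text{ab}}/2G_{\text{ab}}$. By Remark~\ref{sgndecomp}, $\sgn_n(a_i)=-1$ iff $n\equiv i\pmod r$, so $\sgn_1,\dots,\sgn_r$ give $G\to(\ZZ/2)^r$ and nothing finer. Also, $a_i$ having infinite order in $G$ says nothing about its image in $G_{\text{ab}}$. ``Refining to $2$-adic characters that record exponents'' is precisely the claim to be proved. One way to construct them is level by level on the level-one stabilizer $H=G\cap(\Omega\times\Omega)$. Put $\chi_j((x,y))=\chi_{j-1}(x)+\chi_{j-1}(y)$ for $j\ge2$, $\chi_1((x,y))=2\chi_r(x)$, and $\chi(ha_1)=\chi(h)+e_1$. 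This is a homomorphism because $H\subseteq\{(x,y):\chi_r(x)=\chi_r(y)\}$; check this on $a_1^2=(a_r,a_r)$, $a_i=(a_{i-1},\id)$ and $a_1a_ia_1^{-1}=(\id,a_{i-1})$. The factor $2$ is where the $2$-adic precision grows.

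Step~2, which you flag yourself, is the real gap, and the mechanism you propose does not work as stated. Fixed-point patterns and cycle types are $\Omega$-conjugacy invariants, so they only recover that $ga_ig^{-1}$ is $\Omega$-conjugate to $a_i$, which holds for every $g\in\Omega$. Nothing you say shows that an element of $G$ in the $\Omega$-class of $a_i$ is $G$-conjugate to some $a_i^{k}$, and that statement is more than you need. Triviality of the index permutation is the easy part, since $\sgn_n$ is a class function on $\Omega$. The actual issue is that a priori the image of $ga_ig^{-1}$ in $G_{\text{ab}}$ could be $e_i+2v$ for any $v$.

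Your instinct to use descendants is right, but aim it at this claim: every $b\in G$ that is $\Omega$-conjugate to $a_i$ satisfies $\chi(b)\in\ZZ_2^\times e_i$. For $i\ge2$, Proposition~\ref{prop:conj} forces $b=(b_0,\id)$ or $(\id,b_0)$ with $b_0\in G$ $\Omega$-conjugate to $a_{i-1}$. The constraint on $H$ then gives $\chi_r(b_0)=0$, i.e.\ $\chi_1(b)=0$. For $i=1$, the descendant $c_0c_1\in G$ is $\Omega$-conjugate to $a_r$ and $\chi_j(b)=\chi_{j-1}(c_0c_1)$. Going once around the cycle kills every coordinate $j\neq i$, and $\sgn_i$ makes the remaining one a unit.

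Step~3 has a similar hole. Lemma~\ref{lem:diagonal} only supplies $(u,u)$. Once $\tau=\id$, the relation $g_1g_0^{-1}\in G$ is automatic from $ga_1g^{-1}a_1^{-1}\in H$, so $g=(\id,u)(g_0,g_0)$ with $u\in G$. To close the induction you need $(\id,u)\in G$. This requires $H=\{(x,y)\in G\times G:\chi_r(x)=\chi_r(y)\}$, together with $\chi_r(u)=0$. The description of $H$ holds because the normal subgroup $\{x:(x,\id)\in H\}$ contains $a_1,\dots,a_{r-1}$ and has procyclic quotient, hence equals $\ker\chi_r$.

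Step~4 can be made concrete. Let $w=(w',a_r^{(k_1-1)/2}w')$, where $w'$ is the element for $(k_2,\dots,k_r,k_1)$; this is a system of $r$ recursions solved level by level, as for $z_k$. Then $wa_iw^{-1}=a_i^{k_i}$ exactly.
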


The next theorem identifies $N(\gamma)$ as a subgroup of $N(G)$ and describes its image under the isomorphism given in ~\eqref{eq:NG/G}.
In particular, the quotient $N(\gamma)G/G$ embeds into the diagonal subgroup of $(\ZZ_2^{\times})^r$. Moreover, for any $\tau \in \ga$, the image of $\tau$ is also a diagonal element in $(\ZZ_2^{\times})^r$.

\begin{theorem}[{\cite[Theorem 2.7.7 and Theorem~2.8.4]{Pinkpolyn}}]\label{PinkNorm2}\hfill
\begin{enumerate}

    \item
    Let $k \in \mathbb{Z}_2^{\times}$ and let $z \in \Omega$ such that  $z\gamma z^{-1} = \gamma^{k}.$
    Then $z \in N(G)$, and the image of the coset $zG$ under the isomorphism given in ~\eqref{eq:NG/G} is the diagonal element
    \[
        (k, \ldots, k) \in (\mathbb{Z}_2^{\times})^{r}.
    \]

    \item
    Let $\tau \in \Gal(\bar{K}/K)$. Under the composite homomorphism
    \[
        \Gal(\bar{K}/K)
            \longrightarrow \ga / G
            \hookrightarrow N(G)/G
            \xrightarrow{\;\sim\;} (\mathbb{Z}_2^{\times})^{r},
    \]
    the element $\tau$ maps to $(\mu,\ldots,\mu)$, where $\mu \in \mathbb{Z}_2^{\times}$ is determined by
    \[
        \tau(\zeta) = \zeta^{\mu}
        \qquad \text{for every $2^{m}$th root of unity $\zeta$}.
    \]
    Moreover, if $K$ is finitely generated over its prime field, then the image of $\Gal(\bar{K}/K)$ in $\mathbb{Z}_2^{\times}$ is of finite index.

\end{enumerate}
\end{theorem}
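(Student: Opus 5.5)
Since this theorem is quoted from Pink, I only outline how I would reprove it from the results above. Both parts rest on one structural fact from the discussion before Theorem~\ref{PinkNorm1}. The map $N(G)/G\hookrightarrow\Aut(G_{\text{ab}})$ has image the diagonal torus $(\ZZ_2^\times)^r$ relative to the basis $\bar a_1,\dots,\bar a_r$ of $G_{\text{ab}}\cong\ZZ_2^r$. Concretely, a coset $xG$ corresponds to $(k_1,\dots,k_r)$ exactly when $x a_i x^{-1}$ is $G$-conjugate to $a_i^{k_i}$ for each $i$. So in each part, once the element is known to lie in $N(G)$, its image is determined by how it acts on the classes $\bar a_i$.

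For (1), I would first reduce to $z=z_k$. By Proposition~\ref{normalizer of gamma} we have $z=\gamma^m z_k$, and $\gamma\in G$. Hence $zG=z_kG$ once $z_k\in N(G)$, and the image in $(\ZZ_2^\times)^r$ is unaffected by the factor $\gamma^m$. To show $z_k\in N(G)$, I would argue level by level, using that $G$ is closed. The claim is that $z_k a_i z_k^{-1}\restr{T_n}\in G_n$ for all $i$ and $n$, proved by induction on $n$.
\begin{itemize}
\item Write $z_k=(z_k,\gamma^\ell z_k)$ and apply the recursions \eqref{generators}.
\item For $i\ge 2$ this gives
\[
z_k a_i z_k^{-1}=(z_k a_{i-1} z_k^{-1},\id).
\]
\item For $i=1$ it gives
\[
z_k a_1 z_k^{-1}=(z_k a_r z_k^{-1}\gamma^{-\ell},\,\gamma^{\ell})\sigma.
\]
\item To place these restrictions back in $G_n$, I would use the induction hypothesis, the facts $\gamma\in G$ and $(u,u)\in G$ (Lemma~\ref{lem:diagonal}), and the identity $a_1^2=(a_r,a_r)$.
\end{itemize}
This membership step is where I expect the real work. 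Controlling which pairs $(u,v)$ lie in $G$ requires more than the diagonal lemma; one likely needs Pink's description of $G$ as a fibre product over level one, or a sign/abelianization criterion for membership in $G$. I would try first to show that $G\cap(\Omega\times\Omega)$ contains all $(u,v)$ with $u,v\in G$ and $\bar u\bar v^{-1}$ in a suitable sublattice of $G_{\text{ab}}$.

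Given $z_k\in N(G)$, the value of the image is immediate. Let $z$ act on $G_{\text{ab}}$ by $\operatorname{diag}(k_1,\dots,k_r)$. The element $\gamma$ is $G$-conjugate to $a_1\cdots a_r$ (this is the content of the proof that $\gamma\in G$), so $\bar\gamma=(1,\dots,1)$. Since $z\gamma z^{-1}=\gamma^k$, we get $(k_1,\dots,k_r)=(k,\dots,k)$.

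For (2), I would use the moduli interpretation of the generators. For a quadratic polynomial over $K$ in characteristic $\neq 2$, the finite critical point is $K$-rational, so its orbit $P$ consists of $K$-rational points. Under the isomorphism of \cite[Theorem~2.4.1]{Pinkpolyn}, each $a_i$ (up to $G$-conjugacy) is the image of a generator of tame inertia at a point of $P$. Each $\tau\in\Gal(\bar K/K)$, lifted to $\pi_1^{\acute{e}t}(\PP^1_K\setminus P)$, maps an inertia generator at a rational point to a conjugate of its $\chi(\tau)$-th power, where $\chi$ is the $2$-adic cyclotomic character. So $\tau$ acts on each $\bar a_i$ by $\mu=\chi(\tau)$, and the image is $(\mu,\dots,\mu)$.

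The finite-index statement is then a fact about $\chi(G_K)\subseteq\ZZ_2^\times$.
\begin{itemize}
\item In characteristic $0$, the field $K\cap\bar{\mathbb Q}$ is a number field, so $K$ meets $\mathbb Q(\zeta_{2^\infty})$ in a finite extension of $\mathbb Q$, and $\chi(G_K)$ is open in $\ZZ_2^\times$.
\item In characteristic $p$ odd, $K\cap\bar{\mathbb F}_p$ is finite, say $\mathbb F_{p^f}$, so $\chi(G_K)$ is the closure of $p^{f\ZZ}$. Since $\ZZ_2^\times\cong\{\pm1\}\times\ZZ_2$ and $p\neq\pm1$, this closure is open.
\end{itemize}
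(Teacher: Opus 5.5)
The paper gives no proof of this statement; it imports both parts from Pink, so your proposal has to stand on its own. Your sketch of (2) is fine: rationality of $P$, inertia generators mapping to $G$-conjugates of the $a_i$ via Pink's Theorem~2.4.1, Galois acting on tame inertia through the cyclotomic character, and openness of the image in both characteristics. Your last step in (1) is also fine: once $z\in N(G)$ is known, $\bar\gamma=\bar a_1+\dots+\bar a_r$ and $z\gamma z^{-1}=\gamma^k$ force the diagonal image $(k,\dots,k)$.

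The genuine gap is the step you flag yourself: $z_k\in N(G)$ is never proved, and the induction you set up cannot close as formulated. The hypothesis ``$z_ka_iz_k^{-1}\restr{T_{n-1}}\in G_{n-1}$'' forgets \emph{which} element of $G$ you have, but membership of a pair in $G$ depends on exactly that. Already for $i\ge 2$ you need $(x,\id)\restr{T_n}\in G_n$ with $x=z_ka_{i-1}z_k^{-1}$, and $x\in G$ is not enough. For instance, $(a_r,\id)\notin G$: the character $\sgn_1\cdot\sgn_{r+1}$ is trivial on every $a_i$, hence on $G$, but equals $-1$ on $(a_r,\id)$. The tools you list (Lemma~\ref{lem:diagonal}, $\gamma\in G$, $a_1^2=(a_r,a_r)$) only produce diagonal pairs and cannot settle this.

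What is missing is a stronger inductive statement together with one membership fact. Let $N$ be the closed normal subgroup of $G$ generated by $a_1,\dots,a_{r-1}$. You have $a_{j+1}=(a_j,\id)$ and $a_1a_{j+1}a_1^{-1}=(\id,a_j)$ for $j<r$. Both projections of $G\cap(\Omega\times\Omega)$ are onto $G$, since they contain $a_1,\dots,a_{r-1}$ and, via $a_1^2$, also $a_r$. Conjugating $(a_j,\id)$ and $(\id,a_j)$ by elements of $G\cap(\Omega\times\Omega)$ then gives $N\times\{\id\}\subseteq G$ and $\{\id\}\times N\subseteq G$. Moreover, $\gamma$ is $G$-conjugate to $a_1\cdots a_r$, and $G/N$ is abelian (procyclic, generated by the image of $a_r$), so $\gamma\equiv a_r\pmod N$.

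Now prove by induction on $n$ that there are $g_1,\dots,g_r\in G$ with $z_ka_iz_k^{-1}\restr{T_n}=g_ia_i^kg_i^{-1}\restr{T_n}$; for $n=1$ take $g_i=\id$.
For $i\ge 2$, the new conjugator is $(g_{i-1},g_{i-1})$, which lies in $G$ by Lemma~\ref{lem:diagonal}.
For $i=1$, use $a_1^k=(a_r^{(k+1)/2},a_r^{\ell})\sigma$ and your formula $z_ka_1z_k^{-1}=(z_ka_rz_k^{-1}\gamma^{-\ell},\gamma^\ell)\sigma$. The conjugator $(g_r,\gamma^\ell g_ra_r^{-\ell})=(g_r,g_r)(\id,w)$ works, where $w=g_r^{-1}\gamma^\ell g_ra_r^{-\ell}$ lies in $N$ because $\gamma\equiv a_r\pmod N$ and $G/N$ is abelian. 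Hence this conjugator lies in $G$.

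Closedness of $G$ then gives $z_kGz_k^{-1}\subseteq G$. Applying this to $z_k^{-1}\in\langle\langle\gamma\rangle\rangle z_{k^{-1}}$ gives equality. A compactness argument upgrades the levelwise statement to: $z_ka_iz_k^{-1}$ is $G$-conjugate to $a_i^k$ for each $i$, which yields the image $(k,\dots,k)$ directly.
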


Let $ \mathscr{G}$ denote the subset of $\Omega$ defined as
\begin{equation*}
    \mathscr{G}:=G N(\gamma)
\end{equation*}

\begin{proposition}\label{Gar has finite index}\hfill
    \begin{enumerate}
        \item $\mathscr{G}$ is a subgroup of $N(G)$ and it contains $\ga$.
        \item If $K$ is finitely generated over its prime field, then $\ga$ has finite index in $\mathscr{G}$.
    \end{enumerate}
\end{proposition}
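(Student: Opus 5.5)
For part (1), I will first check that $\mathscr{G}=GN(\gamma)$ is a subgroup of $N(G)$. By Theorem~\ref{PinkNorm2}(1), every $z\in N(\gamma)$ conjugating $\gamma$ to $\gamma^k$ lies in $N(G)$. Hence $N(\gamma)\subseteq N(G)$. Since $G$ is normal in $N(G)$, the product $GN(\gamma)$ of a normal subgroup and a subgroup of $N(G)$ is again a subgroup, and $GN(\gamma)=N(\gamma)G$.

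The main point of (1) is the inclusion $\ga\subseteq\mathscr{G}$. We may work in the conjugate model in which $\gm=G$, so that $\ga\subseteq N(G)$ because $\gm$ is normal in $\ga$. Take $\tau\in\ga$. By Theorem~\ref{PinkNorm2}(2), its image in $N(G)/G\cong(\ZZ_2^\times)^r$ is a diagonal element $(\mu,\ldots,\mu)$. The normalizer of $\langle\langle\gamma\rangle\rangle$ contains $z_\mu$ (Proposition~\ref{normalizer of gamma}). By Theorem~\ref{PinkNorm2}(1), $z_\mu$ also maps to $(\mu,\ldots,\mu)$. Injectivity of $N(G)/G\hookrightarrow(\ZZ_2^\times)^r$ (Theorem~\ref{PinkNorm1}) then gives $\tau G=z_\mu G$, so $\tau\in z_\mu G\subseteq\mathscr{G}$.

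One subtlety is that Theorem~\ref{PinkNorm2}(2) is stated for Galois elements. An arbitrary $\tau\in\ga$ is only the image of some element of $\pi_1^{\acute{e}t}(\PP^1_K\setminus P)$. The exact sequence \eqref{eq:exact} shows that the map $\ga\to\ga/G$ factors through $\Gal(\bar K/K)$, so every coset in $\ga/G$ is the image of some Galois element $\tau'$, and the statement therefore applies to every element of $\ga$. I expect this bookkeeping, namely making sure the relevant model of $\ga$ normalizes the same $G$ that contains $\gamma$, to be the only delicate step. I would handle it by choosing the conjugation so that Pink's $\gm$ is exactly $G$, which is the normalization Pink uses.

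For part (2), I would compute indices inside $N(G)/G$. We have $\mathscr{G}/G\cong N(\gamma)G/G$. By Theorem~\ref{PinkNorm2}(1) together with Proposition~\ref{normalizer of gamma}, this quotient maps isomorphically onto the full diagonal $\{(k,\ldots,k)\}\cong\ZZ_2^\times$: the map is surjective because each $z_k$ is present, and injective because $N(G)/G\to(\ZZ_2^\times)^r$ is injective. Also $\ga/G$ is the image of $\Gal(\bar K/K)$, which is the image of the cyclotomic character $\mu$, and by the last assertion of Theorem~\ref{PinkNorm2} this has finite index in $\ZZ_2^\times$ when $K$ is finitely generated over its prime field. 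Since $G\subseteq\ga\subseteq\mathscr{G}$, we get
\[
[\mathscr{G}:\ga]=[\mathscr{G}/G:\ga/G]<\infty .
\]
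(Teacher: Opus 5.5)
Your proposal is correct and follows essentially the same route as the paper: both work inside Pink's embedding $N(G)/G\hookrightarrow(\ZZ_2^\times)^r$. Both use the surjectivity of $\Gal(\bar K/K)\to\ga/G$, together with the diagonal images of Galois elements and of the $z_k$, to get $\ga\subseteq GN(\gamma)$, and both deduce (2) from the finite index of the cyclotomic image in $\ZZ_2^\times$. Your version of (2), namely $[\mathscr{G}:\ga]=[\mathscr{G}/G:\ga/G]$ with $\mathscr{G}/G$ identified with the full diagonal, is a slightly cleaner statement of the paper's coset count, which loosely speaks of ``finitely many $k$'' rather than finitely many cosets.
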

\begin{proof}
Since $N(\gamma)$ is contained in the normalizer $N(G)$ of $G$, $\mathscr{G}$ is a subgroup of $N(G)$. 

We note that the map $\Gal(\bar{K}/K) \to \ga/G$ is surjective. Let $\alpha \in \ga$. Then there is some $\tau \in \Gal(\bar{K}/K)$ such that the images of $\tau$ and the coset $G\alpha$ agree in $(\ZZ_2^{\times})^{r}$. By the second part of Theorem~\ref{PinkNorm2}, this image is $(k,\ldots,k)$ for some $k \in \ZZ_2^{\times}$. Proposition~\ref{normalizer of gamma} and the first part of Theorem~\ref{PinkNorm2} imply that $G\alpha=Gz_k$ in $N(G)/G$. Since $\mathscr{G}$ contains $N(\gamma)$ and $z_k \in N(\gamma)$, we have that $\alpha \in \mathscr{G}$. 
 
Assume now that $K$ is finitely generated over its prime field and let $\beta \in \mathscr{G}$, then $\beta \ga= z_k\ga$ for some $k \in \ZZ_2^{\times}$. By the last part of Theorem~\ref{PinkNorm2}, there are only finitely many $k \in \ZZ_2^{\times}$ such that $(k,\ldots,k)$ is not in the image of $\ga/G$. Hence $\ga$ is a finite index subgroup of $\mathscr{G}$. 

\end{proof}
Using Proposition~\ref{normalizer of gamma}, we observe that $\mathscr{G}$ is
topologically generated by the set
\[ \{ a_1,\ldots,a_r, z_k \mid k \in \ZZ_2^{\times} \},\]
where $z_k=(z_k,\gamma^{\ell}z_k)$. We will prove that $\mathscr{G}$ contains a dense subset consisting of settled elements. 
\begin{remark}\label{rem:Gar}
    Let $H$ be a subgroup of $\Omega$. If $H$ is densely settled as a subgroup of $\Omega$, then any subgroup of $H$ of finite index is also densely settled. This essentially follows from the fact that a finite index subgroup is open. Hence by Proposition~\ref{Gar has finite index}, if $\mathscr{G}$ is densely settled, and $K$ is finitely generated over its prime field, then $\ga$ is also densely settled.
\end{remark}

\section{Producing settled elements in $\mathscr{G}$}
Theorem~\ref{PinkNorm2} implies that $\mathscr{G}$ is the union of right cosets $Gz_k$ over $k \in \ZZ_2^{\times}$. We claim that each coset
$Gz_k$, for $k\neq \pm 1$ contains sufficiently many settled elements. For example, let $m_1,\ldots,m_r \in \ZZ_2^{\times}$ and consider the product $a_1^{m_1}\ldots a_r^{m_r}$. By Proposition~\ref{prop:odosgn}, it is an odometer in $G$ since $\sgn_n(a_1^{m_1}\ldots a_r^{m_r})=-1$ for all $n\geq 1$. Hence it is settled. However, $a_i$ is not settled for any $i \in \{1,\ldots,r\}$ since it acts identity one one subtree at level one. In Proposition~\ref{prop:azk}, we show that $a_iz_k$ is settled for all $1\leq i \leq r $ and $k \neq \pm 1 \in \ZZ_2^{\times}$.

To exhibit more examples of settled elements in $\mathscr{G}$, we introduce the concept of stable blocks.
Let $\mathscr{U}(\gamma)$ be the subset of $N(\gamma)$ defined by
$$ \mathscr{U}(\gamma):=\{ \gamma^m z_k \quad | \quad  k \neq \pm 1,\; m \in \ZZ_2 \}.
$$
We note that the elements of $\mathscr{U}(\gamma)$ are all uniformly settled by Corollary~\ref{zksummary}. 
\begin{remark}\label{Ugamma}
 Note that if $\alpha \in \mathscr{U}(\gamma)$ then $\alpha=\gamma^mz_k$ for some $m\in \ZZ_2$ and $k\neq \pm 1$. If $m$ is even, then $\alpha =(\gamma^{\frac{m}{2}}z_k, \gamma^{\frac{m}{2}+\ell}z_k)$ and the first descendants of $\alpha$ are $\gamma^{\frac{m}{2}}z_k$ and $\gamma^{\frac{m}{2}+\ell}z_k$. Similarly, if $m \in \ZZ_2^{\times}$, then $$\alpha =(\gamma^{\frac{(m+2\ell+1)}{2}}z_k, \gamma^{\frac{(m-1)}{2}}z_k)\sigma$$  and $\alpha$ has only one first descendant which equals $\gamma^{m+\ell}z_{k^2}$. Hence, in both cases, the first descendants of $\alpha$ are again $\mathscr{U}(\gamma)$.
 
Since $(n+1)st$ descendants of $\alpha$ are the 
 $nth$ descendants of first descendants of $\alpha \in \mathscr{U}(\gamma)$, it follows that for $\alpha \in \mathscr{U}(\gamma)$, all $n$th descendants of $\alpha$ are also in $\mathscr{U}(\gamma)$ for all $n\geq 1$. 
\end{remark} 

\begin{definition}\label{defn:stableblock}
We call an element $\alpha=(\alpha_0,\alpha_1)\tau \in \Omega$ a \emph{block} if $\tau \neq \id$ or  if $\tau=\id$, then at least one of $\alpha_0$ or $\alpha_1$ is in $\mathscr{U}(\gamma)$. We call a block $\alpha \in \Omega$ \emph{stable} if its $n$th descendants are all blocks for any $n\geq 1$.
\end{definition}

From the definition and Remark~\ref{Ugamma}, it immediately follows that the element $\gamma^mz_k$ is a stable block for any $k\neq \pm1 \in \ZZ_2^{\times}$ and any $m \in \ZZ_2$.

\begin{proposition}
    If $\alpha \in \Omega$ is a stable block, then the $n$th descendants of $\alpha$ are also stable blocks for all $n\geq 1$.
\end{proposition}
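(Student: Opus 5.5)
The claim follows almost directly from the definitions, by unwinding the recursive structure of descendants. Let $\alpha$ be a stable block and fix $n\geq 1$, and let $\beta\in\Desc_n(\alpha)$. We must show two things: that $\beta$ is a block, and that every $m$th descendant of $\beta$ is a block for every $m\geq 1$.

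The first property is immediate. Since $\alpha$ is stable, all of its $n$th descendants are blocks, so $\beta$ is a block.

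For the second property, the key step is the transitivity identity
\[
\Desc_m(\beta)\subseteq \Desc_{n+m}(\alpha)\qquad\text{for all } \beta\in\Desc_n(\alpha),\ m\geq 1 .
\]
We prove this by induction on $m$, directly from Definition~\ref{defn:descendant}.

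For $m=1$, the definition gives
\[
\Desc_{n+1}(\alpha)=\bigcup_{\beta'\in\Desc_n(\alpha)}\Desc_1(\beta'),
\]
so $\Desc_1(\beta)\subseteq\Desc_{n+1}(\alpha)$.

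For the inductive step, suppose $\Desc_{m-1}(\beta)\subseteq \Desc_{n+m-1}(\alpha)$. Then
\[
\Desc_m(\beta)=\bigcup_{\delta\in\Desc_{m-1}(\beta)}\Desc_1(\delta)\subseteq\bigcup_{\delta\in\Desc_{n+m-1}(\alpha)}\Desc_1(\delta)=\Desc_{n+m}(\alpha).
\]
This is the same observation the paper already uses in Remark~\ref{Ugamma}, where $(n+1)$st descendants are identified with $n$th descendants of first descendants.

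With the identity in hand, every $m$th descendant of $\beta$ is an $(n+m)$th descendant of $\alpha$. Because $\alpha$ is a stable block, that descendant is a block. Hence all descendants of $\beta$ at every level are blocks, and together with the first property this shows $\beta$ is a stable block.

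No step here is a genuine obstacle. The only point requiring care is that $\Desc_n$ is defined as a union of sets, so the induction must track set inclusion rather than equality. The conclusion also uses nothing about $\mathscr{U}(\gamma)$ beyond the definition of a block.
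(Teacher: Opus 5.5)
Your proof is correct and follows essentially the same idea as the paper: every descendant of a descendant of $\alpha$ is itself a descendant of $\alpha$, so it is a block because $\alpha$ is a stable block. The paper reduces to first descendants and inducts on $n$, whereas you handle $n$th descendants directly by proving the inclusion $\Desc_m(\beta)\subseteq\Desc_{n+m}(\alpha)$, which spells out a step the paper leaves implicit.
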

\begin{proof}
    It is enough to show that the first descendants of $\alpha$ are stable blocks, and the rest follows by induction. If $\beta$ is a descendant of $\alpha$, then it is a block by definition. We need to show that all descendants of $\beta$ are blocks, but these are also descendants of $\alpha$, hence they are also blocks by definition of a stable block.
\end{proof}

The goal of this section is to prove the following key result. 

\begin{theorem}\label{thm:stablesettled}
    Every stable block in $\mathscr{G}$ is settled.
\end{theorem}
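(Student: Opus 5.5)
The plan is to follow the single "non-$\mathscr{U}(\gamma)$" branch of the descendant tree of $\alpha$ and track the proportion of vertices \emph{not} in stable cycles. For $\beta\in\Omega$ write $\delta_n(\beta):=1-s_n(\beta)/2^n\in[0,1]$. By Proposition~\ref{prop:settledsections} there are two cases:
\begin{itemize}
\item if $\beta=(\beta_0,\beta_1)\sigma$, then $\delta_n(\beta)=\delta_{n-1}(\beta_0\beta_1)$;
\item if $\beta=(\beta_0,\beta_1)$, then $\delta_n(\beta)=\tfrac12\bigl(\delta_{n-1}(\beta_0)+\delta_{n-1}(\beta_1)\bigr)$.
\end{itemize}

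\textbf{Building the path.} Starting from a stable block $\beta_0:=\alpha$, I build a sequence $\beta_0,\beta_1,\dots$ with $\beta_{j+1}\in\Desc_1(\beta_j)$. If $\beta_j$ acts nontrivially on level $1$, let $\beta_{j+1}$ be its unique first descendant. If $\beta_j=(\beta_{j,0},\beta_{j,1})$, then, since $\beta_j$ is a block, one section lies in $\mathscr{U}(\gamma)$; call it the side element $u_j$ and let $\beta_{j+1}$ be the other section. If both sections lie in $\mathscr{U}(\gamma)$, the path stops, because both are settled by Corollary~\ref{zksummary}. Each $\beta_j$ is a $j$th descendant of $\alpha$, hence a block. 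Let $i_j$ be the number of level-one-trivial steps among the first $j$ steps. Unwinding the recursion gives
\[
\delta_n(\alpha)=\sum_{j\in J,\ j<n} 2^{-(i_j+1)}\,\delta_{n-j-1}(u_j)\;+\;2^{-i_n}\,\delta_{n-n}(\beta_n),
\]
where $J$ is the set of trivial steps. The last term is bounded by $2^{-i_n}$.

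\textbf{Case A: infinitely many trivial steps.} Then $i_n\to\infty$, so the last term tends to $0$. In the sum, each fixed $j$ has $\delta_{n-j-1}(u_j)\to 0$ as $n\to\infty$, because $u_j\in\mathscr{U}(\gamma)$ is (uniformly) settled by Corollary~\ref{zksummary}. The weights $2^{-(i_j+1)}$, $j\in J$, are distinct powers $2^{-1},2^{-2},\dots$, so they sum to at most $1$, and each term is dominated by its weight. Dominated convergence (Tannery's theorem) then gives $\delta_n(\alpha)\to0$, i.e.\ $\alpha$ is settled.

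I deliberately avoid using the explicit constants $2^{\nu+1}$ here. Along the path the index $k$ gets squared at nontrivial steps (Remark~\ref{Ugamma}), so these constants are not uniformly bounded. Only the per-term convergence plus the summable weights is needed.

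\textbf{Case B: finitely many trivial steps.} Then from some index $N$ on every $\beta_j$ with $j\ge N$ acts as $\sigma$ on level $1$. By Remark~\ref{sgndecomp}, $\sgn_n(\beta_j)=\sgn_{n-1}(\beta_{j+1})$ for $n\ge2$, and $\sgn_1(\beta_j)=-1$. By induction on $n$, $\sgn_n(\beta_N)=-1$ for all $n$, so $\beta_N$ is an odometer by Proposition~\ref{prop:odosgn} and hence strongly settled. Then $\delta_{n-N}(\beta_N)=0$ for all large $n$, and the last term vanishes; the finite sum is handled as in Case A. (Equivalently, $\beta_N$ settled and the side elements settled give that $\alpha$ is settled by repeated use of Theorem~\ref{thm:descsettled}.)

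\textbf{Main obstacle and the role of $\mathscr{G}$.} The step I expect to be delicate is the bookkeeping of the unwound recursion: checking that the level shifts $n-j-1$ and the weights $2^{-(i_j+1)}$ are exactly right, and that the terminating case (both sections in $\mathscr{U}(\gamma)$) fits the same formula. Membership in $\mathscr{G}$ seems needed only to guarantee that descendants stay in the group under consideration, which uses self-similarity of $\mathscr{G}$. The settledness argument itself should go through for any stable block.
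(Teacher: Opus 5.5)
Your argument is correct, and it takes a genuinely different and noticeably simpler route than the paper.

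\textbf{The paper's route.} After disposing of the case $D_n(\alpha)=\emptyset$, the paper splits according to the sign sequence $d(\alpha)_n=\sgn_1(\beta_n)$ of the unique non-$\mathscr{U}(\gamma)$ descendant. The three cases are: eventually $-1$ (the odometer case, matching your Case B); eventually $1$ (Proposition~\ref{lem:111}); and both values infinitely often (Lemma~\ref{lem:estimate}). For the last two cases it uses $\alpha\in\mathscr{G}$, via Lemma~\ref{lem:cosetHzk}, to place the descendants in the cosets $Gz_{k^{2^j}}$. It then feeds in the explicit bound $2^{\nu+1}$ from Corollary~\ref{zksummary}, tracking how $\nu$ grows with the number of preceding level-one-nontrivial steps (the run lengths $r_t,m_t$). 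This yields explicit lower bounds such as $s_n(\alpha)\ge 2^n-n2^{\nu+1}$.

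\textbf{Your route.} Your unwound identity has weights satisfying $\sum_{j\in J,\,j<N}2^{-(i_j+1)}+2^{-i_N}=1$. Combined with Tannery's theorem, it handles both of those cases at once. It needs only that each element of $\mathscr{U}(\gamma)$ is settled, not the uniform constants. This sidesteps the growth of $\nu$ under squaring of $k$, which you correctly identified.

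\textbf{What each buys.} As you suspected, membership in $\mathscr{G}$ is not used at all. Neither is Pink's description of $N(G)/G$ or the coset bookkeeping. So your proof shows that every stable block in $\Omega$ is settled. What the paper's route buys in exchange is quantitative information: a rate at which $s_n(\alpha)/2^n\to 1$ in terms of $\nu$ and the run lengths. Your dominated-convergence argument does not provide this, though the rest of the paper never uses it.

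\textbf{A small bookkeeping remark.} In Case B you use the identity truncated at step $N$ rather than at step $n$. It holds for every $N\le n$ by the same induction. In any case, your sign induction (run simultaneously over all $j\ge N$) shows every $\beta_j$ with $j\ge N$ is an odometer, so $\delta_0(\beta_n)=0$ for $n\ge N$ and the displayed version works too.
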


For $\alpha \in \Omega$, let $D_n(\alpha)$ be the set of $n$th descendants of $\alpha$ that are not contained in $\mathscr{U}(\gamma)$, that is,
\[ D_n(\alpha):=\{ \beta \in \Desc_n(\alpha) \; | \; \beta \not \in \mathscr{U}(\gamma)\}. \]

\begin{lemma}
     If $\alpha \in \Omega $ is a stable block, then for all $n\geq 1$, $D_n(\alpha)$ has at most one element.
\end{lemma}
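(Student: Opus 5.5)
We argue by induction on $n$, using only the definition of a block and the fact (Remark~\ref{Ugamma}) that the set $\mathscr{U}(\gamma)$ is closed under taking first descendants. The descendants of $\alpha$ outside $\mathscr{U}(\gamma)$ form a tree: every element of $D_{n}(\alpha)$ is a first descendant of some element of $\Desc_{n-1}(\alpha)$. Since no element of $\mathscr{U}(\gamma)$ has a first descendant outside $\mathscr{U}(\gamma)$, each element of $D_n(\alpha)$ must be a first descendant of an element of $D_{n-1}(\alpha)$. Concretely, we prove
\[
D_n(\alpha)\subseteq \bigcup_{\beta\in D_{n-1}(\alpha)}\Desc_1(\beta),
\]
where we set $D_0(\alpha)=\{\alpha\}$ (or $\emptyset$ if $\alpha\in\mathscr{U}(\gamma)$).

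\textbf{Key step.} Every block $\beta$ has at most one first descendant outside $\mathscr{U}(\gamma)$. Write $\beta=(\beta_0,\beta_1)\tau$. If $\tau\neq\id$, then $\Desc_1(\beta)=\{\beta_0\beta_1\}$ is a single element, so there is nothing to prove. If $\tau=\id$, then $\Desc_1(\beta)=\{\beta_0,\beta_1\}$, and by the definition of a block at least one of $\beta_0,\beta_1$ lies in $\mathscr{U}(\gamma)$; hence at most one does not.

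\textbf{Induction.}
\begin{enumerate}
\item For $n=1$: $\alpha$ is itself a block, so $|D_1(\alpha)|\le 1$ by the key step.
\item Suppose $|D_{n-1}(\alpha)|\le 1$. If $D_{n-1}(\alpha)$ is empty, then every $(n-1)$st descendant lies in $\mathscr{U}(\gamma)$, so by Remark~\ref{Ugamma} all $n$th descendants do too, and $D_n(\alpha)=\emptyset$.
\item Otherwise $D_{n-1}(\alpha)=\{\beta\}$. The displayed inclusion gives $D_n(\alpha)\subseteq \Desc_1(\beta)\setminus\mathscr{U}(\gamma)$. Since $\beta$ is an $(n-1)$st descendant of a stable block, it is a block by the definition of stable block (or, for $n-1=0$, it is $\alpha$ itself). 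The key step then gives $|D_n(\alpha)|\le 1$.
\end{enumerate}

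The only delicate point is that $\Desc_n$ is treated as a set, not a multiset. If the two sections $\beta_0$ and $\beta_1$ coincide, counting elements of a set only makes the bound easier, so the statement holds as written. The closure property of $\mathscr{U}(\gamma)$ is exactly Remark~\ref{Ugamma}, so no real obstacle remains.
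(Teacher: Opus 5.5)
Your proof is correct and follows the paper's argument: the base case uses the block condition on $\alpha$, and the inductive step uses the closure of $\mathscr{U}(\gamma)$ under descendants (Remark~\ref{Ugamma}) to get $D_{n}(\alpha)\subseteq \Desc_1(\beta)$ when $D_{n-1}(\alpha)=\{\beta\}$. You also make explicit two points the paper leaves implicit: that $\beta$ is itself a block because it is a descendant of a stable block, and how the case $D_{n-1}(\alpha)=\emptyset$ is handled.
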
 
\begin{proof}
    Let $\alpha$ be a stable block. By definition, if there are two elements in $\Desc_1(\alpha)$, then one of them is in $\mathscr{U}(\gamma)$. Hence $|D_1(\alpha)| \leq 1$.
    By Remark~\ref{Ugamma}, the descendants of the elements in $\mathscr{U}(\gamma)$ are again in $\mathscr{U}(\gamma)$. Hence if $D_n(\alpha)=\{\beta \}$ for some $n\geq 1$, then $D_{n+1}(\alpha)$ is contained in $\Desc_1(\beta)$. The statement follows by induction on $n$.
\end{proof}

\subsection{Proof of Theorem~\ref{thm:stablesettled}.}
We will now prove Theorem~\ref{thm:stablesettled} in a sequence of lemmas depending on the behaviour of the set $D_n(\alpha)$. 
\begin{lemma}\label{lem:Dempty}
  Let $\alpha \in \Omega$. If $D_n(\alpha)$ is empty for some $n\geq 1$, then $\alpha$ is settled. 
\end{lemma}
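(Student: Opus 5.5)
If $D_n(\alpha)$ is empty, then every $n$th descendant of $\alpha$ lies in $\mathscr{U}(\gamma)$. The plan is therefore to reduce the statement to the implication (3) $\Rightarrow$ (1) of Theorem~\ref{thm:descsettled}, together with the fact that every element of $\mathscr{U}(\gamma)$ is settled.

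First, I will check that every element of $\mathscr{U}(\gamma)$ is settled. An element $\beta\in\mathscr{U}(\gamma)$ has the form $\beta=\gamma^m z_k$ with $m\in\ZZ_2$ and $k\neq\pm1$. Corollary~\ref{zksummary} (equivalently Theorem~\ref{thm:classificationzk}) shows that $\beta$ is uniformly settled. Concretely, with $\nu=v_2(\frac{k^2-1}{4})$, at most $2^{\nu+1}$ vertices at level $n$ fail to lie in a stable cycle, for every $n\geq \nu+1$. This gives
\[
\frac{s_n(\beta)}{2^n}\geq 1-\frac{2^{\nu+1}}{2^n}\longrightarrow 1 \quad (n\to\infty),
\]
so $\beta$ is settled in the sense of Definition~\ref{defn:settled}.

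Next, suppose $D_n(\alpha)=\emptyset$ for some $n\geq 1$. By the definition of $D_n(\alpha)$, every element of $\Desc_n(\alpha)$ lies in $\mathscr{U}(\gamma)$, so by the previous step all $n$th descendants of $\alpha$ are settled. Condition (3) of Theorem~\ref{thm:descsettled} therefore holds for this $n$, and the implication (3) $\Rightarrow$ (1) gives that $\alpha$ is settled.

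I do not expect a serious obstacle. The one point to verify is that Theorem~\ref{thm:descsettled} needs only that $\Desc_n(\alpha)$ is a nonempty finite set of settled elements, which is automatic since $\Desc_n(\alpha)$ has between $1$ and $2^n$ elements. It is also worth noting how this works at the level of counts: iterating Proposition~\ref{prop:settledsections}, $s_N(\alpha)$ is a positive combination $\sum_\beta c_\beta\, s_{N-n}(\beta)$ over $\beta\in\Desc_n(\alpha)$, with total weight normalized so that $s_N(\alpha)/2^N$ is a weighted average of the ratios $s_{N-n}(\beta)/2^{N-n}$. Each of these ratios tends to $1$, so $s_N(\alpha)/2^N\to 1$. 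In fact this also shows that $\alpha$ is uniformly settled, since the number of exceptional vertices is bounded by a finite sum of the constants $2^{\nu+1}$ coming from the finitely many descendants.
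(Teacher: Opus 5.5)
Your proof is correct and is the paper's argument: every element of $\Desc_n(\alpha)$ lies in $\mathscr{U}(\gamma)$ and is therefore settled by Corollary~\ref{zksummary}, and the implication (3)$\Rightarrow$(1) of Theorem~\ref{thm:descsettled} then gives that $\alpha$ is settled. Your closing aside on uniform settledness also holds, provided each descendant's exceptional count is weighted by its multiplicity $c_\beta$; since $c_\beta \le 2^n$, the total is still bounded.
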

\begin{proof}
   If $D_n(\alpha)$ is empty for some $n\geq 1$, then every element in $\Desc_n(\alpha)$ is settled since they are all in $\mathscr{U}(\gamma)$. Hence by Theorem~\ref{thm:descsettled}, $\alpha$ is settled. 
\end{proof}
We observe that if $D_n(\alpha)$ is empty for some $n\geq 1$, then $D_m(\alpha)=\emptyset$ for all $m\geq n$.

\begin{lemma}\label{lem:Dnsize1}
  Let $\alpha \in \Omega$ and $D_n(\alpha)=\{\beta_n\}$ for all $n\geq 1$. Then $\alpha$ is settled iff $\beta_n$ is settled for some $n\geq 1$.
\end{lemma}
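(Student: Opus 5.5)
We reduce everything to Theorem~\ref{thm:descsettled}, using that every descendant other than $\beta_n$ lies in $\mathscr{U}(\gamma)$ and is therefore settled by Corollary~\ref{zksummary}.

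\emph{Step 1: the descendant sets.} Fix $n\geq 1$. By hypothesis $D_n(\alpha)=\{\beta_n\}$. Hence every element of $\Desc_n(\alpha)$ other than $\beta_n$ is of the form $\gamma^m z_k$ with $k\neq\pm1$. By Corollary~\ref{zksummary}(1), each such element is uniformly settled, in particular settled. So $\Desc_n(\alpha)$ consists of $\beta_n$ together with settled elements.

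\emph{Step 2: the equivalence.} Suppose $\beta_n$ is settled for some $n$. Then all $n$th descendants of $\alpha$ are settled, and the implication (3)$\Rightarrow$(1) of Theorem~\ref{thm:descsettled} shows that $\alpha$ is settled. Conversely, suppose $\alpha$ is settled. The implication (1)$\Rightarrow$(2) of Theorem~\ref{thm:descsettled} shows that every $n$th descendant is settled, for every $n$. In particular each $\beta_n$, and so some $\beta_n$, is settled. This already gives the stronger conclusion that $\alpha$ is settled if and only if $\beta_n$ is settled for all $n$.

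\emph{Step 3: the case of equal descendants.} One subtlety is that $\Desc_n(\alpha)$ is defined as a set, so two sections that happen to coincide are counted once. This does not affect the argument. In the case $\tau=\id$, Proposition~\ref{prop:settledsections} gives $s_n(\alpha)=s_{n-1}(\alpha_0)+s_{n-1}(\alpha_1)$ regardless of whether $\alpha_0=\alpha_1$. So the averaging argument in the proof of Theorem~\ref{thm:descsettled} still applies.

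There is no real obstacle here. The only points to check are that membership in $\mathscr{U}(\gamma)$ implies settledness, which is Corollary~\ref{zksummary}, and that Theorem~\ref{thm:descsettled} applies with "some $n$" in one direction and "all $n$" in the other.
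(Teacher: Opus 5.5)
Your proposal is correct and follows essentially the same route as the paper: every element of $\Desc_n(\alpha)$ other than $\beta_n$ lies in $\mathscr{U}(\gamma)$ and is therefore settled by Corollary~\ref{zksummary}, so both directions follow from Theorem~\ref{thm:descsettled}. Your remark about coinciding sections and your observation that the conclusion actually holds for all $n$ are harmless additions that the paper leaves implicit.
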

\begin{proof}
    This follows from the fact that $\Desc_n(\alpha)$ consists of elements from $\mathscr{U}(\gamma)$ and $\beta_n$. Hence, the statement follows from Theorem~\ref{thm:descsettled} and the fact that every element of $\mathscr{U}(\gamma)$ is settled.
\end{proof}

For $\alpha \in \Omega$, assume that $D_n(\alpha)=\{\beta_n\}$ as in Lemma~\ref{lem:Dnsize1}. Then we may assign a sequence $(d(\alpha)_n)_{n\geq 0}$ to $\alpha$ as follows: $d(\alpha)_0=\sgn_1(\alpha)$ and 
 $$ d(\alpha)_n:=\sgn_1(\beta_n)$$
 for all $n\geq 1$.

 We note that by Remark~\ref{Ugamma}, the descendants of an element in $\mathscr{U}(\gamma)$ are again in $\mathscr{U}(\gamma)$, and hence $\beta_{n+1}$ is a first descendant of $\beta_n$ for all $n\geq 1$. 

\begin{lemma}\label{lem:-1-1-1}
   Let $\alpha \in \Omega$ and assume that $D_n(\alpha)=\{\beta_n\}$ for all $n\geq 1$.  
   If there is an integer $m\geq 1$ such that $d(\alpha)_n=-1$ for all $n\geq m$, then $\alpha$ is settled.
\end{lemma}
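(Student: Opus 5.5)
The plan is to show that $\beta_m$ is an odometer. By Lemma~\ref{lem:Dnsize1}, $\alpha$ is settled as soon as some $\beta_n$ is settled, and by Remark~\ref{rk:odosettled} an odometer is strongly settled. So the whole argument reduces to a sign computation along the chain of descendants $\beta_m,\beta_{m+1},\dots$.

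First I will pin down the structure of this chain. For $n\ge m$ we have $d(\alpha)_n=\sgn_1(\beta_n)=-1$, so $\beta_n=(x_n,y_n)\sigma$ acts nontrivially on level one. By Definition~\ref{defn:descendant}, $\Desc_1(\beta_n)=\{x_ny_n\}$ is then a singleton. The paper has already observed that $\beta_{n+1}$ is a first descendant of $\beta_n$ (via Remark~\ref{Ugamma}, since descendants of elements of $\mathscr{U}(\gamma)$ stay in $\mathscr{U}(\gamma)$). Hence $\beta_{n+1}=x_ny_n$ for every $n\ge m$.

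Next I will compute the signs of $\beta_m$ at every level, using Remark~\ref{sgndecomp}. For $j\ge 2$ and any $n\ge m$,
\[
\sgn_j(\beta_n)=\sgn_{j-1}(x_ny_n)=\sgn_{j-1}(\beta_{n+1}).
\]
Iterating this identity gives $\sgn_j(\beta_m)=\sgn_1(\beta_{m+j-1})=d(\alpha)_{m+j-1}=-1$ for all $j\ge 1$. The case $j=1$ is just the hypothesis $d(\alpha)_m=-1$. Formally this is an induction on $j$, carried out simultaneously for all $n\ge m$.

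Finally, Proposition~\ref{prop:odosgn} gives that $\beta_m$ is an odometer, hence strongly settled, and Lemma~\ref{lem:Dnsize1} gives that $\alpha$ is settled. The only point I expect to need care is the identification $\Desc_1(\beta_n)=\{\beta_{n+1}\}$. It requires both that $\beta_n$ has a single descendant, which holds because it swaps the first level, and that this descendant is exactly $\beta_{n+1}$ rather than some element of $\mathscr{U}(\gamma)$. The latter holds because $\Desc_{n+1}(\alpha)$ contains $\beta_{n+1}\notin\mathscr{U}(\gamma)$, and this element must arise from $\beta_n$, since all other $n$th descendants lie in $\mathscr{U}(\gamma)$ and are closed under taking descendants.
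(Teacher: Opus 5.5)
Your proposal is correct and follows the paper's proof essentially verbatim. Like the paper, you use Remark~\ref{sgndecomp} along the chain $\beta_m,\beta_{m+1},\dots$ to get $\sgn_j(\beta_m)=\sgn_1(\beta_{m+j-1})=-1$ for all $j\ge 1$, conclude that $\beta_m$ is an odometer by Proposition~\ref{prop:odosgn}, and finish with Lemma~\ref{lem:Dnsize1}. Your extra justification that $\Desc_1(\beta_n)=\{\beta_{n+1}\}$, via closure of $\mathscr{U}(\gamma)$ under descendants, spells out a step the paper only asserts.
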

\begin{proof}
Assume $d(\alpha)_n=-1$ for all $n\geq m$. By the definition of $d(\alpha)_n$, $\beta_n\restr{T_1} \neq \id$ and hence $\Desc_1(\beta_n)$ contains only one element, namely $\beta_{n+1}$. Therefore, we obtain using Remark~\ref{sgndecomp} that
   $$\sgn_k(\beta_m)=\sgn_1(\beta_{m+k-1})=-1$$ for all $k\geq 1$. By Proposition~\ref{prop:odosgn}, $\beta_m$ is settled and by Lemma~\ref{lem:Dnsize1}, $\alpha$ is settled. 
   
\end{proof}
To estimate the number of stable cycles, we first need to know which coset the descendants of an element belong to.
\begin{lemma}\label{lem:cosetHzk}
    Let $k \in \ZZ_2^{\times}$. Let $\alpha \in \mathscr{G}$ be an element in the coset $Gz_k$.
    \begin{enumerate}
        \item If $\alpha \restr{T_1}=\id$, then the first descendants of $\alpha$ are in the coset $Gz_k$.
        \item If $\alpha \restr{T_1}\neq \id$, then the first descendants of $\alpha$ are in the coset $Gz_{k^2}$.
    \end{enumerate}
\end{lemma}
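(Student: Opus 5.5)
Write $\alpha = g z_k$ with $g\in G$. The plan is to compute the sections of $\alpha$ directly from the wreath recursion and identify the coset of each descendant. Two inputs are needed. The first is the recursion $z_k=(z_k,\gamma^{\ell}z_k)$ with $\gamma\in G$. The second is a description of the sections of elements of $G$. Every generator $a_i$ has sections in $\{a_1,\dots,a_r,\id\}\subset G$, so every $g\in G$ has both sections in $G$; this uses that $G$ is self-similar, which passes to the closure by a level-by-level argument. Moreover, if $g\restr{T_1}=\id$ then $g=(g_0,g_1)$ with $g_0,g_1\in G$, and if $g\restr{T_1}\neq\id$ then $g=(g_0,g_1)\sigma$ with $g_0,g_1\in G$.

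\emph{Case (1).} Since $z_k$ acts trivially on level $1$, $\alpha\restr{T_1}=\id$ forces $g\restr{T_1}=\id$, so $g=(g_0,g_1)$. Then
\[
\alpha=(g_0,g_1)(z_k,\gamma^{\ell}z_k)=(g_0z_k,\; g_1\gamma^{\ell}z_k).
\]
Both descendants lie in $Gz_k$, since $g_1\gamma^{\ell}\in G$.

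\emph{Case (2).} Here $g=(g_0,g_1)\sigma$, and the semidirect product law \eqref{semi} gives
\[
\alpha=(g_0\gamma^{\ell}z_k,\; g_1z_k)\sigma.
\]
The unique first descendant is $g_0\gamma^{\ell}z_k\,g_1z_k$. I rewrite $z_kg_1=(z_kg_1z_k^{-1})z_k$. Because $z_k\in N(\gamma)\subseteq N(G)$ by Theorem~\ref{PinkNorm2}(1), the element $z_kg_1z_k^{-1}$ lies in $G$, so the descendant equals $h\,z_k^2$ with $h\in G$. By Remark~\ref{rk:odosettled}, $z_k^2=z_{k^2}$, so the descendant lies in $Gz_{k^2}$.

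\emph{Main obstacle.} The delicate point is justifying that sections of elements of the closed group $G$ lie in $G$. For the abstract group generated by the $a_i$ this is immediate from the recursions \eqref{generators}. For the closure I would argue as follows. The map sending an element acting trivially on level $1$ to its sections is continuous, and so is $(g_0,g_1)\sigma\mapsto(g_0,g_1)$. Hence a limit of elements of the dense subgroup $\langle a_1,\ldots,a_r\rangle$ with the same level-$1$ action has sections that are limits of elements of $G$. These sections therefore lie in $G$, because $G$ is closed. Everything else is the routine computation above.
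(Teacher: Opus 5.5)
Your proof is correct and follows essentially the same route as the paper. You write $\alpha=gz_k$, use $z_k=(z_k,\gamma^{\ell}z_k)$ together with self-similarity of $G$ to compute the sections, and in the second case conclude that the descendant lies in $Gz_kGz_k=Gz_k^2=Gz_{k^2}$. You are slightly more careful than the paper, which leaves two points implicit: the appeal to $z_k\in N(G)$ (Theorem~\ref{PinkNorm2}) behind that coset identity, and the passage of self-similarity from $\langle a_1,\dots,a_r\rangle$ to its closure.
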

\begin{proof} 

Since $z_k\restr{T_1}=\id$, we have $\alpha =(\beta_0,\beta_1)(z_k,\gamma^\ell z_k)$  or $\alpha =(\beta_0,\beta_1)\sigma(z_k,\gamma^\ell z_k)$ for some $\beta_0,\beta_1 \in G$ as $G$ is self-similar. We find that either
\[ \alpha=(\beta_0 z_k, \beta_1 \gamma^\ell z_k) \] or
\[ \alpha=(\beta_0\gamma^{\ell}z_k,\beta_1 z_k)\sigma. \] In the first case, the first descendants are in $Gz_k$, in the second case, the first descendant of $\alpha$ is 
$\beta_0\gamma^{\ell}z_k\beta_1 z_k$ which is in the coset $Gz_kGz_k=Gz_k^2=Gz_{k^2}$.
\end{proof}

\begin{lemma}\label{111estimate}
    Let $\alpha \in \mathscr{G}$ be a stable block such that $\alpha \in Gz_k$ for some $k \neq \pm 1$. Let $\nu$ be the $2$-adic valuation of $\frac{k^2-1}{4}$. Assume that $D_n(\alpha)=\{\beta_n\}$ for all $n\geq 1$. If there is some $t \geq 0$ such that $d(\alpha)_i=1$ for all $i\leq t$, then for any $n \geq \nu +1 + t$, we have 
    $$ s_n(\alpha) \geq s_{n-t}(\beta_t) + 2^{n}-2^{n-t}-t2^{\nu+1}. $$
\end{lemma}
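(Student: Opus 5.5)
Since $d(\alpha)_0=\sgn_1(\alpha)=1$ and $d(\alpha)_i=\sgn_1(\beta_i)=1$ for $1\le i\le t$, none of $\alpha,\beta_1,\dots,\beta_t$ moves level $1$. I will induct on $t$, peeling off one level at a time with Proposition~\ref{prop:settledsections}(1). The case $t=0$ is trivial with $\beta_0:=\alpha$, since the inequality reads $s_n(\alpha)\ge s_n(\alpha)$.

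For the inductive step, suppose $d(\alpha)_i=1$ for all $i\le t$ with $t\ge 1$. Write $\beta_0=\alpha$. Because $\alpha$ is a stable block with $\alpha\restr{T_1}=\id$, its two first descendants are $\beta_1$ and some $u_1\in\mathscr{U}(\gamma)$. More generally, for $1\le j\le t$ the element $\beta_{j-1}$ acts trivially on level $1$, so $\Desc_1(\beta_{j-1})=\{\beta_j,u_j\}$ with $u_j\in\mathscr{U}(\gamma)$. Here we use that $D_j(\alpha)=\{\beta_j\}$ and that descendants of elements of $\mathscr{U}(\gamma)$ stay in $\mathscr{U}(\gamma)$ (Remark~\ref{Ugamma}). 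Applying Proposition~\ref{prop:settledsections}(1) repeatedly gives
\[
s_n(\alpha)=s_{n-t}(\beta_t)+\sum_{j=1}^{t}s_{n-j}(u_j).
\]

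The key point is to bound $s_{n-j}(u_j)$ uniformly using Corollary~\ref{zksummary}. The constant there depends on the $k$-parameter of $u_j$, so I must identify it. By Lemma~\ref{lem:cosetHzk}(1), since $\alpha,\beta_1,\dots,\beta_{t-1}$ all act trivially on level $1$, every first descendant of these elements lies in the coset $Gz_k$. Hence $u_j\in Gz_k\cap\mathscr{U}(\gamma)$, say $u_j=\gamma^{m_j}z_{k_j}$ with $k_j\ne\pm1$.

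I then need $k_j=k$, or at least that $k_j$ has the same $\nu$. Since $\gamma\in G$, we have $u_j\in Gz_{k_j}$, so $Gz_{k_j}=Gz_k$. Under the isomorphism of Theorem~\ref{PinkNorm1}, Theorem~\ref{PinkNorm2}(1) sends $z_{k_j}$ to $(k_j,\dots,k_j)$ and $z_k$ to $(k,\dots,k)$. These cosets coincide, so $k_j=k$. This identification is the step I expect to need the most care.

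With $k_j=k$, Corollary~\ref{zksummary} gives $s_{n-j}(u_j)\ge 2^{n-j}-2^{\nu+1}$ whenever $n-j\ge\nu+1$. This holds because $j\le t$ and $n\ge\nu+1+t$.

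Summing over $j$ and using $\sum_{j=1}^t2^{n-j}=2^n-2^{n-t}$ yields
\[
s_n(\alpha)\ge s_{n-t}(\beta_t)+2^n-2^{n-t}-t\,2^{\nu+1},
\]
as required. If one prefers to avoid the explicit unrolling, the same computation can be phrased as an induction on $t$, applying the statement to $\beta_1$. Note that $\beta_1$ is again a stable block in $Gz_k$, with $D_n(\beta_1)=\{\beta_{n+1}\}$.
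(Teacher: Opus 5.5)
Your proof is correct and follows the paper's argument: you peel off one level at a time with Proposition~\ref{prop:settledsections}(1), place each $\mathscr{U}(\gamma)$-descendant in $Gz_k$ via Lemma~\ref{lem:cosetHzk}, and bound it with Corollary~\ref{zksummary}. Your use of $\gamma\in G$ together with Theorem~\ref{PinkNorm2}(1), to show that such a descendant $\gamma^{m_j}z_{k_j}$ has $k_j=k$ (so the same $\nu$ applies), makes explicit a step the paper leaves implicit.
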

\begin{proof} 
Since $d(\alpha)_{i}=1$ for all $i\leq t$, $i$th descendants of $\alpha$ are all contained in the coset $Gz_k$ for all $1 \leq i\leq t$ by Lemma~\ref{lem:cosetHzk}. 

Let $\nu=v_2(\frac{k^2-1}{4})$. If a descendant $\beta$ of $\alpha$ is in $\mathscr{U}(\gamma)$, then by Corollary~\ref{zksummary}, the number of vertices at level $n$ that are not in a stable cycle of $\beta$ is at most $2^{\nu+1}$ for any $n\geq \nu+1$.

Let $n\geq \nu +1 + t$. Using Proposition~\ref{prop:settledsections}, we calculate
\begin{equation}
    \begin{split}
        s_n(\alpha) &\geq s_{n-1}(\beta_1)+(2^{n-1}-2^{\nu+1}) \\
                    &\geq s_{n-2}(\beta_2)+(2^{n-2}-2^{\nu +1})+(2^{n-1}-2^{\nu +1} )\\
                    \vdots \\
                    &\geq s_{n-t}(\beta_t)+(2^{n-t}-2^{\nu+1})+(2^{n- t+1}-2^{\nu+1})+\ldots +(2^{n-1}-2^{\nu+1}) 
   \end{split}
\end{equation}

Since $n \geq \nu +1 +t$, each summand in the sum is nonnegative. Hence $s_n(\alpha) \geq s_{n-t}(\beta_t) + 2^{n}-2^{n-t}-t2^{\nu+1} $.
\end{proof}

\begin{proposition}\label{lem:111}
    Let $\alpha \in \mathscr{G}$ be a stable block. Assume $|D_n(\alpha)|=1$ for all $n\geq 1$. If there is some $m\geq 0$ such that $d(\alpha)_n=1$ for all $n\geq m$, then $\alpha$ is settled. 
\end{proposition}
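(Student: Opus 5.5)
The plan is to reduce to the case $m=0$ by passing to the descendant $\beta_m$. Then Lemma~\ref{111estimate} is applied with $t$ arbitrary, and we let first $n\to\infty$ and then $t\to\infty$.

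\emph{Step 1 (reduction to $\beta_m$).} By Lemma~\ref{lem:Dnsize1}, $\alpha$ is settled if and only if $\beta_m$ is settled (for $m=0$ we simply keep $\alpha$). So it suffices to show that $\beta:=\beta_m$ is settled. First, $\beta$ is a stable block, by the proposition that descendants of stable blocks are stable blocks. Second, $\beta\in\mathscr{G}$: by Lemma~\ref{lem:cosetHzk} and self-similarity of $G$, every descendant of an element of some coset $Gz_k$ lies in a coset $Gz_{k'}$ with $k'\in\{k,k^2,k^4,\dots\}$. Third, since every element of $\mathscr{U}(\gamma)$ stays in $\mathscr{U}(\gamma)$ under descendants (Remark~\ref{Ugamma}), we have $D_n(\beta)=\{\beta_{m+n}\}$ for all $n\ge 1$. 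Hence $d(\beta)_i=d(\alpha)_{m+i}=1$ for all $i\ge 0$, including $d(\beta)_0=\sgn_1(\beta_m)=d(\alpha)_m=1$.

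\emph{Step 2 (the coset index is not $\pm1$).} To invoke Lemma~\ref{111estimate} we need $\beta\in Gz_{k'}$ with $k'\neq\pm1$, and I expect this to be the main point needing care. Since $d(\beta)_0=1$, $\beta$ acts trivially on level one. Because $\beta$ is a block, one of its two first descendants lies in $\mathscr{U}(\gamma)$, so it has the form $\gamma^{s}z_{k''}$ with $k''\neq\pm1$. That descendant lies in $Gz_{k''}$, since $\gamma\in G$.

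By Lemma~\ref{lem:cosetHzk}(1), both first descendants of $\beta$ lie in the same coset $Gz_{k'}$ as $\beta$. Theorems~\ref{PinkNorm1} and~\ref{PinkNorm2}(1) show that the cosets $Gz_k$ are pairwise distinct for distinct $k$, because their images $(k,\dots,k)$ in $(\ZZ_2^\times)^r$ are distinct. Hence $k'=k''\neq\pm1$. Set $\nu:=v_2\big((k'^2-1)/4\big)$, which is now a fixed finite number.

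\emph{Step 3 (limit argument).} Fix $t\ge 0$. Lemma~\ref{111estimate} applied to $\beta$ gives, for all $n\ge \nu+1+t$,
\[
s_n(\beta)\ \ge\ s_{n-t}(\beta_{m+t})+2^n-2^{n-t}-t2^{\nu+1}\ \ge\ 2^n-2^{n-t}-t2^{\nu+1}.
\]
Dividing by $2^n$ gives
\[
\frac{s_n(\beta)}{2^n}\ \ge\ 1-2^{-t}-t2^{\nu+1-n}.
\]
The sequence $s_n(\beta)/2^n$ is bounded by $1$ and converges, as shown after Definition~\ref{defn:settled}. Letting $n\to\infty$ yields $\lim_n s_n(\beta)/2^n\ge 1-2^{-t}$ for every $t\ge 0$. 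Letting $t\to\infty$ then gives that the limit equals $1$, so $\beta$ is settled, and therefore $\alpha$ is settled by Step 1.
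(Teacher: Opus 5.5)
Your proposal is correct and follows the paper's proof: you reduce to $\beta_m$ via Lemma~\ref{lem:Dnsize1} and then apply Lemma~\ref{111estimate}. The only difference in route is that the paper takes $t=n-(\nu+1)$ to get the explicit bound $s_n(\alpha)\ge 2^n-n2^{\nu+1}$, whereas you fix $t$ and take a double limit. Your Step 2, which uses Lemma~\ref{lem:cosetHzk} and Pink's description of $N(G)/G$ to force the coset index to be $\neq\pm1$, fills in a justification the paper only asserts.
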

\begin{proof}
Let $D_n(\alpha)=\{\beta_n\}$ for all $n\geq 1$. Assume that there is some $m\geq 0$ such that $d(\alpha)_n=1$ for all $n\geq m$.
By Lemma~\ref{lem:Dnsize1}, $\alpha$ is settled if and only if $\beta_m$ is settled. Hence it is enough to show that $\beta_m$ is settled which allows us to assume that $m=0$.

Assume $\alpha$ is in the coset $Gz_k$. Since $\alpha$ is a stable block, we may assume that $k \neq \pm 1$. Let $\nu:= v_2(\frac{k^2-1}{4})$ and let $n\geq \nu +1$. Then applying Lemma~\ref{111estimate} for $t=n-(\nu +1)$, we have 
\begin{equation} \begin{split}
  s_n(\alpha) &\geq s_{\nu +1}(\beta_{n-{\nu+1}})+ 2^{n}-2^{\nu +1}-(n-(\nu +1))2^{\nu+1}\\
              &\geq 2^{n}-n2^{\nu+1} + \nu 2^{\nu +1}.
\end{split}
\end{equation}

 Since $\nu \geq 1$, we have $$s_n(\alpha) \geq 2^{n}-n2^{\nu+1}.$$ Hence, for all $n\geq \nu +1$,
$$    \frac{s_n(\alpha)}{2^n} \geq \frac{2^{n}-n2^{\nu+1}}{2^n}.$$

As $n\to \infty$, the right hand side of the inequality goes to $1$, which means $\alpha$ is settled.
\end{proof}

\begin{lemma}\label{lem:beginwith1}
  Let $\alpha \in \Omega$. Assume $|D_n(\alpha)|=1$ for all $n\geq 1$. If $d(\alpha)_0=d(\alpha)_1=\ldots=d(\alpha)_t=-1$ for some $t\geq 0$, then
  $\alpha$ is settled if and only if $\beta_t$ is settled, where $\beta_t$ is the unique element in $D_t(\alpha)$.
\end{lemma}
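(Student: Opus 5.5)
We prove the statement by induction on $t$, using the fact that an element acting nontrivially on the first level has exactly one first descendant. With that fact, settledness passes down the chain of descendants with no loss. Throughout, $D_n(\alpha)=\{\beta_n\}$ for $n\ge 1$, and we set $\beta_0:=\alpha$. Then $d(\alpha)_i=\sgn_1(\beta_i)$ for all $i\ge 0$.

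\textbf{Key step.} Suppose $d(\alpha)_i=-1$ for some $i$ with $0\le i\le t-1$, that is, $\beta_i\restr{T_1}\neq\id$. Then $\beta_i=(\beta_{i,0},\beta_{i,1})\sigma$, so $\Desc_1(\beta_i)=\{\beta_{i,0}\beta_{i,1}\}$ consists of a single element. We need this element to be $\beta_{i+1}$. The argument has two parts.

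\begin{enumerate}
\item For $i\ge 1$: by Remark~\ref{Ugamma}, descendants of elements of $\mathscr{U}(\gamma)$ stay in $\mathscr{U}(\gamma)$. So $\beta_{i+1}\notin\mathscr{U}(\gamma)$ must be a first descendant of $\beta_i$, the unique element of $D_i(\alpha)$, as noted after Lemma~\ref{lem:Dnsize1}. Hence $\Desc_1(\beta_i)=\{\beta_{i+1}\}$.
\item For $i=0$: $\Desc_1(\alpha)$ is a single element, and it cannot lie in $\mathscr{U}(\gamma)$. Otherwise all its descendants would lie in $\mathscr{U}(\gamma)$, forcing $D_1(\alpha)=\emptyset$ and contradicting $|D_1(\alpha)|=1$. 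Hence $\Desc_1(\alpha)=\{\beta_1\}$.
\end{enumerate}

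By induction on $j\le t$, it follows that $\Desc_j(\alpha)=\{\beta_j\}$: each step passes through a $\beta_i$ with $i\le t-1$ and $\sgn_1(\beta_i)=-1$.

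\textbf{Conclusion.} Apply Theorem~\ref{thm:descsettled} with $n=t$. Since $\Desc_t(\alpha)=\{\beta_t\}$, condition (3) of that theorem says exactly that $\beta_t$ is settled. The theorem then gives the equivalence: $\alpha$ is settled if and only if $\beta_t$ is settled. The case $t=0$ is trivial, because $\beta_0=\alpha$.

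The hypothesis $d(\alpha)_t=-1$ is not needed for this argument; only $d(\alpha)_i=-1$ for $i<t$ is used. As an alternative, one could argue one step at a time: by Proposition~\ref{prop:conj}, each $\beta_i$ is conjugate to $(\beta_{i+1},\id)\sigma$, and then Corollary~\ref{cor:descsettled} together with Proposition~\ref{prop:conjsettled} gives $s_n(\beta_i)=2s_{n-1}(\beta_{i+1})$.

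The only delicate point is identifying the unique descendant with $\beta_{i+1}$, that is, ruling out that it lies in $\mathscr{U}(\gamma)$. This is exactly the argument given in the key step above.
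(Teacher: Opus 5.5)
Your proof is correct and follows the paper's route: show that $\Desc_m(\alpha)=\{\beta_m\}$ for all $m\le t$, then apply Theorem~\ref{thm:descsettled}. You also justify that the unique first descendant of each $\beta_i$ is $\beta_{i+1}$ and not an element of $\mathscr{U}(\gamma)$, which the paper only asserts, and your remark that the hypothesis $d(\alpha)_t=-1$ is not needed is also right.
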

\begin{proof}
If $d(\alpha)_0=d(\alpha)_1=\ldots=d(\alpha)_t=-1$ for some $t\geq 0$, then $\Desc_m(\alpha)$ has only one element; namely $\beta_m$ for each $0 \leq m\leq t$. The statement follows from Theorem~\ref{thm:descsettled}. 
\end{proof}

\begin{lemma}\label{lem:estimate}
    Let $\alpha \in \mathscr{G}$. Assume $|D_n(\alpha)|=1$ for all $n\geq 1$. Assume $d(\alpha)_n$ takes both $1$ and $-1$ values infinitely often.
    Then $\alpha$ is settled.
\end{lemma}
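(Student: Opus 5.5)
The plan is to bound the deficiency $e_n(\beta):=2^n-s_n(\beta)$ by unrolling the descendant chain $\beta_0:=\alpha,\beta_1,\beta_2,\dots$ only to a finite depth $t$. We then let $n\to\infty$ with $t$ fixed, and only afterwards let $t\to\infty$. Fully unrolling, as in Proposition~\ref{lem:111}, would fail: the error terms grow with the number of $-1$ steps.

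\textbf{Step 1: reduction and coset bookkeeping.} Since $d(\alpha)_n=1$ occurs, some $\beta_j$ acts trivially on level $1$. Its sibling in $\Desc_1(\beta_j)$ then lies in $\mathscr{U}(\gamma)$, hence in a coset $Gz_k$ with $k\neq\pm1$. By Lemma~\ref{lem:cosetHzk}, $\beta_j$ itself lies in that same coset, using the identification of cosets via Theorem~\ref{PinkNorm1} and Theorem~\ref{PinkNorm2}. By Lemma~\ref{lem:Dnsize1} we may replace $\alpha$ by this $\beta_j$, so we assume $\alpha\in Gz_k$ with $k\neq\pm1$, and we set $\nu=v_2(\frac{k^2-1}{4})$.

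By Lemma~\ref{lem:cosetHzk}, $\beta_i\in Gz_{k_i}$ where $k_i=k^{2^{m_i}}$ and $m_i$ is the number of indices $j<i$ with $d(\alpha)_j=-1$. Since $v_2(\frac{k^4-1}{4})=v_2(\frac{k^2-1}{4})+v_2(k^2+1)=v_2(\frac{k^2-1}{4})+1$, we get $\nu_i:=v_2(\frac{k_i^2-1}{4})=\nu+m_i$. Let $p_i=i-m_i$ be the number of $+1$ steps before $i$.

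\textbf{Step 2: one-step recursions.} By Proposition~\ref{prop:settledsections}, a $-1$ step gives $e_n(\beta_i)=2e_{n-1}(\beta_{i+1})$. At a $+1$ step the sibling of $\beta_{i+1}$ lies in $\mathscr{U}(\gamma)\cap Gz_{k_i}$, so Corollary~\ref{zksummary} gives
\[
e_n(\beta_i)\le e_{n-1}(\beta_{i+1})+2^{\nu_i+1}\qquad\text{whenever } n-1\ge \nu_i+1 .
\]
In normalized form, $r_n(\beta):=e_n(\beta)/2^n$ is unchanged by a $-1$ step. A $+1$ step halves it and adds at most $2^{\nu_i+1-n}$.

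\textbf{Step 3: unrolling to a fixed depth.} Fix $t$ and take $n\ge t+\nu+t+2$, so that every level condition $n-i-1\ge\nu_i+1$ (with $\nu_i\le\nu+t$) holds. Iterating Step 2 and bounding the last ratio $r_{n-t}(\beta_t)\le1$ gives
\[
r_n(\alpha)\le 2^{-p_t}+\sum_{\substack{i<t\\ d(\alpha)_i=1}}2^{-p_i}\,2^{\nu_i+1-(n-i)}
=2^{-p_t}+\sum_{\substack{i<t\\ d(\alpha)_i=1}}2^{\nu+1+2m_i-n}.
\]
The sum is finite with $t$ fixed, so it tends to $0$ as $n\to\infty$. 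Hence $\limsup_n r_n(\alpha)\le 2^{-p_t}$.

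Because $d(\alpha)_i=1$ infinitely often, $p_t\to\infty$, and therefore $s_n(\alpha)/2^n\to1$, i.e.\ $\alpha$ is settled. The infinitely many $-1$ values never enter the bound; they only force the order of limits above. In fact the same argument covers Proposition~\ref{lem:111}.

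\textbf{Main obstacle.} The delicate point is the bookkeeping of $\nu_i$. The valuation grows by one at each $-1$ step, so the error constants $2^{\nu_i+1}$ are not uniformly bounded. This is exactly why the depth $t$ must be fixed before sending $n\to\infty$. One must also check carefully that each $+1$ sibling genuinely lies in $\mathscr{U}(\gamma)\cap Gz_{k_i}$ with $k_i\neq\pm1$, so that Corollary~\ref{zksummary} applies with exponent $\nu_i$.
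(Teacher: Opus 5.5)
Your proof is correct and is essentially the paper's argument. Both unroll the descendant chain to a fixed finite depth, using Proposition~\ref{prop:settledsections} at the $-1$ steps and Corollary~\ref{zksummary} at the $+1$ steps, with the valuation growing by one per $-1$ step. Your terms $2^{\nu+1+2m_i-n}$, grouped by blocks of $1$'s, are exactly the paper's $r_i2^{1+\nu+2\sum_{j<i}m_j}/2^n$, where $r_i,m_j$ are the paper's block lengths. Both then let the depth go to infinity. Sending $n\to\infty$ first is a cleaner way to handle the paper's threshold $n\ge n_t$, and it shows that infinitely many $1$'s suffice, so Proposition~\ref{lem:111} is covered as well.

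One step needs an explicit hypothesis: the claim that ``the sibling lies in $\mathscr{U}(\gamma)$'', which is what gives $k\neq\pm1$ and the exponent $\nu_i$. It uses that the $\beta_j$ are blocks, i.e.\ that $\alpha$ is a stable block, which holds in the application to Theorem~\ref{thm:stablesettled}. Since $\Desc_1$ is a set, $|D_{j+1}(\alpha)|=1$ alone does not rule out $\beta_j=(\beta_{j+1},\beta_{j+1})$. The paper's proof relies on the same hypothesis implicitly through Lemma~\ref{111estimate}.
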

\begin{proof}

By Lemma~\ref{lem:beginwith1}, we may assume that $d(\alpha)_0=1$. We first define sequences of positive integer tuples $(r_t,m_t)_{t\geq 1}$ by letting $r_t$ be the length of the $t$th consecutive block of $1$'s in the sequence $(d(\alpha)_n)_{n\geq 0}$ and $m_t$ be the length of the $t$'th consecutive block of $-1$'s immediately following it. Let us call the unique element in $D_n(\alpha)$ by $\beta_n$ for all $n\geq 1$.

Assume $\alpha \in Gz_k$ and let $\nu=\frac{k^2-1}{4}$. For $t\geq 1$, set $n_t:=\nu +1 + 2(m_1+\ldots m_t) + (r_1+\ldots r_t)$. For $n \geq n_t$, Lemma~\ref{111estimate} gives the following estimate:
 \begin{equation}\label{eq:estimate}
         s_n(\alpha) \geq s_{n-r_1}(\beta_{r_1})+(2^n-2^{n-r_1}-r_12^{\nu +1}).
 \end{equation}

 Using Proposition~\ref{prop:settledsections}, we find
 
 \begin{equation}\label{eq:-1-1-1}
         s_{n-r_1}(\beta_{r_1}) \geq 2^{m_1}(s_{n-r_1-m_1}(\beta_{r_1+m_1}))
 \end{equation}

 Combining equations ~\ref{eq:estimate} and ~\ref{eq:-1-1-1}, we have
 \begin{equation} s_n(\alpha)
 \geq 2^{m_1}(s_{n-r_1-m_1}(\beta_{r_1+m_1}))+ (2^{n}-2^{n-r_1}-r_1 2^{\nu +1}).
 \end{equation}
By Lemma~\ref{lem:cosetHzk}, $\beta_{r_1}$ is in the coset $Gz_{k}$ and $\beta_{r_1+m_1}$ is in $Gz_{k^{2^{m_1}}}$. The valuation of $(k^{2^{m_1+1}}-1)/4$ is equal to $\nu+m_1$.

 Applying Lemma~\ref{111estimate} to $s_{n-r_1-m_1}(\beta_{r_1+m_1})$ for $t=r_2$, we have
 \begin{equation}
s_{n-r_1-m_1}(\beta_{r_1+m_1}) \geq s_{n-r_1-r_2-m_1}(\beta_{r_1+r_2+m_1}) + (2^{n-r_1-m_1}-2^{n-r_1-m_1-r_2}-r_2 2^{\nu+m_1+1}))          
 \end{equation}
   

Finally, combing these inequalities, we have 
\begin{equation}
    s_n(\alpha) \geq (2^{n-r_1}-2^{n-r_1-r_2}-r_2 2^{\nu+2 m_1+1}) +(2^{n}-2^{n-r_1}-r_1 2^{1+ \nu})=2^{n}-2^{n-r_1-r_2}-r_2 2^{\nu+2 m_1+1} -r_1 2^{1+ \nu}.
\end{equation}
Repeating this process $t$ times,
we find the following estimate

\begin{equation}
       s_{n}(\alpha)\geq 2^n-2^{n-\sum_{i=1}^{t}r_i}-(r_1 2^{\nu +1}+ \sum_{i=2}^t{r_i 2^{1+\nu +2\sum_{j=1}^{i-1}{m_j}}}).
\end{equation}
for any $n\geq n_t$.

Let $m(t)$ be the sum $m_1+\ldots m_t$ and $r(t)$ be $r_1+\ldots r_t$. Since $\sum_{j=1}^{i-1}{m_j} \leq m(t)$ for all $i=2,\ldots,t-1$, we have

 \begin{equation}
     s_n(\alpha)/2^n \geq 1-(\frac{2^{n-r(t)}+ r_1 2^{\nu +1} + \sum_{i=2}^{t}{r_i 2^{1+ \nu+2m(t)}}}{2^n}).
 \end{equation}
Now, 
\begin{equation}
    \frac{2^{n-r(t)}+\sum_{i=1}^t{r_i 2^{1+ \nu+2m(t)}}}{2^n}=\frac{1}{2^{r(t)}} + \frac{\sum_{i=1}^t{r_i 2^{1+ \nu+2m(t)}}}{2^n}
\end{equation}
Since $d(\alpha)_n$ takes the value $-1$ infinitely many times, $r(t) \to \infty$ as $t$ goes to infinity, it is enough to show the second part of the sum goes to zero as $t \to \infty$. Remember that $n\geq \nu + 2m(t)+r(t) +1$.
\begin{equation}
\begin{split}
    \frac{\sum_{i=1}^t{r_i 2^{1+ \nu+2m(t)}}}{2^n} &\leq \frac{\sum_{i=1}^t{r_i}}{2^{n-1-\nu-2m(t)}} \\
              &\leq  \frac{r(t)}{2^{r(t)}} \to 0 \text{ as } t \to \infty.
\end{split}
\end{equation}

\end{proof}

\begin{proof}[Proof of Theorem~\ref{thm:stablesettled}]
   Let $\alpha$ be a stable block in $\mathscr{G}$. Using Lemma~\ref{lem:Dempty}, we may assume that $|D_n(\alpha)|=1$ for all $n\geq 1$. Using  Lemma ~\ref{lem:-1-1-1} and Proposition~\ref{lem:111} we may assume $d(\alpha)_n$ takes the values $\pm 1$ both infinitely many times. Lemma~\ref{lem:estimate} shows in this case that $\alpha$ is settled.
   Hence, we showed that a stable block is settled.
\end{proof}

Using Theorem~\ref{thm:stablesettled}, one can demonstrate more explicit examples of settled elements in $\mathscr{G}$.

\begin{proposition}\label{prop:azk}
 Let $m\in \ZZ_2$ and $k \neq \pm 1 \in \ZZ_2^{\times}$. The element $a_i\gamma^mz_k$ is settled for any $i=1,\ldots,r$. In particular, $a_iz_k$ is settled for all $i=1,\ldots,r$.  
\end{proposition}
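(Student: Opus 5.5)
The plan is to show that $\alpha:=a_i\gamma^m z_k$ is a stable block lying in $\mathscr{G}$, and then invoke Theorem~\ref{thm:stablesettled}. Membership in $\mathscr{G}$ is immediate, since $a_i\in G$ and $\gamma^m z_k\in N(\gamma)$. It also lies in the coset $Gz_k$ with $k\neq\pm1$, which is the setting of Lemma~\ref{111estimate}. So the work is in verifying that every $n$th descendant of $\alpha$ is a block. I would do this by computing the first descendants explicitly and showing that, apart from descendants lying in $\mathscr{U}(\gamma)$ (handled by Remark~\ref{Ugamma}), the only descendant not in $\mathscr{U}(\gamma)$ again has the shape $a_j\gamma^{m'}z_{k'}$ with $k'\neq\pm1$. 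This gives a closed family of such elements.

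The computation splits according to $i$ and the parity of $m$. For $2\le i\le r$ we have $a_i=(a_{i-1},\id)$.
\begin{itemize}
\item If $m$ is even, then $\gamma^m z_k=(\gamma^{m/2}z_k,\gamma^{m/2+\ell}z_k)$, so
\[
\alpha=(a_{i-1}\gamma^{m/2}z_k,\ \gamma^{m/2+\ell}z_k).
\]
The second coordinate lies in $\mathscr{U}(\gamma)$, so $\alpha$ is a block, and its other first descendant is $a_{i-1}\gamma^{m/2}z_k$.
\item If $m$ is odd, then by Remark~\ref{Ugamma} $\gamma^m z_k=(\gamma^{(m+1)/2+\ell}z_k,\gamma^{(m-1)/2}z_k)\sigma$. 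Hence $\alpha$ acts nontrivially on level one, so it is automatically a block. Its unique first descendant is $a_{i-1}\gamma^{(m+1)/2+\ell}z_k\gamma^{(m-1)/2}z_k$. Using $z_k\gamma^{s}=\gamma^{ks}z_k$ and $z_k^2=z_{k^2}$, this equals $a_{i-1}\gamma^{m'}z_{k^2}$ for some $m'\in\ZZ_2$.
\end{itemize}

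For $i=1$, we have $a_1=(a_r,\id)\sigma$.
\begin{itemize}
\item If $m$ is even, then $\alpha=(a_r\gamma^{m/2+\ell}z_k,\ \gamma^{m/2}z_k)\sigma$, a block whose unique descendant is $a_r\gamma^{m/2+\ell}z_k\gamma^{m/2}z_k=a_r\gamma^{m''}z_{k^2}$.
\item If $m$ is odd, then $\alpha$ acts trivially on level one, and
\[
\alpha=(a_r\gamma^{(m-1)/2}z_k,\ \gamma^{(m+1)/2+\ell}z_k).
\]
The second coordinate lies in $\mathscr{U}(\gamma)$, so $\alpha$ is a block, with the other descendant of the form $a_r\gamma^{m'''}z_k$.
\end{itemize}

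In every case the descendants are either in $\mathscr{U}(\gamma)$ or of the form $a_j\gamma^{m'}z_{k'}$ with $k'\in\{k,k^2\}$. Since $k\neq\pm1$ forces $k^2\neq\pm1$ (in $\ZZ_2$, $k^2=1$ iff $k=\pm1$, and $k^2\ne -1$), we always have $k'\neq\pm1$. An induction on $n$ then shows that all $n$th descendants are blocks, because descendants of elements of $\mathscr{U}(\gamma)$ stay in $\mathscr{U}(\gamma)$ and are themselves blocks by Remark~\ref{Ugamma}. So $\alpha$ is a stable block in $\mathscr{G}$, and Theorem~\ref{thm:stablesettled} gives that it is settled. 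The special case $m=0$ gives that $a_iz_k$ is settled.

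The main obstacle is the bookkeeping in the exponents. The key facts needed are the commutation relation $z_k\gamma^s z_k^{-1}=\gamma^{ks}$ and $z_k^2=z_{k^2}$ (Remark~\ref{rk:odosettled}), used to rewrite products like $\gamma^{a}z_k\gamma^{b}z_k$ as $\gamma^{a+kb}z_{k^2}$. I would also double-check the section formula for $\gamma^m$ with $m$ odd against the convention in Remark~\ref{Ugamma}. The exact exponents are irrelevant, however: only the shape $a_j\gamma^{\ast}z_{k'}$ and the condition $k'\neq\pm1$ matter.
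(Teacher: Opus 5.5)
Your proposal is correct and follows the paper's proof. In both, one computes the first descendants of $a_i\gamma^m z_k$ in the four cases ($i=1$ or $i>1$, $m$ even or odd), observes that each is either in $\mathscr{U}(\gamma)$ or of the form $a_j\gamma^{m'}z_{k'}$ with $k'\in\{k,k^2\}$ and $k'\neq\pm1$, concludes that the element is a stable block in $\mathscr{G}$, and applies Theorem~\ref{thm:stablesettled}. Your explicit check that $k^2\neq\pm1$ and your exponent $\frac{m+1}{2}+\ell+k\frac{m-1}{2}$ in the case $i>1$, $m$ odd (the paper's display has $r$ in place of $m$) are slightly more careful than the printed argument.
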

\begin{proof}
    We calculate for $m\in \ZZ_2$ and $k \in \ZZ_2^{\times}$ that,
    \begin{equation} \label{i=1}
    \Desc_1(a_1\gamma^mz_k) =\begin{cases}
      \{a_r\gamma^{\frac{m-1}{2}}z_k,  \gamma^{\frac{m+1}{2}+\ell}z_k \} & \text{ if } v_2(m)=0 \\
      \{a_r\gamma^{\frac{(1+k)m}{2}+\ell}z_{k^2}\} & \text{ otherwise }
    \end{cases}
    \end{equation}
   and for $1 <i\leq r$,
   \begin{equation} \label{i>1}
    \Desc_1(a_i\gamma^mz_k) =\begin{cases}
      \{ a_{i-1}\gamma^{\frac{m}{2}}z_k,  \gamma^{\frac{m}{2}+\ell}z_k\} & \text{ if } v_2(m)>0 \\
      
      \{ a_{i-1}\gamma^{\frac{(1+r)}{2}+k\frac{(r-1)
      
      }{2}+\ell}z_{k^2}\} & \text{ otherwise}
    \end{cases}
    \end{equation}
Hence the element $a_i\gamma^mz_k$ is a block for all $1\leq i\leq r$. The equation \ref{i=1} and \ref{i>1} tells us that a first descendant of $a_i\gamma^mz_k$ is either in $\mathscr{U}(\gamma)$ or it is $a_{i-1}\gamma^rz_{k'}$ for some $k'\neq \pm 1$ and $r\in \ZZ_2$. Since the elements of $\mathscr{U}(\gamma)$ are stable blocks, we only need to consider the element $a_{i-1}\gamma^rz_{k'}$. We already discussed that this element is a block. Since an $n$th descendant of an element $\alpha$ is a first descendant of an $(n-1)$st descendant of $\alpha$, it follows that $a_i\gamma^mz_k$ is a stable block for all $1\leq i \leq r$, $m\in \ZZ_2$ and $k\neq \pm 1 \in \ZZ_2^{\times}$. Hence, it is settled by Theorem~\ref{thm:stablesettled}.  
\end{proof}

\section{$\mathscr{G}$ is densely settled}

In this final section, we prove that $\mathscr{G}$ is densely settled.
Our strategy is to identify an explicit dense subset of $\mathscr{G}$ and then
show that every element of this subset is settled.
As discussed in Remark~\ref{rem:Gar}, this immediately implies that the group $\ga$ is also densely settled when $K$ is finitely generated over its prime field.

\begin{proposition}\label{prop:denseset}
The set
\begin{equation}\label{denseset}
\mathcal{A}
:=
\left\{
\alpha z_k \;\middle|\;
\alpha \text{ is a finite word in } \{a_1,\ldots,a_r\},\;
k>1 \text{ is an odd integer}
\right\}
\end{equation}
is dense in $\mathscr{G}$.
\end{proposition}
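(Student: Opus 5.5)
We need to show that every element of $\mathscr{G}$ can be approximated, on each finite level $T_n$, by an element $\alpha z_k$ with $\alpha$ a finite word in $a_1,\ldots,a_r$ and $k>1$ an odd integer. Recall that $\mathscr{G}=GN(\gamma)$ and that, by Proposition~\ref{normalizer of gamma}, $N(\gamma)=\{\gamma^m z_k\}$. Since $\gamma\in G$, every element of $\mathscr{G}$ can be written as $g z_k$ with $g\in G$ and $k\in\ZZ_2^{\times}$. So fix $\beta=gz_k\in\mathscr{G}$ and $n\geq 1$; the task is to find $\alpha z_{k'}\in\mathcal{A}$ with $\alpha z_{k'}\restr{T_n}=\beta\restr{T_n}$.

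\textbf{Step 1: approximate the $z_k$ factor.} Choose a positive odd integer $k'>1$ with $v_2(k-k')\geq n$. For example, take $k'\equiv k \pmod{2^n}$ in the range $1\le k'<2^n$, and add $2^n$ if this gives $k'=1$. By Lemma~\ref{lem:zkcongruence}, $z_{k'}\restr{T_n}=z_k\restr{T_n}$.

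\textbf{Step 2: approximate the $G$ factor.} By definition $G$ is the closure of the abstract group $\langle a_1,\ldots,a_r\rangle$. Hence there is an element $w$ of this abstract group with $w\restr{T_n}=g\restr{T_n}$. A priori $w$ is a word in $a_i^{\pm1}$. To remove the inverses, use that $\Omega_n$ is finite: each $a_i\restr{T_n}$ has finite order $2^{e}$, so $a_i^{-1}\restr{T_n}=a_i^{2^{e}-1}\restr{T_n}$. Replacing each $a_i^{-1}$ by $a_i^{2^e-1}$ produces a positive word $\alpha$ with $\alpha\restr{T_n}=g\restr{T_n}$. This step matters only if ``finite word'' is read as a positive word; if inverses are allowed, it is unnecessary.

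\textbf{Step 3: combine.} Since $\pi_n$ is a homomorphism,
\[
(\alpha z_{k'})\restr{T_n}=\alpha\restr{T_n}\,z_{k'}\restr{T_n}=g\restr{T_n}\,z_k\restr{T_n}=\beta\restr{T_n}.
\]
Thus $\beta$ lies in the closure of $\mathcal{A}$. We also need $\mathcal{A}\subseteq\mathscr{G}$, which holds because $a_i\in G$ and $z_{k'}\in N(\gamma)$. This gives density.

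The argument is essentially routine. The only points needing care are, first, the reduction of an arbitrary element of $\mathscr{G}=GN(\gamma)$ to the form $gz_k$, which uses $\gamma^m\in G$ (by the proposition that $\gamma\in G$ and the closedness of $G$). Second, the requirement $k'>1$ is met by shifting the residue by $2^n$, and excluding $k'=1$ costs nothing since only the residue class modulo $2^n$ matters.
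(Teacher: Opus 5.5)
Your proposal is correct and follows the paper's proof: write $\beta=gz_k$, replace $z_k$ on $T_n$ by $z_{k'}$ for an odd integer $k'>1$ with $v_2(k-k')\geq n$ via Lemma~\ref{lem:zkcongruence}, and approximate $g$ on $T_n$ by a finite word in the $a_i$. The differences are cosmetic. You choose $k'$ uniformly, which Lemma~\ref{lem:zkcongruence} alone justifies. The paper instead splits into the case $n>v_2(k-1)$, where it truncates the $2$-adic expansion of $k$, and the case $n\leq v_2(k-1)$, where it takes $k_n=1+2^n$ and uses Lemma~\ref{kidentitylevel}. Your step eliminating inverses from the word is a harmless extra detail.
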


\begin{proof}
Let $\beta\in \mathscr{G}$. We will show that for any $n\geq 1$, there is an element of $\mathcal{A}$ that agrees with $\beta$ when restricted to $T_n$.

By definition of $\mathscr{G}$, $\beta$ can be written in the form $\beta=g z_k$ for some
$g\in G$ and $k\in\mathbb Z_2^\times$. 

Assume that $n > v_2(k-1)$. Writing $k=\sum_{i=0}^{\infty}c_i2^i$ for some $0\leq c_i \leq 1$, we let $k_n:=\sum_{i=0}^{n-1}c_i2^i$. Notice that $v_2(k-k_n) \geq n$.
Since $n > v_2(k-1)$, we have that $c_{i}=1$ for some $1\leq i \leq n-1$ and hence the integer $k_n$ is greater than $1$. Hence we have
\[
z_k\restr{T_n}=z_{k_n}\restr{T_n}.
\]
by Lemma~\ref{lem:zkcongruence}.
It follows that
\[
\beta\restr{T_n} = g z_{k_n}\restr{T_n},
\]
For $n \leq v_2(k-1)$, one can take $k_n=1+2^n$. We adopt the convention of $v_2(0)=\infty$, so this covers $k=1$ as well.
By Lemma~\ref{kidentitylevel}, we have $z_k \restr{T_n}=z_{k_n}\restr{T_n}=\id$.

Hence, we showed for any $n\geq 1$ that $\beta \restr{T_n}=gz_{k_n}$ where $k_n$ is an odd integer.

Since $G$ is topologically generated by $a_1,\ldots,a_r$ and $G_n$ is finite, for each $n\geq 1$, there is an element $g_n \in G$ such that
 $g\restr{T_n}=g_n \restr{T_n}$ and $g_n$ is a finite word on $a_1,\ldots,a_r$, i.e. $g_n \in \langle a_1,\ldots,a_r\rangle$.
This proves that the set $\mathcal{A}$ is dense in $\mathscr{G}$.
\end{proof}

However, the descendants of an element of $\mathcal{A}$ do not necessarily lie in $\mathcal{A}$.
For example, the first descendant of $a_1^3 z_k$ is $a_2^2 \gamma^{\ell} z_k a_2 z_k$, which lies in the coset $G z_k^2$.
There is no guarantee that this element belongs to $\mathcal{A}$.
For this reason, we introduce a larger set that is stable under taking descendants and is suitable for an induction on a length function defined below.

Fix $k\in\mathbb Z_2^\times$ with $k\neq \pm 1$, and consider the coset $G z_k\subseteq \mathscr{G}$.
To capture all elements appearing in the dense set $\mathcal{A}$ defined in~\eqref{denseset}, we define
\[
\mathfrak{V}_k
:=
\left\{
\beta\in G z_k \;\middle|\;
\beta \text{ is represented by a finite word in the alphabet}
\{a_1,\dots,a_r,\gamma^m,z_s : s \neq \pm 1 \in\mathbb Z_2^\times, m\in \ZZ_2\}
\right\}.
\]
We then set
\[
\mathfrak{V} := \bigcup_{k\neq\pm 1} \mathfrak{V}_k.
\]

The set $\mathfrak{V}$ clearly contains the dense subset $\mathcal{A}$, but it may also contain additional elements.
For instance, an element of the form $z_k a_1$ lies in $\mathfrak{V}$, but need not be expressible as $a_i^m z_k$ for any integer $m$.
This reflects the fact that conjugation of the generators $a_i$ by $z_k$ is not explicitly understood.

Finally, we observe that if $\beta\in \mathfrak{V}$, then every descendant of $\beta$ also lies in $\mathfrak{V}$.

\subsection*{The $a$-length}

Let $\alpha$ be a finite word in the alphabet 
$\{a_1,\dots,a_r,\gamma^m,z_k : k \neq \pm 1 \in \ZZ_2^{\times}, m\in \ZZ_2\}$.
Let $|\alpha|_{a_1,\dots,a_r}$ denote the number of occurrences of letters
$a_1,\dots,a_r$ in $\alpha$.

For $\beta\in \mathfrak{V}$, define the \emph{$a$-length} of $\beta$ by
\[
|\beta|_a := 
\min\{\;|\rho|_{a_1,\dots,a_r} :
\rho \text{ is a finite word representing }\beta \;\}.
\]

\begin{lemma}\label{ai not in N(gamma)}
    The element $a_i$ is not in $N(\gamma)$ for any $i=1,\ldots,r$.
\end{lemma}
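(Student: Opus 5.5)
Since $N(\gamma)=\{\gamma^m z_k \mid m\in\ZZ_2,\ k\in\ZZ_2^{\times}\}$ by Proposition~\ref{normalizer of gamma}, it suffices to show that no $a_i$ has the form $\gamma^m z_k$. The quickest route is an invariant that all elements of $N(\gamma)$ share but $a_i$ lacks. A natural candidate is how many vertices an element moves at low levels, or more robustly whether it is an odometer or has the rigid conjugacy types classified in Section~4. I plan a direct comparison of sections instead.

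\textbf{The case $i=1$.} Here $a_1$ acts as $\sigma$ on level $1$, so if $a_1=\gamma^m z_k$ then $m$ must be odd, because $z_k\restr{T_1}=\id$. By Proposition~\ref{prop:conj} and Remark~\ref{Ugamma}, the unique first descendant of $\gamma^m z_k$ is then $\gamma^{m+\ell}z_{k^2}$. By Proposition~\ref{prop:conj}(2) applied to $a_1=(a_r,\id)\sigma$, the unique first descendant of $a_1$ is $a_r$. These descendants must be conjugate, since the element is conjugate to $(\text{descendant},\id)\sigma$ and conjugacy of such forms reduces to conjugacy of the descendants. So it suffices to rule out that $a_r$ is conjugate to an element of $N(\gamma)$.

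\textbf{The case $i\ge 2$.} Here $a_i=(a_{i-1},\id)$ fixes level $1$, so $m$ must be even. Then $\gamma^m z_k=(\gamma^{m/2}z_k,\gamma^{m/2+\ell}z_k)$, and comparing sections gives $\id=\gamma^{m/2+\ell}z_k$. Hence $z_k=\gamma^{-m/2-\ell}$ lies in the centralizer $\langle\langle\gamma\rangle\rangle$. Since $z_k\gamma z_k^{-1}=\gamma^k$, this forces $k=1$, so $z_1=(z_1,z_1)=\id$ and $m/2+\ell=m/2=0$. Then $\gamma^m z_k=\id$, whereas $a_i\neq\id$ because $a_1$ acts nontrivially on level $1$ and each $a_i$ has $a_{i-1}$ as a section. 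This contradiction settles every $i\ge 2$, and in particular gives $a_r\notin N(\gamma)$ when $r\ge 2$.

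\textbf{The main obstacle: $a_r$ is not even conjugate to an element of $N(\gamma)$.} For the case $i=1$ we need more than $a_r\notin N(\gamma)$. First apply the $i\ge 2$ reasoning again: $a_r$ is conjugate to some $\gamma^{m'}z_{k'}$ with $m'$ even. Conjugacy for level-fixing elements (Proposition~\ref{prop:conj}(1)) then makes the identity section of $a_r$ conjugate to one of $\gamma^{m'/2}z_{k'}$ or $\gamma^{m'/2+\ell'}z_{k'}$, so one of these equals $\id$. This forces $z_{k'}\in\langle\langle\gamma\rangle\rangle$, hence $k'=1$, hence the element is $\gamma^{m'}$ with some half-exponent zero, and so $m'=0$. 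Thus $a_r$ would be conjugate to $\id$, which is impossible.

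A simpler alternative that avoids all of this is to use that $N(\gamma)$ lies in $\mathscr{G}$ while $a_1\in G$. If $a_1=\gamma^m z_k$, then $z_k\in G\gamma^{-m}=G$, because $\gamma\in G$. Theorem~\ref{PinkNorm2}(1) then forces $k=1$, so $z_1=\id$ and $a_1=\gamma^m$. Every power of $\gamma$ either fixes level $1$ or is an odometer, but $a_1$ is neither, since $\sgn_2(a_1)=\sgn_1(a_r)=1$. This contradiction handles $i=1$, and the same argument covers all $i$ uniformly: $a_i=\gamma^m$ is impossible because a power of $\gamma$ fixing level $1$ has both sections equal to $\gamma^{m/2}$, and this contradicts the identity section of $a_i$ unless $a_i=\id$. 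I would present this uniform argument as the main proof.
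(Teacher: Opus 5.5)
Your proof is correct, and it takes a different route from the paper.

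\textbf{The paper's route.} The paper argues through settledness. An element $\gamma^m z_k\in N(\gamma)$ with $k=-1$ has order $2$, since $(\gamma^m z_{-1})^2=\gamma^m\gamma^{-m}z_1=\id$. This case is excluded because $a_i$ has infinite order, which the paper takes from Pink's Lemma~2.2.5. In all other cases $\gamma^m z_k$ is either settled or the identity: Corollary~\ref{zksummary} covers $k\neq\pm1$, and the odometer example covers $k=1$, $m\neq 0$. By contrast, $a_i$ is not settled, because it acts trivially on a level-one subtree. For $i=1$ this goes through $a_1=(a_r,\id)\sigma$ and Corollary~\ref{cor:descsettled}.

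\textbf{Your uniform argument.} You use $\gamma\in G$ and Theorem~\ref{PinkNorm2}(1). If $a_i=\gamma^m z_k$, then $z_k\in G$, which forces $k=1$ and $a_i=\gamma^m$. A level-one and sign check then rules this out. This is short, but it rests on the deep identification $N(G)/G\cong(\ZZ_2^\times)^r$. The paper's route instead reuses Section~4 and needs only the infinite order of $a_i$ from Pink.

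\textbf{Your section-comparison argument.} This is the most economical of the three. It uses only the wreath recursion, the fact that the centralizer of $\gamma$ is $\langle\langle\gamma\rangle\rangle$ (proved in Proposition~\ref{normalizer of gamma}), and $a_i\neq\id$. It needs no input from Pink and no settledness theory.

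\textbf{Two small remarks on that argument.} First, the detour through conjugacy for $i=1$ is unnecessary. The decomposition $(\alpha_0,\alpha_1)\tau$ is unique, so $a_1=\gamma^m z_k$ forces literal equality of sections. In particular $\id=\gamma^{(m-1)/2}z_k$, which gives $k=1$, $m=1$, and hence $a_r=\gamma$. This is impossible because $a_r$ fixes level $1$. Equivalently, the descendant $a_r$ would literally lie in $N(\gamma)$, contradicting your case $i=r$.

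Second, for odd $m$ the first descendant of $\gamma^m z_k$ is $\gamma^{m(k+1)/2}z_{k^2}$, not $\gamma^{m+\ell}z_{k^2}$. The paper's Remark~\ref{Ugamma} contains the same slip. It is harmless here, since you only use that this descendant lies in $N(\gamma)$.
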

\begin{proof}
    By Proposition~\ref{normalizer of gamma}, each element of $N(\gamma)$ is of the form $\gamma^mz_k$ for some $k \in \ZZ_2^{\times}$ and $m \in \ZZ_2$. If $k=-1$, then $\gamma^m z_{-1}$ has finite order for all $m\in \ZZ_2$ since 
    $\gamma^m z_{-1} \gamma^mz_{-1}=\gamma^m\gamma^{-m}=\id$. By \cite[Lemma~2.2.5]{Pinkpolyn} $a_i$ has an infinite order for all $i=1,\ldots,r$, hence if $a_i$ is  in $N(\gamma)$, then it must be of the form $\gamma^m z_k$ where $k\neq -1$.
    
    By Corollary~\ref{zksummary}, if $k\neq \pm 1$, $\gamma^mz_k$ is settled for any $m\in \ZZ_2$. If $k=1$ and $m\neq 0$, then  $\gamma^mz_k=\gamma^m$ which is also settled. Therefore, if $a_i \in N(\gamma)$, then it must be settled. We observe that $a_i$ for $i>1$ is not settled since the number of vertices that are in a stable cycle is at most $1/2$ of the vertices on that level as they act as identity on one of the subtrees at level one. Finally, since $a_1=(a_r,\id)\sigma$, it is settled if and only if $a_r$ is settled by Corollary~\ref{cor:descsettled}. Hence $a_i$ cannot be in $N(\gamma)$ for any $i=1,\ldots,r$.
\end{proof}

Elements of $N(\gamma)$ have $a$-length $0$, and each generator $a_i$ has $a$-length $1$ since $a_i$ is not in $N(\gamma)$ for any $1\leq i\leq r$ by Lemma~\ref{ai not in N(gamma)}. Let $\beta=(\beta_0,\beta_1)\tau$. By the recursion relations~\ref{generators}, each letter $a_i$ in a minimal word for $\beta$
contributes a single letter $a_{i-1}$ to the word representing either $\beta_0$ or $\beta_1$. Therefore, if $\beta=(\beta_0,\beta_1)\tau \in \mathfrak{V}$, then
\[
|\beta_0|_a + |\beta_1|_a \;\le\; |\beta|_a.
\]
Inductively, this argument shows that if $\beta'$ is a descendant of $\beta \in \mathfrak{V}$, then $|\beta'|_a \leq |\beta|_a$.

\begin{proposition}\label{prop:VkSettled}
Every element of $\mathfrak{V}$ is settled.
\end{proposition}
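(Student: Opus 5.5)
We argue by induction on the $a$-length $|\beta|_a$ of $\beta\in\mathfrak{V}$, as announced after Theorem~\ref{intro:thm3}. For the base case $|\beta|_a=0$: $\beta$ is represented by a word in $\gamma^m$ and $z_s$ only, so $\beta\in N(\gamma)$. Since $\beta\in Gz_k$ with $k\neq\pm1$, the image of $\beta$ in $N(G)/G\cong(\ZZ_2^\times)^r$ is $(k,\dots,k)$ by Theorem~\ref{PinkNorm2}. Writing $\beta=\gamma^m z_{k'}$ by Proposition~\ref{normalizer of gamma}, the same theorem gives $k'=k$ (as $\gamma\in G$). Hence $\beta\in\mathscr{U}(\gamma)$, and $\beta$ is settled by Corollary~\ref{zksummary}.

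For the inductive step, let $|\beta|_a=L\geq 1$ and assume every element of $\mathfrak{V}$ of $a$-length $<L$ is settled. Every descendant of $\beta$ lies in $\mathfrak{V}$ and has $a$-length at most $L$; this uses Lemma~\ref{lem:cosetHzk} to see that descendants stay in cosets $Gz_{k'}$ with $k'\in\{k,k^2,k^4,\dots\}$, none equal to $\pm1$. We split into cases according to the sequence of descendants.
\begin{enumerate}
\item Suppose some $n$th level contains a descendant split, i.e.\ a descendant $\delta=(\delta_0,\delta_1)$ with $|\delta_0|_a,|\delta_1|_a<L$. Then both $\delta_0,\delta_1$ are settled by induction, and every other $n$th descendant of $\beta$ either has $a$-length $<L$ (so is settled) or we continue along it.
\item Since $|\delta_0|_a+|\delta_1|_a\le|\delta|_a$, at each level at most one descendant can still have $a$-length exactly $L$. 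The others have strictly smaller $a$-length and are settled by induction.
\end{enumerate}
So either at some level all descendants have $a$-length $<L$, in which case $\beta$ is settled by Theorem~\ref{thm:descsettled}, or there is a unique chain $\beta_n$ of descendants with $|\beta_n|_a=L$ for all $n$.

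The chain case is where the difficulty lies. Theorem~\ref{thm:stablesettled} cannot be quoted directly, because the sibling descendants are only settled, not uniformly settled, and not in $\mathscr{U}(\gamma)$. The plan is to rerun the proofs of Lemma~\ref{lem:-1-1-1}, Lemma~\ref{111estimate}, Proposition~\ref{lem:111} and Lemma~\ref{lem:estimate} with $\mathscr{U}(\gamma)$ replaced by the settled siblings. The tail where $d_n=-1$ eventually is handled exactly as in Lemma~\ref{lem:-1-1-1}, since that argument only uses signs: by Proposition~\ref{prop:odosgn} the element $\beta_m$ is an odometer.

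The remaining cases need a quantitative bound on the number of non-stable vertices of the siblings. To get it, I would first strengthen the inductive hypothesis: every element of $\mathfrak{V}_k$ of $a$-length $<L$ has at most $C(L,k)\,n\,2^{c(k)}$ non-stable vertices at level $n$, or some similar bound that is $o(2^n)$ uniformly in the coset. The estimates in Proposition~\ref{lem:111} and Lemma~\ref{lem:estimate} were polynomial in $n$ times $2^{\nu}$, which suggests such a bound propagates. Getting the bookkeeping of this uniform bound right along the chain, with the valuation $\nu$ growing by $m_t$ across each block of $-1$'s as in Lemma~\ref{lem:estimate}, is the main obstacle.

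An alternative I would try first is to show that $a$-length drops somewhere along any infinite chain unless the chain eventually becomes a stable block. Concretely, if $|\beta_n|_a=L$ forever, the single $a$-letters are all routed into one coordinate, so the other coordinate is $a$-free, hence lies in $\mathscr{U}(\gamma)$ by the base case. Then some $\beta_N$ is a stable block and Theorem~\ref{thm:stablesettled} applies directly. Combined with Lemma~\ref{lem:Dnsize1}, this would finish the proof.
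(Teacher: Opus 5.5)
Your final paragraph is a complete proof, and it is essentially the paper's argument. The paper argues that if $\beta$ is a stable block then Theorem~\ref{thm:stablesettled} applies. Otherwise some descendant $(\alpha_0,\alpha_1)$ has both coordinates outside $\mathscr{U}(\gamma)$, so both have positive and hence strictly smaller $a$-length, and induction together with Theorem~\ref{thm:descsettled} finishes. Your dichotomy is the same argument organized from the other side: either at some level \emph{all} descendants have $a$-length $<L$, or there is a persistent chain $\beta_n$ of $a$-length $L$. It has the small merit of making explicit that every $n$th descendant, not just the split one, has smaller $a$-length, a point the paper's last sentence leaves implicit.

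The paragraph before it is wrong and should be deleted. In the chain case it is not true that the siblings are ``only settled, not in $\mathscr{U}(\gamma)$.'' Iterating your own inequality $|\delta_0|_a+|\delta_1|_a\le|\delta|_a$, the $a$-lengths of all $n$th descendants sum to at most $L$. So once $|\beta_n|_a=L$, every other $n$th descendant has $a$-length $0$. Such a descendant lies in $\mathfrak{V}$, since its coset is $Gz_{k^{2^j}}$ with $k^{2^j}\neq\pm1$. By your base case it is therefore $\gamma^m z_{k'}$ with $k'\neq\pm1$, i.e.\ it lies in $\mathscr{U}(\gamma)$. Consequently every $\beta_n$ is a block, including $\beta$ itself and not merely some $\beta_N$. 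Hence $\beta$ is a stable block in $\mathscr{G}$, and Theorem~\ref{thm:stablesettled} applies directly.

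The proposed strengthening of the inductive hypothesis to a uniform $o(2^n)$ bound, and the rerun of Lemmas~\ref{111estimate}--\ref{lem:estimate} with settled siblings, are therefore unnecessary. You also never carried them out, so a proof resting on that plan would have a genuine gap exactly at the step you called the main obstacle. Lead with the final paragraph and drop the rest.
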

\begin{proof}
Let $\beta\in \mathfrak{V}$, and proceed by induction on $|\beta|_a$.

If $|\beta|_a=0$, then it is in $N(\gamma)$ and since $k\neq \pm 1$, it is uniformly settled. If $|\beta|_a=1$, then either 
$\beta=a_i\gamma^m z_k$ for some $i$,
in which case Proposition~\ref{prop:azk} implies $\beta$ is settled,
or $\beta$ is conjugate to such an element, and hence settled as well. The second part follows from the the definition of $z_k$ and the fact that $\alpha_0\alpha_1=\alpha_0(\alpha_1\alpha_0)\alpha_0^{-1}$ in any group.

Assume inductively that every $\alpha\in \mathfrak{V}$
with $|\alpha|_a < |\beta|_a$ is settled.
If $\beta$ is a stable block, then it is settled by Theorem~\ref{thm:stablesettled}.

Otherwise, either $\beta$ is not a block or $\beta$ has a descendant that is not a block. The proof is similar in both cases.
Assume  for some $n\geq 1$, there exists an $n$th descendant
\[
\alpha=(\alpha_0,\alpha_1)
\]
of $\beta$ such that neither $\alpha_0$ nor $\alpha_1$ lies in $\mathscr{U}(\gamma)$.
Therefore, $|\alpha_0|_a>0$ and $|\alpha_1|_a>0$, and 
the inequality 
$$|\alpha_0|_a + |\alpha_1|_a \le |\alpha|_a \le |\beta|_a $$
implies that both $|\alpha_0|_a$ and $|\alpha_1|_a$
are strictly smaller than $|\beta|_a$.

By the induction hypothesis, both $\alpha_0$ and $\alpha_1$ are settled.
Hence $\alpha$ is settled, and Theorem~\ref{thm:descsettled} then implies that $\beta$ is settled.
\end{proof}

Proposition~\ref{prop:VkSettled} shows that every element of the set $\mathcal{A}$ is settled. Thus, by Proposition~\ref{prop:denseset},
$\mathscr{G}$ is densely settled.

\begin{corollary}\label{cor:final}
    Let $K$ be a field of characteristic not equal to $2$. Assume that $K$ is finitely generated over its prime field. Let $f(x)$ be a quadratic polynomial defined over $K$. Assume that the postcritical orbit of $f$ is periodic. Then the arithmetic iterated monodromy group $\ga$ of $f$ is densely settled.
\end{corollary}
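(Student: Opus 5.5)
The plan is to combine the density result for $\mathscr{G}$ with the finite-index statement of Proposition~\ref{Gar has finite index} and the observation in Remark~\ref{rem:Gar}. First I reduce to the model group. The geometric IMG of $f$ is conjugate in $\Omega$ to $G$ by \cite[Theorem~2.4.1]{Pinkpolyn}, with $r$ the length of the periodic postcritical orbit. Since $\ga$ normalizes $\gm$, the same conjugating element carries $\ga$ into $N(G)$. By Proposition~\ref{prop:conjsettled}, conjugation preserves settledness and it is a homeomorphism of $\Omega$, so dense settledness is a conjugation-invariant property. Hence I may assume $\gm = G$ and $\ga \le N(G)$, and then Proposition~\ref{Gar has finite index} gives $\ga \le \mathscr{G}$.

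Second, I establish that $\mathscr{G}$ is densely settled. Proposition~\ref{prop:denseset} gives that $\mathcal{A}$ is dense in $\mathscr{G}$. Every element of $\mathcal{A}$ has the form $\alpha z_k$ with $k>1$ an odd integer, so $k\neq\pm1$, and $\alpha$ a word in the $a_i$. Such an element is therefore a finite word in the alphabet defining $\mathfrak{V}$ and lies in $Gz_k$, so $\mathcal{A}\subseteq\mathfrak{V}$. Proposition~\ref{prop:VkSettled} then shows that every element of $\mathcal{A}$ is settled.

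Third, I pass to $\ga$. Because $K$ is finitely generated over its prime field, Proposition~\ref{Gar has finite index}(2) shows that $\ga$ has finite index in $\mathscr{G}$. Since $\mathscr{G}$ is a closed subgroup of the profinite group $\Omega$ (the product of the compact sets $G$ and $N(\gamma)$ is compact), a finite-index closed subgroup $\ga$ is open in $\mathscr{G}$. I then argue as in Remark~\ref{rem:Gar}: given $\beta\in\ga$ and $n$, the set $\beta(U_n\cap\ga)$ is a nonempty open subset of $\mathscr{G}$. It therefore contains a settled element of $\mathcal{A}$, and that element lies in $\ga$ and agrees with $\beta$ on $T_n$. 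So $\ga$ is densely settled.

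The main obstacle is the first step. One must check that Pink's identification of the arithmetic group, via Theorem~\ref{PinkNorm2}, applies after the same conjugation that identifies $\gm$ with $G$. One must also check that the hypothesis of characteristic $\ne 2$ is enough for Pink's results to hold over a general $K$ rather than a number field. I would handle this by invoking Pink's theorems exactly as stated for such $K$. The openness of $\ga$ needs only that $\ga$ is closed, which holds because it is the continuous image of a profinite group.
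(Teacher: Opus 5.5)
Your proposal is correct and follows the paper's own argument: $\mathcal{A}\subseteq\mathfrak{V}$ is dense in $\mathscr{G}$ and consists of settled elements by Propositions~\ref{prop:denseset} and~\ref{prop:VkSettled}, and density then passes to the finite-index subgroup $\ga$ via Proposition~\ref{Gar has finite index} and Remark~\ref{rem:Gar}. Two points the paper leaves implicit are made precise in your version: the conjugation of $\gm$ onto $G$ (which carries $\ga$ into $N(G)$), and the fact that $\ga$ is open in $\mathscr{G}$ because it is a closed subgroup of finite index.
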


\bibliographystyle{amsplain}
\bibliography{refsettledarboreal}

\end{document}